\newtheorem{prop}{Proposition}[section]
\newtheorem{assumption}{Assumption}[section]
\def\argmin{\mathop{\rm arg\, min}}
\def\P{{\mathcal P}}
\begin{document}

\sloppy
\title{Learning with Linear Function Approximations in Mean-Field Control
}
\author{ \name Erhan Bayraktar \email erhan@umich.edu \\
			\addr  Department of Mathematics,
     University of Michigan, Ann Arbor, MI, USA
     \AND 
     \name Ali Devran Kara \email akara@fsu.edu \\
     \addr Department of Mathematics, Florida State University, FL, USA}

\editor{Quanquan Gu}
\maketitle

\begin{abstract}
The paper focuses on mean-field type multi-agent control problems with finite state and action spaces where the dynamics and cost structures are symmetric and homogeneous, and are affected by the distribution of the agents. A standard solution method for these problems is to consider the infinite population limit as an approximation and use symmetric solutions of the limit problem to achieve near optimality. The control policies, and in particular the dynamics, depend on the population distribution in the finite population setting, or the marginal distribution of the state variable of a representative agent for the infinite population setting. Hence, learning and planning for these control problems generally require estimating the reaction of the system to all possible state distributions of the agents. To overcome this issue, we consider linear function approximation for the control problem and provide coordinated and independent learning methods. We rigorously establish error upper bounds for the performance of learned solutions. The performance gap stems from (i) the mismatch due to estimating the true model with a linear one, and (ii) using the infinite population solution in the finite population problem as an approximate control. The provided upper bounds quantify the impact of these error sources on the overall performance.
\end{abstract}


\section{Introduction}

The goal of the paper is to present various learning methods for mean-field control problems under linear function approximations and to provide provable error bounds for the learned solutions.

\subsection{Literature Review}
Learning for multi agent control problems is a practically relevant and a challenging problem where there has been as a growing interest in recent years. A general solution methodology for multi-agent control problems is difficult to obtain and the solution, in general, is intractable except for special information structures between the agents. We refer the reader to the survey paper by \cite{zhang2021multi} for a substantive summary of learning methods in the context of multi-agent decision making problems. 

In this paper, we study a particular case of multi-agent problems in which both the agents and their interactions are symmetric and homogeneous. For these mean-field type decision making problems, the agents are coupled only through the so-called mean-field term. These problems can be broadly divided into two categories; mean-field game problems where the agents are competitive and interested in optimizing their self objective functions, and mean-field control problems, where the agents are interested in a common objective function optimization. We cite some papers by  \cite{gomes2014mean,carmona2013probabilistic,bensoussan2013mean,tembine2013risk,huang2007large,anahtarci2022q,elie2020convergence,fu2019actor,guo2019learning, perrin2020fictitious,subramanian2019reinforcement,saldi2018markov, saldi2019approximate} and references therein, for papers in  mean-field game setting. We do not discuss these in detail as our focus will be on mean-field control problems which are significantly different in both analysis and the nature of the problems of interest.

 For mean-field control problems, where the agents are cooperative and work together to minimize (or maximize) a common cost (or reward) function, see \cite{bayraktar2018randomized,djete2022mckean, lauriere2014dynamic,  carmona2022convergence,pham2017dynamic,carmona2021convergence,germain2022numerical,bayraktar2021mean,bayraktar2021solvability} and references therein for the study of dynamic programming principle and learning methods in continuous time.  In particular, we point out the papers  \cite{lacker2017limit,djete2022mckean} which provide the justification for studying the centralized limit problem by rigorously connecting the large population decentralized setting and the infinite population limit problem.

For papers studying mean-field control in discrete time, we refer the reader to \cite{motte2022quantitative,bauerle2021mean,gu2021mean,gu2019dynamic,motte2022mean,carmona2019model}. \cite{motte2022quantitative,motte2022mean} study existence of solutions to the control problem in both infinite and finite population settings, and they rigorously establish the connection between the finite and infinite population problems.  \cite{bauerle2021mean} studies the finite population mean-field control problems and their infinite population limit, and provide solutions of the ergodic control problems for some special cases.

In the context of learning,  \cite{gu2021mean,gu2019dynamic} study dynamic programming principle and Q learning methods directly for the infinite population control problem. The value functions and the Q functions are defined for the lifted problem, where the state process is the measure-valued mean-field flow. They consider dynamics without common noise, and thus the learning problem from the perspective of a coordinator becomes a deterministic one. 

\cite{carmona2019model} also considers the limit (infinite population) problem and studies different classes of policies that achieve optimal performance for the infinite population (limit problem) and focuses on Q learning methods for the problem after establishing the optimality of randomized feedback policies for the agents. The learning problem considers the state as the measure valued mean-field term and defines a learning problem over the set of probability measures where various approximations are considered to deal with the high dimension issues.

\cite{angiuli2023convergence,angiuli2022unified} have studied learning methods for the mean-field game and control problems from  a joint lens. However, for the control setup, they consider a different control objective compared to the previously cited papers. In particular, they aim to optimize the asymptotic phase of the control problem where the agents are assumed to reach to their stationary distributions under joint symmetric policies. Furthermore, the agents only use their local state variables, and thus the objective is to find a stationary measure for the agents where the cost is minimized under this stationary regime. Since the agents only use their local state variables (and not the mean-field term) for their control, the authors can define a Q function over the finite state and action spaces of the agents.

\cite{pasztor2023efficient} consider a closely related problem to our setting, where they propose model-based learning methods for the mean-field control. Similar to \cite{gu2021mean,gu2019dynamic}, they directly work with the infinite population dynamics without analyzing the approximation consistency between the finite-population dynamics and their infinite-population counterpart. Furthermore, they restrict the dynamics to the models with additive noise, and the optimality search is within deterministic and Lipschitz continuous controls.

We also note that there are various studies that focus on the application of the mean-field modeling using numerical methods based on machine learning techniques, see e.g. the works by \cite{ruthotto2020machine,achdou2020mean,lauriere2022scalable}.

In this paper, we will consider the learning problem using an alternative formulation where the state is represented as the measure valued mean-field term. To approximate this uncountable space, and the cost and transition functions, different from the previous works in the mean-field control setting, we will consider linear function approximation methods. These methods have been studied well for single agent discrete time stochastic control problems. We cite papers by \cite{melo2008analysis,carvalho2020new,CsabaSmart,jin2020provably,meyn2024projected} in which reinforcement learning techniques are used to study Markov decision problems with continuous state spaces using linear function approximations.

{\bf Contributions.}

\begin{itemize}
\item In Section \ref{learning_sec}, we present the learning methods using linear function approximation. We focus on various scenarios.
\begin{itemize}
\item We first consider the ideal case where we assume that the team has infinitely many agents. For this case, we study; (i) learning by a coordinator who has access to information about  every agent in the team, and estimates a model from a data set by fitting a linear model that minimizes the $L_2$ distance between the training data and the estimate linear model, (ii) each agent estimates their own linear model using their local information via an iterative algorithm from a single sequence of data.
\item In Section \ref{ind_learn_fin}, we consider the practical case, where the team has finitely many agents, and they aim to estimate a linear model from a single sequence of data, using their local information variables.
\end{itemize} 
\item The methods we study in Section \ref{learning_sec} minimize the $L_2$ distance between the learned linear model and the actual model under a probability measure that depends on the training data.   However, to find upper bounds for the performance loss of the policies designed for the learned linear estimates in any scenario, we need uniform estimation errors rather than  $L_2$ estimation errors.   In Section \ref{unif_err_est}, we generalize  $L_2$ error bounds to uniform error bounds.

\item The proposed learning methods do not match the true model perfectly in general, due to linear approximation mismatch. Therefore, finally, in Section \ref{perf_loss}, we provide upper bounds on the performance of the policies that are designed for the learned models when they are applied on the true control problem. We note that the flow of the mean-field term is deterministic for infinitely many agents, and thus can be estimated using the dynamics without observing the mean-field term.   Therefore, for the execution of the policies we focus on two methods, (i) {\it open loop control}, where the agents only observe their local states and estimate the mean-field term with the learned dynamics, (ii) {\it closed loop control} where the agents observe both their local information and the mean-field term. For each of these execution procedures, we provide upper bounds for the performance loss. As in Section \ref{learning_sec}, we first consider the ideal case where it is assumed that the system has infinitely many agents. In this case, the error bound depends on the uniform model mismatch between the learned model and the true model. We then consider the case with finitely many agents. We assume that each agent follows the policy that they calculate considering the limit (infinite population) model. In this case, the error upper bounds depend on both the uniform model mismatch, and an empirical concentration bound since we estimate the finitely many agent model with the infinite population limit problem.
\end{itemize}

\subsection{Problem formulation.}

The dynamics for the model are presented as follows: suppose $N$ agents (decision-makers or controllers) act in a cooperative way to minimize a cost function, and the agents share a common state and an action space denoted by $\mathds{X}$ and $\mathds{U}$.  We assume that $\mathds{X}$ and $\mathds{U}$ are finite. We refer the reader to the paper by \cite{bayraktar2023finite}, for finite approximations of mean-field control problems where the state and actions spaces of the agents are continuous. For any time step $t$, and agent $i\in\{1,\dots,N\}$ we have
\begin{align}\label{dynamics}
x^i_{t+1}=f(x_t^i,u_t^i,\mu_{\bf x_t},w_t^i)
\end{align}
for a measurable function $f$, where $\{w_t^i\}$ denotes the i.i.d. idiosyncratic noise process.

 Furthermore, $\mu_{\bf x}\in\mathcal{P}_N(\mathds{X})$ denotes the empirical distribution of the agents on the state space $\mathds{X}$ such that for a given joint state of the team of agents ${\bf x}:=(x^1,\dots,x^N)\in \mathds{X}^N$
\begin{align*}
\mu_{\bf x}(\cdot):=\frac{1}{N}\sum_{i=1}^N\delta_{x^i}(\cdot)
\end{align*}
where $\delta_{x^i}$ represents the Dirac measure centered at $x^i$.
Throughout this paper, we use the notation
\begin{align*}
\mathds{X}^N:=\underbrace{\mathds{X}\times\dots\times\mathds{X}}_{\text{ $N$ times}}
\end{align*}
to denote the space of all joint state variables of the team
equipped with the product topology on $\mathds{X}^N$.
We further define $\mathcal{P}_N(\mathds{X})$, the set of all  empirical measures on $\mathds{X}$ constructed using sequences of $N$ states in $\mathds{X}$, such that
\begin{align*}
\mathcal{P}_N(\mathds{X}):=\{\mu_{\bf x}: {\bf x}=(x^1,\dots,x^N)\in\mathds{X}^N\}.
\end{align*}
Note that $\mathcal{P}_N(\mathds{X})\subset \P(\mathds{X})$ where $\P(\mathds{X})$ denotes the set of all probability measures on $\mathds{X}$ equipped with the weak convergence topology.

Equivalently, the next state of the agent $i$ is determined by some stochastic kernel, that is, a regular conditional probability distribution:
\begin{align}\label{kernel_common_noise}
\mathcal{T}(\cdot|x_t^i,u_t^i,\mu_{\bf x_t}).
\end{align}
At each time stage $t$, each agent receives a cost determined by a measurable stage-wise cost function $c:\mathds{X}\times\mathds{U}\times \mathcal{P}_N(\mathds{X})\to \mathds{R}$. If the state, action, and empirical distribution of the agents are given by $x_t^i,u_t^i,\mu_{\bf x_t}$, then the agent receives the cost. 
\begin{align*}
c(x_t^i,u_t^i,\mu_{\bf x_t}).
\end{align*}
For the remainder of the paper, by an abuse of notation, we will sometimes denote the dynamics in terms of the vector state and action variables, ${\bf x}=(x^1,\dots,x^N)$, and ${\bf u}=(u^1,\dots,u^N)$, and vector noise variables ${\bf w}=(w^1,\dots,w^N)$ such that
\begin{align*}
{\bf x_{t+1}}=f({\bf x_t,u_t,w_t}).
\end{align*}
For the initial formulation, every agent is assumed to know the state and action variables of every other agent. We define an admissible policy for an agent $i$, as a sequence of functions $\gamma^i:=\{\gamma^i_t\}_t$, where $\gamma^i_t$ is a $\mathds{U}$-valued (possibly randomized) function which is measurable with respect to the $\sigma$-algebra generated by 
\begin{align}\label{info}
I_t=\{{\bf x}_0,\dots,{\bf x}_t,{\bf u}_0,\dots,{\bf u}_{t-1}\}.
\end{align}
Accordingly, an admissible {\it team} policy, is defined as $\gamma:=\{\gamma^1,\dots,\gamma^N\}$, where $\gamma^i$ is an admissible policy for the agent $i$. In other words, agents share the complete information.

The objective of the agents is to minimize the following cost function
\begin{align*}
J^N_\beta({\bf x}_0,\gamma)=\sum_{t=0}^{\infty}\beta^tE_\gamma\left[{\bf c}({\bf x}_t,{\bf u}_t)\right]
\end{align*}
where $E_\gamma$ denotes the expectation with respect to the probability measure induces by the team policy $\gamma$, and where
\begin{align*}
{\bf c}({\bf x}_t,{\bf u}_t):=\frac{1}{N}\sum_{i=1}^Nc(x^i_t,u^i_t,\mu_{{\bf x}_t}).
\end{align*}
The optimal cost is defined by
\begin{align}\label{opt_cost}
J_\beta^{N,*}({\bf x}_0):=\inf_{\gamma\in\Gamma}J_\beta^N({\bf x}_0,\gamma)
\end{align}
where $\Gamma$ denotes the set of all admissible team policies.


We note that this information structure (\ref{info}) will be our benchmark for evaluating the performance of the approximate solutions using simpler information structures presented in the paper. In other words, the value function that is achieved when the agents share full information and full history will be taken to be our reference point for simpler information structures. 

For example, one immediate observation is that the problem under full information sharing can be reformulated as a centralized control problem where the state and action spaces are  $\mathds{X}^N$ and $\mathds{U}^N$. Therefore, one can consider Markov policies such that $I_t=\{{\bf x}_t\}$ without loss of optimality. 

However, if the problem is modeled as an MDP with state space $\mathds{X}^N$ and action space $\mathds{U}^N$, we face some computational challenges:
\begin{itemize}
\item[ (i)] the curse of dimensionality when $N$ is large, since $\mathds{X}^N$ and $\mathds{U}^N$ might be too large even when $\mathds{X,U}$ are of manageable size,
\item[(ii)] the curse of coordination: even if the optimal team policy is found, its execution at the agent level requires coordination among the agents. In particular, the agents may need to follow asymmetric policies to achieve optimality, even though we assume full symmetry for the dynamics and the cost models. 
The following simple example from \cite{bayraktar2023infinite} shows that the agents may need to follow asymmetric policies to achieve optimality which requires coordination among the agents.
\end{itemize}

\begin{example}\label{counter_ex}
Consider a team control problem with two agents, i.e. $N=2$. We assume that $\mathds{X}=\mathds{U}=\{0,1\}$. The stage wise cost function of the agents is defined as
\begin{align*}
c(x,u,\mu_{\bf x}) = \|\mu_{\bf x}-\bar{\mu})\|
\end{align*} 
where 
\begin{align*}
\bar{\mu}=\frac{1}{2}\delta_0 + \frac{1}{2}\delta_1.
\end{align*}
In words, the state distribution should be distributed equally over the state space $\{0,1\}$ for minimal stage-wise cost. For the dynamics we assume a deterministic model such that
\begin{align*}
x_{t+1}=u_t.
\end{align*}
In words, the action of an agent purely determines the next state of the same agent. The goal of the agents is to minimize
\begin{align*}
\sum_{t=0}^{\infty}\beta^t E^{g^1,g^2}\left[\frac{c(x_t^1,u_t^1,\mu_{\bf x_t}) +c(x_t^2,u_t^2,\mu_{\bf x_t})  }{2}\right]
\end{align*}
for some initial state values ${\bf{x}}_0=[x_0^1,x_0^2]$, by choosing policies $g^1,g^2$. The expectation is over the possible randomization of the policies.  We assume full information sharing such that every agent has access to the state and action information of the other agent. 

We let the initial states be $x_0^1=x_0^2=0$. An optimal policy for the agents for the problem is given by
\begin{align*}
&g^1(0,0)= 0, \qquad g^2(0,0)=1\\
&g^1(0,1)= 0 ,\qquad g^2(0,1)=1\\
 &g^1(1,0)= 1 ,\qquad g^2(1,0)=0\\
&g^1(1,1)= 1 ,\qquad g^2(1,1)=0
\end{align*}
which always spreads the agents equally over the state space. One can realize that, when the agents are positioned at either $(0,0)$ or $(1,1)$, they have to use personalized policies to decide on which one to be placed at $0$ or $1$. 

For any symmetric policy $g^1(x^1,x^2)=g^2(x^1,x^2)=g(x^1,x^2)$, including the randomized ones, there will always be cases with strict positive probability, where the agents are positioned at the same state, and thus the performance will be strictly worse than the optimal performance.
\end{example}

A standard approach to deal with mean-field control problems when $N$ is large is to consider the infinite population problem, i.e. taking the limit $N\to \infty$. A propagation of chaos argument can be used to show that in the limit, the agents become asymptotically independent. Hence, the problem can be formulated from the perspective of a representative single agent.  This approach is suitable to deal with coordination challenges, as the correlation between the agents vanish in the limit, and thus the symmetric policies can achieve optimal performance for the infinite population control problem. In particular, for Example \ref{counter_ex} in the infinite population setting, the optimal policy is to follow a randomized policy such that $Pr(u=1)=Pr(u=0)=\frac{1}{2}$. We will introduce  the limit problem  in Section \ref{limit_problem} and make the connections between the limit problem and the finite population problem rigorous.

\subsection{Preliminaries.} 

Recall that we assume that the state and action spaces of agents $\mathds{X,U}$ are finite (see \cite{bayraktar2023finite} for finite approximations of continuous space mean-field control problems). 

{\bf Note.} Even though we assume that $\mathds{X}$ and $\mathds{U}$ are finite, we will continue using integral signs instead of summation signs for expectation computations due to notation consistency, by simply considering Dirac delta measures.

We metrize $\mathds{X}$ and $\mathds{U}$ so that $d(x,x')=1$ if $x\neq x'$ and $d(x,x')=0$ otherwise. Note that with this metric, for any $\mu,\nu\in\P(\mathds{X})$ and for any coupling $Q$ of $\mu,\nu$, we have that
\begin{align*}
E_Q\left[|X-Y|\right]= P_Q(X\neq Y)
\end{align*}
which in particular implies  via the optimal coupling that 
\begin{align*}
W_1(\mu,\nu)=\|\mu-\nu\|_{TV}
\end{align*}
where $W_1$ denotes the first order Wasserstein distance, or the Kantorovich–Rubinstein metric, and $\|\cdot\|_{TV}$ denotes the total variation norm for signed measures.

Note further that for measures defined on finite spaces, we have that
\begin{align}\label{norm}
\|\mu-\nu\|_{TV}=\frac{1}{2}\|\mu-\nu\|_1=\frac{1}{2}\sum_x\left|\mu(x)-\nu(x)\right|.
\end{align}
Hence, in what follows we will simply write $\|\mu-\nu\|$ to refer to the distance between $\mu$ and $\nu$, which may correspond to the total variation distance,  the first order Wasserstein metric, or the normalized $L_1$ distance.

We also define the following Dobrushin coefficient for the kernel $\mathcal{T}$:
\begin{align}\label{dob}
\sup_{\mu,\gamma,x,\hat{x}}\|\int_{\mathds{U}}\mathcal{T}(\cdot|x,u,\mu)\gamma(du|x)-\int_{\mathds{U}}\mathcal{T}(\cdot|\hat{x},u,\mu)\gamma(du|\hat{x})\|=:\delta_T
\end{align}
Realize that we always have $\delta_T\leq 1$. In certain cases, we can also have strict inequality, e.g. if there exists some $x^*\in\mathds{X}$ such that
\begin{align*}
\mathcal{T}(x^*|x,u,\mu)<1-\alpha,\quad \forall x,u,\mu
\end{align*}
 then one can show that $\delta_T\leq1-\alpha<1.$

\subsection{Measure Valued Formulation of the Finite Population Control Problem}\label{fin_meas_sec}
For the remaining part of the paper, we will often consider an alternative formulation of the control problem for the finitely many agent case where the controlled process is the state distribution of the agents, rather than the state vector of the agents. We refer the reader to \cite{bayraktar2023finite} for the full construction; in this section, we will give an overview.

We define an MDP for the distribution of the agents, where the control actions are the joint distribution of the state and action vectors of the agents. 

We let the state space be  $\mathcal{Z}=\P_N(\mathds{X})$ which is the set of all empirical measures on $\mathds{X}$ that can be constructed using the state vectors of  $N$-agents. In other words, for a given state vector ${\bf x}=\{x^1,\dots,x^N\}$, we consider $\mu_{\bf x}\in \P_N(\mathds{X})$ to be the new state variable of the team control problem.

The admissible set of actions for some state $\mu\in\mathcal{Z}$, is denoted by $U(\mu)$, where
\begin{align}\label{add_act}
U(\mu)=\{\Theta\in \P_N(\mathds{U}\times\mathds{X})|\Theta(\mathds{U},\cdot)=\mu(\cdot)\},
\end{align}
that is, the set of actions for a state $\mu$, is the set of all joint empirical measures on $\mathds{X}\times\mathds{U}$ whose marginal on $\mathds{X}$ coincides with $\mu$.

We equip the state space $\mathcal{Z}$, and the action sets $U(\mu)$, with the norm $\| \cdot\|_{TV}$ (see (\ref{norm}))  .

One can show that \cite{bayraktar2023finite, bayraktar2023infinite} the empirical distributions of the states of agents $\mu_t$, and of the joint state and actions $\Theta_t$ define a controlled Markov chain such that
\begin{align}\label{trans_measure}
&Pr(\mu_{t+1}\in B|\mu_t,\dots\mu_0,\Theta_t,\dots,\Theta_0)=Pr(\mu_{t+1}\in B|\mu_t,\Theta_t)\nonumber\\
&:=\eta(B|\mu_t,\Theta_t)
\end{align}
where $\eta(\cdot|\mu,\Theta)\in \P(\P_N(\mathds{X}))$ is the transition kernel of the centralized measure valued MDP, which is induced by the dynamics of the team problem.

We define the stage-wise cost function $k(\mu,\Theta)$ by
\begin{align}\label{cost_measure}
k(\mu,\Theta):=\int c(x,u,\mu)\Theta(du,dx)=\frac{1}{N}\sum_{i=1}^Nc(x^i,u^i,\mu).
\end{align}



Thus, we have an MDP with state space $\mathcal{Z}$, action space $\cup_{\mu\in\mathcal{Z}}U(\mu)$, transition kernel $\eta$ and the stage-wise cost function $k$.

We define the set of admissible policies for this measured valued MDP as a sequence of functions $g=\{g_0,g_1,g_2,\dots\}$ such that at every time $t$, $g_t$ is measurable with respect to the $\sigma$-algebra generated by the information variables
\begin{align*}
\hat{I}_t=\{\mu_0,\dots,\mu_t,\Theta_0,\dots,\Theta_{t-1}\}.
\end{align*}
We denote the set of all admissible control policies by $G$ for the measure valued MDP.

In particular, we define the infinite horizon discounted expected cost function under a policy $g$ by
\begin{align*}
K^N_\beta(\mu_0,g)=E_{\mu_0}^\eta\left[ \sum_{t=0}^{\infty}\beta^tk(\mu_t,\Theta_t)\right].
\end{align*}
We also define the optimal cost by
\begin{align}\label{opt_cost2}
K_\beta^{N,*}(\mu_0)=\inf_{g\in G}K^N_\beta(\mu_0,g).
\end{align}

The following result shows that this formulation is without loss of optimality:
\begin{theorem}[\cite{bayraktar2023finite}]
Under Assumption \ref{main_assmp}, for any ${\bf x}_0$ that satisfies $\mu_{{\bf x}_0}=\mu_0$, that is for any ${\bf x}_0$ with distribution $\mu_0$, we have that
\begin{itemize}
\item[i.)]
\begin{align*}
K_\beta^{N,*}(\mu_{0})=J_\beta^{N,*}({\bf x}_0).
\end{align*}

\item[ii.)] There exists a stationary and Markov optimal policy $g^*$ for the measure valued MDP, and  using $g^*$, every agent can construct a policy $\gamma^i:\mathds{X}\times \P_N(\mathds{X})\to\mathds{U}$ such that for $\gamma:=\{\gamma^1,\gamma^2,\dots,\gamma^N\}$, we have that
\begin{align*}
J^N_\beta({\bf x_0},\gamma)=J_\beta^{N,*}({\bf x_0}).
\end{align*}
That is, the policy obtained from the measure valued formulation attains the optimal performance for the original team control problem.
\end{itemize}
\end{theorem}

\subsection{Mean-field Limit Problem}\label{limit_problem}
We now introduce the control problem for infinite population teams, i.e. for $N\to\infty$. For some agent $i\in \mathds{N}$, we define the dynamics as
\begin{align*}
x^i_{t+1}=f(x_t^i,u_t^i, \mu_t^i,w_t^i)
\end{align*}
where $x_0\sim\mu_0$ and $\mu_t^i=\mathcal{L}(X_t^i)$ is the law of the state at time $t$. The agent tries to minimize the following cost function:
\begin{align*}
J_\beta^{\infty}(\mu_0,\gamma)=\sum_{t=0}^\infty \beta^t E\left[c(X_t^i,U_t^i,\mu_t^i) \right]
\end{align*}
where $\gamma=\{\gamma_t\}_t$ is an admissible policy such that $\gamma_t$ is measurable with respect to the information variables 
\begin{align*}
I_t^i=\left\{ x_0^i,\dots,x_t^i,u_0^i,\dots,u_{t-1}^i,\mu_0^i,\dots,\mu_t^i  \right\}.
\end{align*}
Note that the agents are no longer correlated and they are indistinguishable. Hence, in what follows we will drop the dependence on $i$ when we refer to the infinite population problem. 

The problem is now a single agent control problem; however, the state variable is not Markovian. However, we can reformulate the problem as an MDP by viewing the state variable as the measure valued $\mu_t$. 

We let the state space to be $\P(\mathds{X})$. Different from the measure valued construction we have introduced in Section \ref{fin_meas_sec}, we let the action space to be $\Gamma = \P(\mathds{U})^{|\mathds{X}|}$. In particular, an action $\gamma(\cdot|x)\in\Gamma$ for the team is a randomized policy at the agent level. We equip $\Gamma$ with the product topology, where we use the weak convergence for each coordinate.  We note that each action $\gamma(du|x)$ and state $\mu(dx)$ induce a distribution on $\mathds{X}\times\mathds{U}$, which we denote by $\Theta(du,dx)=\gamma(du|x)\mu(dx)$.
 
 Recall the notation in (\ref{kernel_common_noise}); at time $t$, we can use the following stochastic kernel for the dynamics:
\begin{align*}
x_{t+1}\sim \mathcal{T}(\cdot|x_t,u_t,\mu_t)
\end{align*}
which is induced by the idiosyncratic noise $w^i_t$. Hence, we can define
\begin{align}\label{inf_kernel}
\mu_{t+1}=F(\mu_t,\gamma_t):= \int \mathcal{T}(\cdot|x,u,\mu_t)\gamma_t(du|x)\mu_t(dx).
\end{align}
Note that the dynamics are deterministic for the infinite population measure valued problem. Furthermore, we can define the stage-wise cost function as
\begin{align}\label{inf_cost}
k(\mu,\gamma):=\int c(x,u,\mu)\gamma(du|x)\mu(dx).
\end{align}
Hence, the problem is a deterministic MDP for the measure valued state process $\mu_t$. A policy, say $g:\P(\mathds{X})\to \Gamma$ for the measure-valued MDP, can be written as
\begin{align*}
g(\mu)=\gamma(du|x)
\end{align*}
for some $\mu\in\P(\mathds{X})$. That is, an agent observes $\mu$ and chooses their actions as an agent-level randomized policy $\gamma(du|x)$. 

We reintroduce the infinite horizon discounted cost of the agents for the measure valued formulation:
\begin{align*}
K_\beta(\mu_0,g) = \sum_{t=0}^\infty \beta^t k(\mu_t,\gamma_t)
\end{align*}
for some initial mean-field term $\mu_0$ and under some policy $g$. Furthermore, the optimal policy is denoted by 
\begin{align*}
K_\beta^*(\mu_0) = \inf_g K_\beta(\mu_0,g).
\end{align*}
At a given time $t$, the pair $(x_t,\mu_t)$ can be used as sufficient information for decision making by the agent $i$. Furthermore, if the model is fully known by the agents, then the mean-field flow $\mu_t$ can be perfectly estimated if every agent agrees and follows the same policy $g(\mu)$, since the dynamics of $\mu_t$ is deterministic.

We note that  for the infinite population control problem, the coordination requirement between the agents may be relaxed, though cannot be fully abandoned in general (see Section \ref{lim_dec}).  In particular, if the agents agree on a common policy $g(\mu)=\gamma(du|x,\mu)$, then for the execution of this policy, no coordination or communication is needed since every agent can estimate the mean-field term $\mu_t$ independently and perfectly.  Furthermore, every agent can use the same agent-level policy $\gamma(du|x,\mu)$ symmetrically, without any coordination with the other agents.

The following result makes the connection between the finite population and the infinite population control problem rigorous \cite{motte2022mean,bauerle2021mean,bayraktar2023infinite}.

\begin{assumption}\label{main_assmp}
\begin{itemize}
\item[i.] For the transition kernel $\mathcal{T}(\cdot|x,u,\mu)$ (see (\ref{kernel_common_noise}))
\begin{align*}
\|\mathcal{T}(\cdot|x,u,\mu) - \mathcal{T}(\cdot|x,u,\mu')\| \leq K_f \|\mu-\mu'\|
\end{align*} 
for some $K_f<\infty$, for each $x,u$ and for every $\mu,\mu'\in\P(\mathds{X})$.
\item[ii] $c$ is  Lipschitz in $\mu$ such that
\begin{align*}
|c(x,u,\mu)-c(x,u,\mu')|\leq K_c \|\mu-\mu'\|
\end{align*}
for some $K_c<\infty$.
\end{itemize}
\end{assumption}

\begin{theorem}
Under Assumption \ref{main_assmp}, the following holds:
\begin{itemize}
\item[i.] For any $\mu_0^N \to \mu_0$, 
\begin{align*}
\lim_{N\to\infty} K_\beta^{N,*}(\mu_0^N) = K_\beta^{\infty,*}(\mu_0).
\end{align*}
That is, the optimal value function of the finite population control problem converges to that of the infinite population control problem as $N\to\infty$.
\item[ii.] Suppose each agent solves the infinite population control problem given in (\ref{inf_kernel}) and (\ref{inf_cost}), and constructs their policies, say $$g_\infty(\mu)=\gamma_\infty(du|x,\mu).$$ If they follow the infinite population solution in the finite population control problem, for any $\mu_0^N \to \mu_0$ we then have
\begin{align*}
\lim_{N\to\infty} K_\beta^{N}(\mu_0^N,g_\infty) = K_\beta^{\infty,*}(\mu_0).
\end{align*}
That is, the symmetric policy constructed using the infinite population problem is near optimal for finite but sufficiently large populations.  
\end{itemize}
\end{theorem}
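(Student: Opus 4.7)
The plan is to establish part (ii) first via a propagation-of-chaos argument relying on Assumption~\ref{main_assmp}, then obtain the upper bound in part (i) as an immediate corollary, and finally handle the lower bound in part (i) by a compactness/weak-convergence argument on the induced empirical occupation measures.

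For part (ii), fix the infinite-population optimal policy $g_\infty(\mu) = \gamma_\infty(du|x,\mu)$ and denote by $\mu_t^\infty$ the deterministic mean-field flow $\mu_{t+1}^\infty = F(\mu_t^\infty,\gamma_\infty(\cdot|\cdot,\mu_t^\infty))$ starting from $\mu_0$, as defined in (\ref{inf_kernel}). Let $\mu_t^N$ be the random empirical distribution when each of the $N$ agents plays $\gamma_\infty(\cdot|x_t^i,\mu_t^N)$. I would prove by induction on $t$ that $E[\|\mu_t^N - \mu_t^\infty\|] \to 0$ as $N\to\infty$. The base case is the hypothesis $\mu_0^N \to \mu_0$. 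For the inductive step, decompose
\begin{align*}
\mu_{t+1}^N - \mu_{t+1}^\infty &= \bigl(\mu_{t+1}^N - F(\mu_t^N,\gamma_\infty(\cdot|\cdot,\mu_t^N))\bigr) \\
&\quad + \bigl(F(\mu_t^N,\gamma_\infty(\cdot|\cdot,\mu_t^N)) - F(\mu_t^\infty,\gamma_\infty(\cdot|\cdot,\mu_t^\infty))\bigr).
\end{align*}
Conditional on the time-$t$ configuration, the transitions of distinct agents are independent, so the first term is an empirical concentration error of order $1/\sqrt{N}$ since $\mathds{X}$ is finite. The second term is controlled via the Lipschitz assumption on $\mathcal{T}$ in $\mu$ (Assumption~\ref{main_assmp}.i), together with the continuity of $\gamma_\infty$ in $\mu$ that is inherited from the dynamic programming fixed point (in the finite state/action setting, one may alternatively approximate $\gamma_\infty$ by a Lipschitz selector and absorb the error). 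Taking expectations, iterating the recursion over $t$, and invoking Lipschitz continuity of $c$ in $\mu$ (Assumption~\ref{main_assmp}.ii) together with boundedness of $c$ on the finite product space, I can bound $|K_\beta^N(\mu_0^N,g_\infty) - K_\beta^\infty(\mu_0,g_\infty)|$ by a discounted sum of stagewise errors that tends to $0$; the geometric factor $\beta^t$ tames the amplification from the Lipschitz constants at each stage.

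Part (i) then splits into two inequalities. The upper bound $\limsup_N K_\beta^{N,*}(\mu_0^N) \leq K_\beta^{\infty,*}(\mu_0)$ follows immediately from part (ii) since $K_\beta^{N,*}(\mu_0^N) \leq K_\beta^N(\mu_0^N,g_\infty)$. For the matching lower bound, I would choose an $\eps$-optimal sequence of policies $g_N$ for the finite-population measure-valued MDP and consider the joint laws of the resulting trajectories $(\mu_t^N,\Theta_t^N)_{t\geq 0}$. Because $\mathds{X}$ and $\mathds{U}$ are finite, $\P(\mathds{X})$ and the action sets $U(\mu)$ are compact in $W_1$, so the family of joint laws is tight; extract a weakly convergent subsequence whose limit is supported on measure-valued trajectories. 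Using Lipschitz continuity of $F$ and the empirical-concentration bound from part (ii) (applied to the policy $g_N$ rather than $g_\infty$), the limit trajectory is shown to satisfy the deterministic infinite-population dynamics (\ref{inf_kernel}), so it can be identified with a (possibly randomized/relaxed) admissible policy for the limit MDP. Continuity of $k$ in both arguments, together with bounded-convergence applied to the discounted cost, yields $K_\beta^{\infty,*}(\mu_0) \leq \liminf_N K_\beta^N(\mu_0^N,g_N) \leq \liminf_N K_\beta^{N,*}(\mu_0^N) + \eps$, and sending $\eps\downarrow 0$ finishes (i).

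The main obstacle will be this lower bound in (i): one must verify that the weak-subsequential limit of the random measure-valued trajectories not only exists but actually satisfies the deterministic recursion $\mu_{t+1} = F(\mu_t,\gamma_t)$ almost surely, which requires a careful decomposition of $\mu_{t+1}^N - F(\mu_t^N,\Theta_t^N/\mu_t^N)$ into a vanishing empirical noise term and a continuous-mapping part, and then identifying the induced conditional kernel $\gamma_t(du|x)$ from the limiting joint distribution $\Theta_t$ via disintegration. The propagation-of-chaos step for (ii) is comparatively standard once the Lipschitz bounds in Assumption~\ref{main_assmp} are invoked.
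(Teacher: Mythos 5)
Your overall architecture (propagation of chaos for part (ii), the upper bound in (i) as a corollary, and a tightness/weak-convergence argument for the lower bound in (i)) is the route taken by the references the paper cites for this theorem; the paper itself does not prove the statement but instead develops quantitative analogues in Section \ref{perf_loss} and the appendix. The genuine gap in your proposal is the inductive step of part (ii): to control the second term of your decomposition you need the map $\mu \mapsto F(\mu,\gamma_\infty(\cdot|\cdot,\mu))$ to be (uniformly) continuous, and you justify this by saying continuity of $\gamma_\infty$ in $\mu$ is ``inherited from the dynamic programming fixed point.'' That is false in general: under Assumption \ref{main_assmp} the value function can be shown Lipschitz (and only under the extra condition $\beta(K_f+\delta_T)<1$), but the optimal \emph{selector} $\gamma_\infty(du|x,\mu)$ is in general only measurable in $\mu$ --- the argmin correspondence can jump as $\mu$ varies even with finite $\mathds{X},\mathds{U}$. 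Your fallback, ``approximate $\gamma_\infty$ by a Lipschitz selector and absorb the error,'' is precisely the missing lemma rather than a remark: one must construct a Lipschitz (or piecewise-constant) $\eps$-optimal policy and then show the closed-loop error recursion survives its discontinuities, and neither step is carried out.

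The paper's own machinery avoids this issue in two ways you could borrow. In the open-loop execution (Lemma \ref{meas_diff}) the agents feed the \emph{deterministic predicted} flow $\hat{\mu}_t$ into the policy rather than the realized empirical measure $\mu_{\bf x_t}$, so the policy is always evaluated at the same measure argument on both sides of the comparison and no regularity of $\gamma$ in $\mu$ is ever invoked; only $K_f$, $\delta_T$ and the concentration constant $M_N$ enter the recursion. In the closed-loop case the paper replaces your trajectory-wise induction by a one-step Bellman comparison combined with the bound $\|K_\beta^*\|_{Lip}\le C/(1-\beta K)$, again never requiring continuity of the policy in $\mu$; Lemma \ref{finite_inf_dif} then carries out the two-sided exchange-of-controls argument (disintegrating the optimal $\Theta^N$ as $\gamma^N(du|x)\mu^N(dx)$ in one direction and realizing $\gamma_\infty$ through an empirical $\hat{\Theta}^N$ in the other) with explicit coupling and permutation estimates. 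Note that these quantitative arguments require $\beta(K_f+\delta_T)<1$, which the theorem as stated does not impose; your compactness argument for the lower bound in (i) is indeed the standard way to dispense with that hypothesis, but it is the hardest step and, as written, the identification of the limit with an admissible relaxed control of the deterministic limit MDP is only sketched.
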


\begin{remark}
The result has significant implications for the computational challenges we have mentioned earlier. Firstly, the second part of the result states that if the number of agents is large enough, then the symmetric policy obtained from the limit problem is near optimal. Hence,  the agents can use symmetric policies without coordination, solving their control problems as long as they have access to the mean-field term and their local state. Secondly, note that the flow of the mean-field term $\mu_t$ (\ref{inf_kernel}) is deterministic if there is no common noise affecting the dynamics. Thus, agents can estimate the marginal distribution of their local state variables $x_t^i$, without observing the mean-field term if they know the dynamics. In particular, without the common noise, the local state of the agents  and  the initial mean-field term $\mu_0$ are sufficient information for near optimality.

However, as we will see in what follows, to achieve near optimal performance, agents must agree on a particular policy $g(\mu)=\gamma(du|x,\mu)\mu(dx)$. In particular, if the optimal infinite population policy is not unique, and the agents apply different optimal policies without coordination, the results of the previous results might fail. Hence, coordination cannot be fully ignored.
\end{remark}

\subsection{Limitations of Full Decentralization}\label{lim_dec} We have argued in the previous section that the team control problem can be solved near optimally by using the infinite population control  solution. Furthermore, if the agents agree on the application of a common optimal policy, the resulting team policy can be executed independently in a decentralized way and achieves near-optimal performance. 

The following example shows that if the agents do not coordinate on which policy to follow, i.e. if they are fully decentralized, then the resulting team policy will not achieve the desired outcome.
\begin{example}\label{dec_counter}
Consider a team control problem with infinite population where $\mathds{X}=\mathds{U}=\{0,1\}$. The stage wise cost function of the agents is defined as
\begin{align*}
c(x,u,\mu) = \begin{cases}
\|\mu-\bar{\mu}_1\|& \text{ if } \mu(0)\leq \frac{3}{4}\\
\|\mu-\bar{\mu}_2\|& \text{ otherwise}
\end{cases}
\end{align*} 
where 
\begin{align*}
&\bar{\mu}_1=\frac{1}{2}\delta_0 + \frac{1}{2}\delta_1\\
&\bar{\mu}_2 =\delta_0.
\end{align*}
In words, the state distribution should be either be distributed equally over the state space $\{0,1\}$  or it should fully concentrate in state $0$ for minimal stage-wise cost. One can check that the cost function satisfies Assumption  \ref{main_assmp} for some $K_c<\infty$ (e.g. $K_c=1$).  For the dynamics we assume a deterministic model such that
\begin{align*}
x_{t+1}=u_t.
\end{align*}
In words, the action of an agent purely determines the next state of the same agent. The goal of the agents is to minimize
\begin{align*}
K_\beta(\mu_0 ,g)  =\limsup_{N\to \infty} \sum_{t=0}^\infty\beta^t E [\frac{1}{N}\sum_{i=1}^N c(x_t^i,u_t^i,\mu_{\bf x_t})] 
\end{align*}
where the initial distribution is given by $\mu_0 = \frac{1}{2}\delta_0 + \frac{1}{2}\delta_1$.

It is easy to see that there are two possible optimal policies for the agents $g_1(\mu)= \gamma_1(du|x,\mu)\mu(dx)$ and $g_2(\mu)= \gamma_2(du|x,\mu)\mu(dx)$  where
\begin{align*}
&\gamma_1(\cdot|x)= \frac{1}{2}\delta_0(\cdot) + \frac{1}{2} \delta_1(\cdot)\\
&\gamma_2(\cdot|x) = \delta_0(\cdot).
\end{align*} 
If all the agents coordinate and apply either $g_1$ or $g_2$ all together, the realized costs will be $0$, i.e.
\begin{align*}
K_\beta(\mu_0,g_1)=K_\beta(\mu_0,g_2) =0.
\end{align*}

However, if the agents do not coordinate and pick their policies from $g_1,g_2$ randomly, the cost incurred will be strictly greater than $0$. For example, assume that any given agent decides to use $g_1$ with probability $0.5$ and the policy $g_2$ with probability $0.5$. Then the resulting policy, say $\hat{g}$ will be such that 
\begin{align*}
\hat{g}(\mu)= \left(\frac{1}{2}\gamma_1(du|x) + \frac{1}{2}\gamma_2(du|x)\right)\mu(dx)
\end{align*}

Thus, at every time step $t \geq 1$, $\frac{1}{4}$ of the agents will be in state $1$ and $\frac{3}{4}$ of the agents will be in state $0$, hence the total accumulated cost of the resulting policy $\hat{g}$ will be 
\begin{align*}
K_\beta(\mu_0,\hat{g}) = \sum_{t=1}^\infty \beta^t \frac{1}{4} = \frac{\beta}{1-4\beta}>0.
\end{align*}
Thus, we see that if the optimal policy for the mean-field control problem is not unique, the agents cannot follow fully decentralized policies, and they need to coordinate at some level. For this problem, if they agree initially on which policy to follow, then no other communication is needed afterwards for the execution of the decided policy. Nonetheless, an initial agreement and coordination is needed to achieve the optimal performance.
\end{example}

We note that the issue with the previous example results from the fact that the optimal policy is not unique. If the optimal policy can be guaranteed to be unique, then the agents can act fully independently.






\section{Learning for Mean-field Control with Linear Approximations} \label{learning_sec}

We have seen in the previous sections that in general there are limitations for full decentralization, and that a certain level of coordination is required for optimal or near optimal performance during control. In this section, we will study the learning problem in which neither the agents nor the coordinator know the dynamics and aims to learn the model or optimal decision strategies from the data.

We have observed that the limit problem introduced in Section \ref{limit_problem} can be seen as a deterministic centralized control problem. In particular, if the model is known, and once it is coordinated which control strategy to follow, the agents do not  need further communication or coordination  to execute the optimal control. Each agent can simply apply an open-loop policy using only their local state information, and the mean-field term can be estimated perfectly, if every agent is following the same policy. However, to estimate the deterministic mean-field flow $\mu_t$, the model must be known. For problems where the model is not fully known, the open-loop policies will not be applicable. 

Our goal in this section is to present various learning algorithms to learn the dynamics and cost model of the control problem. We will first focus on the idealized scenario, where we assume that there exist infinitely many agents on the team. For this case, we provide two methods; (i) the first one where a coordinator has access to all information of every agent, and decides on the exploration policy, and (ii) the second one where each agent learns the model on their own by tracking their local state and the mean-field term. However, the agents need to coordinate for the exploration policy through a common randomness variable to induce stochastic dynamics for better exploration. Next, we study the realistic setting where the team has large but finitely many agents. For this case, we only consider an independent learning method where the agents learn the model on their own using their local information variables. 



Before we present our learning algorithms, we note that the space $\P(\mathds{X})$ is uncountable even under the assumption that $\mathds{X}$ is finite. Therefore, we will focus on finite representations of the cost function $c(x,u,\mu)$ and the kernel $\mathcal{T}(\cdot|x,u,\mu)$. In particular, we will try to learn the functions of the following form
\begin{align}\label{lin_form}
&c(x,u,\mu) =  {\bf \Phi}_{(x,u)}^\intercal(\mu) {\bf \theta}_{(x,u)}\nonumber\\
&\mathcal{T}(\cdot|x,u,\mu) = {\bf \Phi}_{(x,u)}^\intercal(\mu) {\bf Q}_{(x,u)}(\cdot)
\end{align}
where ${\bf \Phi}_{(x,u)}(\mu) =[ \Phi^1_{(x,u)}(\mu),\dots,\Phi^d_{(x,u)}(\mu)]^\intercal$,  for a set of linearly independent functions $ \Phi^j_{(x,u)}(\mu):\P(\mathds{X})\to \mathds{R}$ for each pair $(x,u)$, for some $d<\infty$. We assume that the basis functions ${\bf \Phi}_{(x,u)}(\mu)$ are known and the goal is to learn the parameters $\theta_{(x,u)}$ and ${\bf Q}_{(x,u)}(\cdot)$.  We assume $\theta_{(x,u)}\in\mathds{R}^d$, and ${\bf Q}_{(x,u)}(\cdot) = [Q^1_{(x,u)}(\cdot), \dots, Q_{(x,u)}^d(\cdot)]$ is a vector of unknown signed measures on $\mathds{X}$. 

In what follows, we will assume that the basis functions, $\Phi^j_{(x,u)}(\cdot)$ are uniformly bounded. Note that this is without loss of generality. 
\begin{assumption}\label{bdd_assmp}
We assume that 
\begin{align*}
\|\Phi^j_{(x,u)}(\cdot)\|_\infty\leq 1
\end{align*}
for every $(x,u)$ pair, and for all $j\in\{1,\dots,d\}$.
\end{assumption}

For the rest of the paper, we will use $\theta_{(x,u)}$ and $\theta(x,u)$ interchangeably; similarly we will use ${\bf Q}_{(x,u)}$ and ${\bf Q}(x,u)$ interchangeably.

\begin{remark}
We note that we {\bf do not} assume that the model and the cost function have the linear form given in (\ref{lin_form}). However, we will aim to learn and estimate models among the class of linear functions presented in (\ref{lin_form}). We will later analyze error bounds for the case where the actual model is not linear and thus the learned model does not perfectly match the true model and study the performance loss when we apply policies that are learned for the linear model.
\end{remark}

\subsection{Coordinated Learning with Linear Function Approximation for Infinitely Many Players}\label{coor_learn_sec}
In this section, we will consider an idealized scenario, where there are infinitely many agents, and a coordinator learns the model by linear function approximation.

\noindent{\bf Data collection.} For this section, we assume that there exists a training set $T$ that consists of a time sequence of length $M$. The training set is assumed to be coming from an arbitrary sequence of data. The data at each time stage contains  
\begin{align*}
x^i,u^i,X_1^i,c(x^i,u^i,\mu),\mu
\end{align*}
for all the agents present in the team, $i\in\{1,\dots,N,\dots\}$, where the agents' states are distributed according to $\mu$ at the given time step.
 That is, every data point includes the current state and action, the one-step ahead state, the stage-wise cost realization, and the mean-field term for every agent. Furthermore, we assume the ideal scenario where there are infinitely many agents. Hence, at every time step, the coordinator has access to infinitely many data points where the spaces $\mathds{X},\mathds{U}$ are finite. The coordinator then has access to infinitely many sample transitions observed under $(x, u, \mu)$, and thus, the kernel $\mathcal{T}(\cdot|x,u,\mu)$ can be perfectly estimated for every $x$ such that $\mu(x)>0$ and $\gamma(u|x)>0$, via empirical measures. Here, $\gamma$ represents the exploration policy of the agents.   We assume the following:
 \begin{assumption}\label{pos_expo}
     For any $x\in\mathds{X}$, the exploration policy for every agent puts positive probability on to every control action such that
     \begin{align*}
         \gamma(u|x)>0, \text{ for all } (x,u)\in\mathds{X\times U}.
     \end{align*}
 \end{assumption}

 We define the following sets for which the model and the cost functions can be learned perfectly within the training data: let $x\in\mathds{X}$, we define
 \begin{align}\label{learned_set}
P_x:=\{\mu\in T: \mu(x)>0\}.
 \end{align}
$P_x\subset \P(\mathds{X})$ denotes the set of probability measures which assign positive measure to a particular state $x\in\mathds{X}$ that are also in the training data for the mean-field terms. In particular, for a given $(x,u)$ pair,  the kernel $\mathcal{T}(\cdot|x,u,\mu)$ and the cost $c(x,u,\mu)$ can be  learned perfectly for every $\mu\in P_x$ with Assumption \ref{pos_expo}.

 For a given $x\in\mathds{X}$, we denote by $M_x$ the number of mean-field terms within the set $P_x$ (see (\ref{learned_set})). 
 For every $(x,u)\in\mathds{X}\times\mathds{U}$ pair, the coordinator aims to find ${\bf \theta}_{(x,u)}$ and ${\bf Q}_{(x,u)}$ such that 
\begin{align*}
&\frac{1}{M_x}\sum_{j=1}^{M_x}\left| c(x,u,\mu_j) - {\bf \Phi^\intercal_{(x,u)}}(\mu_j) {\bf \theta_{(x,u)}}\right|^2\\
&\frac{1}{M_x}\sum_{j=1}^{M_x}\left\| \mathcal{T}(\cdot |x,u,\mu_j) - {\bf \Phi^\intercal_{(x,u)}}(\mu_j) {\bf Q_{(x,u)}(\cdot)}\right\|
\end{align*}
is minimized.


The least squares linear models can be estimated in closed form for ${\bf \theta}_{(x,u)}$ and ${\bf Q}_{(x,u)}(\cdot)$ using the training data. We define the following vector and matrices to present the closed form solution in  a more compact form: for each $(x,u)\in\mathds{X}\times\mathds{U}$ we introduce ${\bf b}_{(x,u)} \in \mathds{R}^{M_x}$, and ${\bf d}_{(x,u)} \in \mathds{R}^{M_x\times |\mathds{X}|}$,
\begin{align}\label{lin_fit1}
{\bf b}_{(x,u)} = \begin{bmatrix}
           c(x,u,\mu_1) \\
           c(x,u,\mu_2) \\
           \vdots \\
           c(x,u,\mu_{M_x})
         \end{bmatrix}, \quad {\bf d}_{(x,u)}= \begin{bmatrix}
           \mathcal{T}(x^1|x,u,\mu_1) &  \mathcal{T}(x^2|x,u,\mu_1) & \dots & \mathcal{T}(x^{|\mathds{X}|}|x,u,\mu_1)  \\
           \mathcal{T}(x^1|x,u,\mu_2) &  \mathcal{T}(x^2|x,u,\mu_2) & \dots & \mathcal{T}(x^{|\mathds{X}|}|x,u,\mu_2)\\
           \vdots \\
           \mathcal{T}(x^1|x,u,\mu_{M_x}) &  \mathcal{T}(x^2|x,u,\mu_{M_x}) & \dots & \mathcal{T}(x^{|\mathds{X}|}|x,u,\mu_{M_x})
         \end{bmatrix} .
\end{align}\label{lin_fit2}
Furthermore, we also define ${\bf A}_{(x,u)}\in \mathds{R}^{d\times M_x}$
\begin{align}
{\bf A}_{(x,u)} = \left[ {\bf \Phi}_{(x,u)}(\mu_1), \dots, {\bf \Phi}_{(x,u)}(\mu_{M_x})\right].
\end{align}
Assuming that ${\bf A}_{(x,u)}$ has linearly independent columns, i.e. ${\bf \Phi}_{(x,u)}(\mu_i)$ and ${\bf \Phi}_{(x,u)}(\mu_j)$ are linearly independent for $\mu_i\neq\mu_j$, the estimates for $\theta_{(x,u)}$ and ${\bf Q}_{(x,u)} $ can be written as follows
\begin{align}\label{coor_learn}
\theta_{(x,u)} &= \left({\bf A}^\intercal_{(x,u)} {\bf A}_{(x,u)}\right)^{-1} {\bf A}^\intercal_{(x,u)} {\bf b}_{(x,u)}\nonumber\\
{\bf Q}_{(x,u)}& =  \left({\bf A}^\intercal_{(x,u)} {\bf A}_{(x,u)}\right)^{-1} {\bf A}^\intercal_{(x,u)} {\bf d}_{(x,u)}.
\end{align}
Note that above, each row of ${\bf Q}_{(x,u)}$ represents a signed measure on $\mathds{X}$.


\subsection{Independent Learning with Linear Function Approximation for Infinitely Many Players}\label{ind_learn_inf}
In this section, we will introduce a learning method where the agents perform independent learning to some extent. Here, rather than using a training set, we will focus on an online learning algorithm where at every time step, agent $i$ observes $x^i,u^i, X_1^i,c(x^i,u^i,\mu),\mu$. That is, each agent has access to their local state, action, cost realizations, one-step ahead state, and the mean-field term. However, they do not have access to local information about the other agents.

We first argue that full decentralization is usually not possible in the context of learning either. Recall that the mean-field flow is deterministic if every agent follows the same independently randomized agent-level policy. Furthermore, the flow of the mean-field terms remains deterministic even when the agents choose different exploration policies if the randomization is independent. To see this, assume that each agent picks some policy $\gamma_w(du|x)$ randomly by choosing $w\in \mathds{W}$ from some arbitrary distribution, where the mapping  $w\to \gamma_w(du|x)$ is predetermined. If the agents pick $w\in\mathds{W}$ independently, the mean-field dynamics is given by
\begin{align*}
\mu_{t+1} (\cdot)= \int \mathcal{T}(\cdot|x,u,\mu_t)\gamma_w(du|x)P_w(dw)\mu_t(dx) 
\end{align*}
where $P_w(dw)$ is the distribution by which the agents perform their independent randomization for the policy selection. Hence, the mean-field term dynamics follow a deterministic flow.  Note that for the above example, for simplicity, we assume that the agents pick according to the same distribution. In general, even if the agents follow different distributions for $w\in\mathds{W}$, the dynamics of the mean-field flow would remain deterministic according to a mixture distribution.  

Deterministic behavior might cause poor exploration performance. There might be cases where the mean-field flow gets stuck at a fixed distribution without learning or exploring the `important' parts of the space $\P(\mathds{X})$ sufficiently. To overcome this issue, and to make sure that the system is stirred sufficiently well during the exploration, one option is to introduce a common randomness for the selection of the exploration policies. In particular, each agent follows a randomized policy $\gamma_w(du|x)$   where the common randomness $w\in\mathds{W}$ is mutual information. Then the dynamics of the mean-field flow can be written as
\begin{align}\label{expo_random}
\mu_{t+1}(\cdot)=F(\mu_t,w):= \int \mathcal{T}(\cdot|x,u,\mu_t)\gamma_w(du|x)\mu_t(dx).
\end{align}
The common noise variable ensures  a level of coordination among the agents. However, this is still a significant relaxation compared to full coordination where the agents share their full state or control data.  In this section, we show that agents can construct independent learning iterates that converge by coordinating through an arbitrary common source of randomness.

We assume the following for the mean-field flow during the exploration:
\begin{assumption}\label{exp_assmp}
Consider the Markov chain $\{\mu_t\}_t\subset \P(\mathds{X})$ whose dynamics are given by (\ref{expo_random})
We assume that $\mu_t$ has geometric ergodicity with a unique invariant probability measure $P(\cdot)\in \P(\P(\mathds{X}))$ such that 
\begin{align*}
\|Pr(\mu_t\in\cdot) - P(\cdot)\|_{TV}\leq K \rho^t
\end{align*}
for some $K<\infty$ and some $\rho<1$.
\end{assumption}

\begin{remark}
We can establish some sufficient conditions on the transition kernel of the system to test the ergodicity. We note that (\cite[p 56, 3.1.1]{hernandez2012adaptive}) a sufficient condition is the following: there exists a mean-field state, say $\mu^*\in\P(\mathds{X})$ such that
\begin{align}\label{reach}
Pr\left( w:\int \mathcal{T}(\cdot|x,u,\mu)\gamma_w(du|x)\mu(dx)=\mu^*(\cdot)  \right)>0, \text{ for all } \mu \in\P(\mathds{X}).
\end{align}
That is, we need to be able to find a set of common noise realizations whose induced randomized exploration policies can take the state distribution to $\mu^*$ independent of the starting distribution $\mu$.

The assumption stated in this form indicates that the condition is of the stochastic reachability or controllability type. It requires that from any initial distribution $\mu$, there exists a control policy that can steer the distribution of the system to some target measure $\mu^*$. We also note that the above can be generalized to a $k$-step transition requirement. Analyzing this stochastic controllability behavior for the mean field systems is beyond the scope of the current paper, however, we give some examples in what follows.

We look further into (\ref{reach}). We fix some $x_1\in\mathds{X}$, and we define the following values:
\begin{align*}
&U(x,\mu) = \max_u\mathcal{T}(x_1|x,u,\mu)\\
&L(x,\mu) = \min_u\mathcal{T}(x_1|x,u,\mu).
\end{align*}
Note that $\mu^*(x_1)$ is the average of the probabilities $\mathcal{T}^\gamma(x_1|x,\mu):=\int\mathcal{T}(x_1|x,u,\mu)\gamma_w(du|x)$ under the measure $\mu$ for the $x$ values. Furthermore, by selecting an appropriate randomized policy, we can control these values in the interval $$I(x,\mu):=\left[L(x,\mu),U(x,\mu)\right].$$ Thus, if the intersection of these intervals is nonempty, i.e. if
\begin{align}\label{common_prob}
\bigcap_{x,\mu}I(x,\mu)\neq\emptyset
\end{align}
then one can set $\mu^*(x_1)$ to be a value in this intersection independent of $\mu$. By doing this for all $x_1$, we can set a reachable $\mu^*$ from any $\mu\in\P(\mathds{X})$. As a result, if (\ref{common_prob}) holds for every $x_1$, then (\ref{reach}), and thus Assumption \ref{exp_assmp} can be shown to hold.

A somewhat restrictive example for (\ref{common_prob}) is the following: assume that there exists a control action $u^*$ which can reset the state to some $x^*$ from any state $x$ and any mean-field term $\mu$, that is 
\begin{align*}
\mathcal{T}(x^*|x,u^*,\mu)=1 \text{ for all } x,\mu.
\end{align*}
This means that $1\in I(x,\mu)$ for all $x,\mu$ when $x_1=x^*$, and $0\in I(x,\mu)$ for all $x,\mu$ when $x_1\neq x^*$. Hence, $\mu_1(\cdot)= \delta_{x^*}(\cdot)$ can be reached from any starting point by applying the policy $\gamma(x)=u^*$ for all $x$. If this policy is among the set of exploration policies, the ergodicity assumption for the mean-field flow would be satisfied.  

We note again that a general result would require more in depth analysis, however, the above gives some idea on the implications of this assumption on the controllability of the mean-field model.
\end{remark}

We now define the trained measures, $P_{x} \in \P(\P(\mathds{X}))$ for each $(x,u)$ pair based on the invariant measure for the mean-field flow. Under Assumption \ref{pos_expo}, that is assuming $Pr(u|x)=\int \gamma_w(u|x)P_w(dw)>0$ for all $(x,u)$ pairs, we can write
\begin{align}\label{learned_set_ind}
    P(\mu\in A|x,u)&=\frac{Pr(x,u,\mu\in A)}{Pr(x,u)}\nonumber\\
    &=\frac{\int_{\mu\in A} \int_w \gamma_w(u|x)P_w(dw)\mu(x) P(d\mu) }{\int_{\mu\in \P(\mathds{X})} \int_w \gamma_w(u|x)P_w(dw)\mu(x) P(d\mu) }\nonumber\\
    &=\frac{\int_{\mu\in A} \mu(x) P(d\mu) }{\int_{\mu\in \P(\mathds{X})} \mu(x) P(d\mu) }=:P_x(\mu\in A)
\end{align}
Note that the trained sets of mean-field terms are independent of the control action $u$, as the exploration policies are independent of the mean-field terms given the state $x$. These sets have similar implications as the sets defined in (\ref{learned_set}). In particular, they indicate for which mean-field terms, one can estimate the kernel $\mathcal{T}(\cdot|x,u,\mu)$ and the cost function $c(x,u,\mu)$ via the training process. 

We now summarize the algorithm used for each agent.  We drop the dependence on agent identity $i$, and summarize the steps for a generic agent. At every time step $t$, every agent performs the following steps:
\begin{itemize}
\item Observe the common randomness $w$ given by the coordinator, and pick an action such that $u_t\sim \gamma_w(\cdot|x_t)$
\item Collect $x_t,u_t,x_{t+1},\mu_t,c$ where $c=c(x_t,u_t,\mu_t)$
\item For all $(x,u)\in\mathds{X}\times\mathds{U}$
\begin{align}\label{ind_learn1}
{\bf \theta}_{t+1}(x,u) = {\bf \theta}_t(x,u) + \alpha_t(x,u){\bf \Phi}(x,u,\mu_t)\left[c-{\bf \Phi}^\intercal(x,u,\mu_t) {\bf \theta}_t(x,u)\right]
\end{align}
\item Note that the signed measure vector ${\bf Q}_t(\cdot|x,u)$ consists $d$ signed measures defined on $\mathds{X}$, we denote by ${\bf Q}^j_t(x,u)$ the vector values of  ${\bf Q}_t(x^j|x,u)$ for $x^j\in \mathds{X}$ where $j\in\{1\dots,|\mathds{X}|\}$. For all $(x,u)$ and $j\in\{1\dots,|\mathds{X}|\}$
\begin{align}\label{ind_learn2}
{\bf Q}_{t+1}^j(x,u) = {\bf Q}_t^j(x,u) + \alpha_t(x,u) {\bf \Phi}(x,u,\mu_t)\left[\mathds{1}_{\{x_{t+1}=x^j\}} - {\bf \Phi}^\intercal(x,u,\mu_t) {\bf Q}_t^j(x,u) \right].
\end{align}

\end{itemize}

We next show that the above algorithm converges if the learning rates are chosen properly. To show the convergence, we first present a convergence result for stochastic gradient descent algorithms with quadratic cost, where the error is Markov and stationary.  We note that similar results have been established in the literature for the stochastic gradient iterations under Markovian noise processes under various assumptions; however, verifying most of these assumptions, such as the boundedness of the gradient, the boundedness of the iterates, or uniformly bounded variance, is not straightforward. Hence, we provide a proof in the appendix for completeness.
\begin{prop}\label{sgd} 
Let $\{S_t\} \subset \mathds{S}$ denote a Markov chain with the invariant probability measure $\pi(\cdot)$ where $\mathds{S}$ is a standard Borel space.  We assume that $\{S_t\}$ has geometric ergodicity such that $\|Pr(S_t\in\cdot) - \pi(\cdot)\|_{TV}\leq K\rho^t$ for some $K<\infty$ and some $\rho<1$.  Let $g(s,v)$ be  such that 
\begin{align*}
g(s,v) = \left( k(s)^\intercal v - h(s)  \right)^2
\end{align*}
for some $k:\mathds{S}\to \mathds{R}^d$, $h:\mathds{S}\to \mathds{R}$  and for $v\in\mathds{R}^d$. We assume that $k,h$ are uniformly bounded.  We denote by 
\begin{align*}
&G_t(v)= E[g(S_t,v)]\\
&G(v) = \int g(s,v) \pi(ds).
\end{align*}
Consider the iterations
\begin{align*}
v_{t+1} = v_t - \alpha_t \nabla g(S_t,v_t)
\end{align*}
where the gradient is with respect to $v_t$. If the learning rates are such that $\sum_t\alpha_t=\infty$ and $\sum_t\alpha_t^2<\infty$ with probability one,  we then have that $G_t(v_t) \to \min_vG(v) = G(v^*)$ almost surely.
\end{prop}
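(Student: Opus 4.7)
Since $g(s, v) = (k(s)^\intercal v - h(s))^2$ is quadratic in $v$ with $\nabla g(s, v) = 2k(s)(k(s)^\intercal v - h(s))$, the limit functional $G(v) = \int g(s,v)\pi(ds)$ is a convex quadratic of the form $v^\intercal A v - 2 b^\intercal v + c_0$, where $A = E_\pi[k k^\intercal]$ and $b = E_\pi[h k]$ are both well defined and bounded by hypothesis. I would first treat the strictly convex case $A \succ 0$, in which $v^* = A^{-1} b$ is unique; the degenerate case reduces to the same argument after working with $\mathrm{dist}(v_t, \argmin G)$ in place of $\|v_t - v^*\|$. Rewriting the iteration as a classical Robbins-Monro scheme,
\[
v_{t+1} = v_t - \alpha_t \nabla G(v_t) + \alpha_t \xi_t, \qquad \xi_t := \nabla G(v_t) - \nabla g(s_t, v_t),
\]
puts it in the standard form with deterministic mean field $-\nabla G$ driven by the Markovian perturbation $\xi_t$ (rather than a martingale difference), which is the source of the main technical difficulty.

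\textbf{Convergence of the iterates.} I would invoke the stochastic approximation theory with Markov noise of Benveniste-M\'etivier-Priouret (see also Chapter 6 of Borkar). The geometric ergodicity $\|\Pr(s_t \in \cdot) - \pi\|_{TV} \leq K \rho^t$ implies that, for each fixed $v$ in a bounded set, the Poisson equation $(I - P)\hat{\xi}(\cdot, v) = \xi(\cdot, v)$ has a solution $\hat{\xi}$ with bounds uniform in $v$ on bounded sets. An Abel-summation trick then decomposes $\sum_{t \leq T} \alpha_t \xi_t$ into a martingale (square-summable and hence a.s.\ convergent since $\sum \alpha_t^2 < \infty$) plus a telescoping remainder controlled by $\alpha_t - \alpha_{t+1} = O(1/t^2)$. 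A Lyapunov argument using $V(v) = \|v - v^*\|^2$, the strong-convexity descent $\langle \nabla G(v), v - v^* \rangle \geq \lambda_{\min}(A) \|v - v^*\|^2$, and the Robbins-Siegmund almost-supermartingale lemma then yield both $\sup_t \|v_t\| < \infty$ and $v_t \to v^*$ almost surely.

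\textbf{Passing to $G_t(v_t)$.} With iterate convergence in hand, I would split
\[
|G_t(v_t) - G(v^*)| \leq |G_t(v_t) - G(v_t)| + |G(v_t) - G(v^*)|.
\]
Almost-sure boundedness of $v_t$ combined with the uniform bounds on $k, h$ produces a (random but finite) constant $C$ with $|g(s, v_t)| \leq C$ for all $s, t$ along the trajectory, hence
\[
|G_t(v_t) - G(v_t)| = \Bigl| \int g(s, v_t)\bigl[\Pr(s_t \in ds) - \pi(ds)\bigr] \Bigr| \leq 2 C K \rho^t \longrightarrow 0.
\]
The second term vanishes by continuity of the quadratic $G$ and $v_t \to v^*$, giving $G_t(v_t) \to G(v^*)$ almost surely. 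The main obstacle throughout is the stochastic approximation step under Markovian, rather than martingale-difference, noise: the conditional bias $E[\xi_t \mid v_t, s_{t-1}]$ is non-zero and must be neutralized uniformly in $v_t$ as the iterate wanders early in the algorithm, which is precisely what the Poisson-equation / geometric-ergodicity machinery accomplishes. The remaining ingredients (quadratic structure, Robbins-Monro step sizes, and the boundedness of $k, h$) are then essentially routine.
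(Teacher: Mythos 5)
Your proposal is correct in outline but follows a genuinely different route from the paper. You appeal to the classical stochastic-approximation-with-Markov-noise machinery (Poisson equation, Abel summation into a martingale plus a telescoping remainder, Robbins--Siegmund), obtain $\sup_t\|v_t\|<\infty$ and $v_t\to v^*$ almost surely, and only then pass to $G_t(v_t)$. The paper instead gives a self-contained, elementary two-step argument that never proves convergence of the iterates: it first shows $E\|v_t-v^*\|^2$ stays bounded by expanding the squared recursion, using convexity of $g(s,\cdot)$, and controlling the Markovian bias by comparing $g(s_t,\cdot)$ with $g(\hat s_t,\cdot)$ for stationary copies $\hat s_t\sim\pi$ --- the geometric TV decay together with $\alpha_t=1/t$ makes the resulting error series summable; it then sums the same recursion to conclude $\sum_t\alpha_t\bigl(G(v_t)-G(v^*)\bigr)<\infty$ almost surely and invokes $\sum_t\alpha_t=\infty$ together with nonnegativity of $G(v_t)-G(v^*)$. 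What your route buys is the strictly stronger conclusion $v_t\to v^*$; what it costs is two ingredients the proposition does not grant. First, the descent inequality $\langle\nabla G(v),v-v^*\rangle\ge\lambda_{\min}(A)\|v-v^*\|^2$ needs $A=E_\pi[kk^\intercal]\succ 0$, and the asserted reduction of the degenerate case to ``distance to the argmin'' is not automatic: the per-sample gradients $2k(s_t)(k(s_t)^\intercal v_t-h(s_t))$ can have components outside the range of $A$ during the transient, so boundedness of the trajectory in the flat directions (which your final bound $|g(s,v_t)|\le C$ relies on) needs a separate argument, whereas the paper's bound on $E\|v_t-v^*\|^2$ uses only convexity and so covers this case. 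Second, solving the Poisson equation requires geometric ergodicity of the transition kernel uniformly in the initial state, which is somewhat more than the stated decay of the marginal law of $s_t$; the paper's comparison with stationary copies uses only the marginal decay (and the $1/t$ step size). On the other hand, your final step converting $G(v_t)\to G(v^*)$ into $G_t(v_t)\to G(v^*)$ via $|G_t(v_t)-G(v_t)|\le 2CK\rho^t$ is a clean addition that the paper leaves implicit.
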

\begin{proof}
The proof can be found in Appendix \ref{sgd_proof}.
\end{proof}

\begin{corollary}\label{sgd_inf}
Let Assumption \ref{exp_assmp} and Assumption \ref{pos_expo} hold and let the learning rates be chosen such that $\alpha_t(x,u)=0$ unless $(X_t,U_t)=(x,u)$. Furthermore, $\sum_t \alpha_t(x,u)=\infty$ and $\sum_t \alpha_t^2(x,u)<\infty$ with probability one for all $(x,u)\in\mathds{X}\times\mathds{U}$.
Then, the iterations given in (\ref{ind_learn1}) and (\ref{ind_learn2}) converge with probability 1. Furthermore, the limit points, say ${\bf \theta}^*(x,u)$ and ${\bf Q}^*_{(x,u)}(\cdot)$ are such that 
\begin{align*}
{\bf \theta}^*(x,u)= \argmin_{\theta(x,u)\in \mathds{R}^d} \int \left| c(x,u,\mu) - {\bf \Phi}^\intercal(x,u,\mu) {\bf \theta}(x,u)\right|^2 P_{x}(d\mu)\\
{\bf Q}^{*,j}{(x,u)} = \argmin_{{\bf Q}^j{(x,u)} \in \mathds{R}^{d}}  \int \left|\mathcal{T}(j|x,u,\mu) - {\bf \Phi}_{x,u}^\intercal(\mu)   {\bf Q}^j{(x,u)} \right|^2 P_{x}(d\mu)
\end{align*}
for every $(x,u)$ pair and for every $j$, where ${\bf Q}^{j,*}{(x,u)}$ is the $j$th column of  ${\bf Q}^*_{(x,u)}(\cdot)$. Furthermore, $P_{x}(\cdot)$ denotes the trained set based on the invariant measure of the mean-field flow under the exploration policy with common randomness (see (\ref{learned_set_ind})).
\end{corollary}

\begin{proof}
We define the following stopping times 
\begin{align*}
    \tau_{k+1} = \min{\{t> \tau_k: (X_t,U_t)=(x,u) \}}
\end{align*}
such that $\tau_k$ indicates the $k$-th time the $(x,u)$ pair is visited.

For the iterations (\ref{ind_learn1}), Proposition \ref{sgd} applies such that for each $(x,u)$,  $v_k\equiv \theta_{\tau_k}(x,u)$, $k(\mu) \equiv {\bf \Phi}^\intercal(x,u,\mu)$ and $h(\mu) \equiv c(x,u,\mu)$ and finally the noise process $s_k\equiv \mu_{\tau_k}$. Note that ${\bf \Phi}$ and $c$ are assumed to be uniformly bounded which also agrees with the assumptions in Propositions \ref{sgd}. Furthermore, with the strong Markov property, $\mu_{\tau_k}$ is also a Markov chain which is sampled when the state-action pair is $(x,u)$. Thus, the invariant measure for the sampled process is $P_{x}$  as defined in (\ref{learned_set_ind}).

For the iterations (\ref{ind_learn2}),  Proposition \ref{sgd} applies such that for each $(x,u)$ and each $j$,  $v_t\equiv {\bf Q}^j_{\tau_k}(x,u)$, $k(\mu) \equiv {\bf \Phi}^\intercal(x,u,\mu)$ and $h(\mu) \equiv \mathds{1}_{\{X_{1} =x^j  \}}$. We note that $X_{1} = f(x,u,\mu,w)$ (see (\ref{dynamics})) where $w$ is the i.i.d. noise for the dynamics of agents. Thus, the noise process for iterations (\ref{ind_learn2}) can be taken to be the joint process $(\mu_t,w_t)$ where $\mu_t$ is an ergodic Markov process, and $w_t$ is an i.i.d. process. In particular, for every $(x,u)$ pair and for every $x^j$, if we consider the expectation over $(\mu,w)$ where $\mu\sim P_{x}(\cdot)$,  we get
\begin{align*}
E\left[ \mathds{1}_{\{X_{1} =x^j  \}}\right] = E\left[ \mathds{1}_{\{f(x,u,\mu,w) =x^j  \}}\right] =\int  \mathcal{T}(x^j|x,u,\mu)P_{x}(d\mu)
\end{align*}
for every $x^j\in \mathds{X}$.

The algorithm in (\ref{ind_learn2}) minimizes 
\begin{align*}
 \int \left|\mathds{1}_{\{f(x,u,\mu,w) =x^j  \}}- {\bf \Phi}_{x,u}^\intercal(\mu)   {\bf Q}^j ({x,u})  \right|^2 P_{x}(d\mu)P_w(dw)
\end{align*}
for each $j$ where $P_w$ is the distribution of the noise term. We can then open up the above term to write:
\begin{align*}
     &\argmin_{{\bf Q}^j(x,u)}\int \left|\mathds{1}_{\{f(x,u,\mu,w) =x^j  \}}- {\bf \Phi}_{x,u}^\intercal(\mu)   {\bf Q}^j({x,u}) \right|^2 P_{x}(d\mu)P_w(dw)\\
       &=\argmin_{{\bf Q}^j(x,u)}\int (\mathds{1}_{\{f(x,u,\mu,w) =x^j  \}})^2 - 2\mathds{1}_{\{f(x,u,\mu,w) =x^j  \}} {\bf \Phi}_{x,u}^\intercal(\mu)   {\bf Q}^j({x,u})  \\
       &\qquad\qquad \qquad + ({\bf \Phi}_{x,u}^\intercal(\mu)   {\bf Q}^j({x,u}) )^2 P_{x}(d\mu)P_w(dw)\\
       &=\argmin_{{\bf Q}^j(x,u)}\int  - 2\mathds{1}_{\{f(x,u,\mu,w) =x^j  \}} {\bf \Phi}_{x,u}^\intercal(\mu)   {\bf Q}^j({x,u}) + ({\bf \Phi}_{x,u}^\intercal(\mu)   {\bf Q}^j({x,u}) )^2 P_{x}(d\mu)P_w(dw)\\
        &=\argmin_{{\bf Q}^j(x,u)}\int  - 2  \mathcal{T}(x^j|x,u,\mu) {\bf \Phi}_{x,u}^\intercal(\mu)   {\bf Q}^j({x,u}) + ({\bf \Phi}_{x,u}^\intercal(\mu)   {\bf Q}^j({x,u}) )^2 P_{x}(d\mu)\\
        & =\argmin_{{\bf Q}^j(x,u)}\int  (\mathcal{T}(x^j|x,u,\mu))^2   - 2  \mathcal{T}(x^j|x,u,\mu) {\bf \Phi}_{x,u}^\intercal(\mu)   {\bf Q}^j({x,u}) + ({\bf \Phi}_{x,u}^\intercal(\mu)   {\bf Q}^j({x,u}) )^2 P_{x}(d\mu)\\
        &=\argmin_{{\bf Q}^j(x,u)}\int  \left(\mathcal{T}(x^j|x,u,\mu)) -  {\bf \Phi}_{x,u}^\intercal(\mu)   {\bf Q}^j({x,u})  \right)^2P_{x}(d\mu).
\end{align*}
Hence, the algorithm minimizes 
\begin{align*}
  \int  \left(\mathcal{T}(x^j|x,u,\mu)) -  {\bf \Phi}_{x,u}^\intercal(\mu)   {\bf Q}^j({x,u})  \right)^2P_{x}(d\mu)
\end{align*}
for each $j$.
\end{proof}

\subsection{Learning for Finitely Many Players}\label{ind_learn_fin}  In this section, we will study the more realistic scenario in which the number of agents is large but finite. The learning methods presented in the previous sections have focused on the ideal case where the system has infinitely many players. Although the setting with the infinitely many agents helps us to fix the ideas for the learning in the mean-field control setup, we should note that it is only an artificial setup, and the infinite population setup is only used as an approximation for large population control problems. Hence, we need to study the  actual setup for which the limit problem is argued to be a well approximation, that is the problem with very large but finitely many agents. 

We will apply the independent learning algorithm presented for the infinite population case, and study the performance of the learned solutions for the finitely many player setting. In particular, we will assume that the agents follow the iterations given in (\ref{ind_learn1}) and (\ref{ind_learn2}). We note, however, that the agents will not need to use common randomness during exploration as the flow of the mean-field term is stochastic for finite populations without common randomness. The method remains valid under common randomness as well; in fact, the common randomness, in general, encourages the exploration of the state space. The method is identical to the one presented in Section \ref{ind_learn_inf}. However, we present the method again since it has some subtle differences. 

At every time step $t$, agent $i$ performs the following steps:
\begin{itemize}
\item Pick an action such that $u^i_t\sim \gamma^i(\cdot|x^i_t)$
\item Collect $x^i_t,u^i_t,x^i_{t+1},\mu^N_t ,c$ where $c=c(x^i_t,u^i_t,\mu^N_t)$ and $\mu_t^N=\mu_{\bf x_t}$
\item For all $(x,u)\in\mathds{X}\times\mathds{U}$
\begin{align}\label{finite_ind1}
{\bf \theta}_{t+1}(x,u) = {\bf \theta}_t(x,u) + \alpha_t(x,u){\bf \Phi}_{x,u}(\mu^N_t)\left[c-{\bf \Phi}^\intercal_{x,u}(\mu^N_t) {\bf \theta}_t(x,u)\right]
\end{align}
\item  Denoting by ${\bf Q}^j_t(x,u)$ the vector values of  ${\bf Q}_t(x^j|x,u)$ for all $x^j\in \mathds{X}$ where $j\in\{1\dots,|\mathds{X}|\}$. For all $(x,u)$ and $j\in\{1\dots,|\mathds{X}|\}$
\begin{align}\label{finite_ind2}
{\bf Q}_{t+1}^j(x,u) = {\bf Q}_t^j(x,u) + \alpha_t(x,u) {\bf \Phi}_{x,u}(\mu^N_t)\left[\mathds{1}_{\{x^i_{t+1}=x^j\}} - {\bf \Phi}_{x,u}^\intercal(\mu^N_t) {\bf Q}_t^j(x,u) \right]
\end{align}
\end{itemize}

\begin{remark}
We note that the iterates $\theta_t$ and ${\bf Q}_t$ depend on the agent identity $i$, in this case, as each agent can learn the model independently. Moreover, the learning rates $\alpha_t(x,u)$ and the basis functions ${\bf\Phi}_{x,u}$ might depend on the agent identity as well. However, we omit the dependence in the notation to reduce notational clutter. 
\end{remark}

\begin{assumption}\label{finite_erg}
Under the exploration team policy ${\bf \gamma}(\cdot|{\bf x}) =\left[ \gamma^1(\cdot|x^1),\dots , \gamma^N(\cdot|x^N)\right]^\intercal$,  the state vector process ${\bf x_t}=[x_t^1,\dots,x_t^N]$  of the agents is irreducible and aperiodic and in particular admits a unique invariant measure, and thus the mean-field flow $\mu^N_{t}= \mu_{\bf x_t}$ admits a unique invariant measure, say $P^N(\cdot)\in \P_N(\mathds{X})$, as well.
\end{assumption}
\begin{remark}
We note that a sufficient condition for the above assumption to hold is that there exists some $x'\in\mathds{X}$ such that $\mathcal{T}(x'|x,u,\mu^N)\geq\epsilon>0$ for any $x,u$ and for any $\mu^N$. In particular, this implies that 
\begin{align*}
Pr\left({\bf X}_{t+1}=[x',\dots,x']|{\bf x}_t,\gamma  \right)=\prod_{i=1}^N \sum_u\mathcal{T}(x'|x_t^i,u,\mu_t^N)\gamma^i(u|x^i_t)\geq \epsilon^N>0
\end{align*}
and thus \cite[p 56, 3.1.1]{hernandez2012adaptive} implies that the process ${\bf X}_t$ is geometrically ergodic.
\end{remark}

The next result shows the convergence of the algorithm. Similar to the previous section, we first define the trained sets of mean-field terms for every $(x,u)$ pair using the stationary distribution of the mean-field terms. We assume that Assumption \ref{pos_expo} holds for every policy $\gamma^i$ such that $\gamma^i(u|x)>0$ for every $(x,u)$ pair. Denoting the invariant distribution of the joint state process by $P(\bf x)$, for some  $\mu^N\in\P_N(\mathds{X})$, for agent $i$, we can write
\begin{align}\label{learned_model_fin}
     P(\mu^N|(x^i,u^i)=(x,u)) & =\int_{{\bf x}\in\mathds{X}^N}Pr(\mu^N|{\bf x})P({\bf x}|(x^i,u^i)=(x,u))\nonumber\\
    &= \int_{{\bf x}\in\mathds{X}^N} \mathds{1}_{\{\mu^N=\mu_{\bf x} \}} \frac{\gamma^i(u|x)\mathds{1}_{\{ x={\bf x}[i] \}}   }{P(x^i=x,u^i=u)} P({\bf x})\nonumber\\
    &= \int_{{\bf x}\in\mathds{X}^N} \mathds{1}_{\{\mu^N=\mu_{\bf x} \}} \frac{\gamma^i(u|x)\mathds{1}_{\{ x={\bf x}[i] \}} }{\gamma^i(u|x)P(x^i=x) } P({\bf x})\nonumber\\
    & =\int_{{\bf x}\in\mathds{X}^N} \mathds{1}_{\{\mu^N=\mu_{\bf x} \}} \frac{\mathds{1}_{\{ x={\bf x}[i] \}} }{P(x^i=x) } P({\bf x})\nonumber\\
    &=P(\mu^N|x^i=x)=:P^{i}_x(\mu^N).
\end{align}
Note that the trained measure of mean-fields is independent of the control actions; however, it does depend on the agent identity as the agents follow distinct exploration policies.

\begin{corollary}
Let Assumption \ref{finite_erg} hold, and let Assumption \ref{pos_expo} hold for each policy $\gamma^i$. Assume further that the learning rates of every agent satisfy the assumption of Corollary \ref{sgd_inf} such that $\sum_t \alpha_t(x,u)=\infty$ and $\sum_t \alpha_t^2(x,u)<\infty$ with probability one for all $(x,u)\in\mathds{X}\times\mathds{U}$. Then, the iterations given in (\ref{finite_ind1}) and (\ref{finite_ind2}) converge with probability one. Furthermore, for agent $i$, the limit points, say ${\bf \theta}^*_{(x,u)}$ and ${\bf Q}^*_{(x,u)}(\cdot)$ are such that 
\begin{align*}
{\bf \theta}^*(x,u)= \argmin_{\theta(x,u)\in \mathds{R}^d} \int \left| c(x,u,\mu) - {\bf \Phi}^\intercal(x,u,\mu) {\bf \theta}(x,u)\right|^2 P^i_{x}(d\mu)\\
{\bf Q}^{*,j}{(x,u)} = \argmin_{{\bf Q}^j{(x,u)} \in \mathds{R}^{d}}  \int \left|\mathcal{T}(j|x,u,\mu) - {\bf \Phi}_{x,u}^\intercal(\mu)   {\bf Q}^j{(x,u)} \right|^2 P^i_{x}(d\mu)
\end{align*}
for every $(x,u)$ pair and for every $j$, where ${\bf Q}^{j,*}{(x,u)}$ is the $j$th column of  ${\bf Q}^*_{(x,u)}(\cdot)$. Furthermore, $P_{x}(\cdot)$ denotes the trained set based on the invariant measure of the mean-field flow under the exploration policy with common randomness (see (\ref{learned_model_fin})).
\end{corollary}

\begin{proof}
The proof is identical to the proof of Corollary \ref{sgd_inf}, and is an application of Proposition \ref{sgd}. The only difference is the ergodicity of the mean-field process $\mu_t^N$, which does not require common randomness for exploration policies.
\end{proof}

\section{Uniform Error Bounds for Model Approximation}\label{unif_err_est}
The learning methods we have presented in Section \ref{learning_sec} minimize the $L_2$ distance between the true model and the linear approximate model, under the probability measure induced by the training data. In particular, denoting the learned parameters for a fixed pair $(x,u)\in\mathds{X}\times\mathds{U}$ by $\theta^*_{(x,u)}$, and ${\bf Q}_{(x,u)}^*(\cdot)$, we have that
 \begin{align}\label{lin_least_square}
&{\bf \theta}^*_{(x,u)}= \argmin_{\theta\in\mathds{R}^d} \int \left| c(x,u,\mu) - {\bf \Phi}_{(x,u)}^\intercal(\mu) {\bf \theta}_{(x,u)}\right|^2 P_{x}(d\mu)\nonumber\\
&{\bf Q}^{*,j}{(x,u)} = \argmin_{{\bf Q}^j{(x,u)} \in \mathds{R}^{d}}  \int \left|\mathcal{T}(j|x,u,\mu) - {\bf \Phi}_{(x,u)}^\intercal(\mu)   {\bf Q}^j{(x,u)} \right|^2 P_{x}(d\mu)
\end{align}
for some probability measure $P_{x}(\cdot)\in \P(\mathds{X})$. The measure $P_{x}(\cdot)$ depends on the learning method used. 

\begin{itemize}
\item  For the coordinated learning methods presented in Section \ref{coor_learn_sec}, $P_{x}(\cdot)$ represents the empirical distribution of the mean-field terms in the training data for which the state $x$ has positive measure (see (\ref{learned_set})).
 \item For the individual learning method presented in Section \ref{ind_learn_inf} for infinite populations, $P_{x}(\cdot)$ represents the invariant measure of the mean-field flow under the randomized exploration policies given the state $x$ is observed. See (\ref{learned_set_ind}).
\item Finally, for the individual learning method for finite populations in Section \ref{ind_learn_fin}, $P_{x}$ depends on the agent identity $i$, and thus denoted by $P^i_x$. Similar to the infinite population setting, it represents the invariant measure of the process $\mu_{\bf x_t}$ conditioned on the event $(x^i=x)$,  for agent $i$ where ${\bf x_t}$ is the $N$ dimensional vector state of the team of $N$ agents. We note that each agent might have different trained sets of mean-field terms in this setting, since the policies may be distinct. 
\end{itemize} 

 When the learned policy is executed, the flow of the mean-field is not guaranteed to stay in the support of the training measure $P_{x}(\cdot)$.  Hence, in what follows, we aim to generalize the $L_2$ performance of the learned models over the space $\P(\mathds{X})$.

In what follows, we will sometimes refer to $P_{x}(\cdot)$ as the {\it training measure}.

\subsection{Ideal Case: Perfectly Linear Model}\label{lin_model} If the cost and the kernel are fully linear for a given set of basis functions ${\bf \Phi}_{(x,u)}(\mu) = [\Phi^1_{(x,u)}(\mu),\dots \Phi^d_{(x,u)}(\mu)]^\intercal$ then the linear model can be learned perfectly. That is, for the given basis functions ${\bf \Phi}_{(x,u)}(\mu) $, there exist ${\bf \theta}^*_{(x,u)}$ and ${\bf Q}^*_{(x,u)}(\cdot)$ such that 
\begin{align*}
&c(x,u,\mu) = {\bf \Phi}_{(x,u)}^\intercal(\mu)\theta^*_{(x,u)}\\
&\mathcal{T}(\cdot|x,u,\mu)= {\bf \Phi}_{(x,u)}^\intercal(\mu) {\bf Q}^*_{(x,u)}(\cdot)
\end{align*}
The model can be learned perfectly with a coordinator under the method presented in Section \ref{coor_learn_sec} if
\begin{itemize}
\item the training set $T$ is such that for each pair $(x,u)$, there exist at least $d$ different data points. Furthermore, for a given data point of the form $(x^i,u^i,X_1^i,\mu,c^i)_{i=1}^\infty$, the  state-action distribution for this point is such that $Pr(x,u)>0$
\item and if the basis functions ${\bf \Phi}_{(x,u)}(\mu)$ and  ${\bf \Phi}_{(x,u)}(\mu')$ are  linearly independent for every $\mu\neq\mu'$ that is if ${\bf A}_{x,u}$ (see (\ref{lin_fit2})) has independent columns.
\end{itemize}

For the independent learning methods given in Section \ref{ind_learn_inf} and Section \ref{ind_learn_fin}, the learned model will be the true model with no error if the iterations converge.

\subsection{Nearly Linear Models}
In this section, we provide a result that states that if the true model can be approximated $\epsilon$ close to a linear model, then the models learned with the least square method can approximate the true model uniformly in the order of $\epsilon$ if the training set is informative enough.

The following assumption states that the true model is nearly linear.
\begin{assumption}\label{mis_ass}
We assume the existence of $\bar{\theta}_{x,u}\in \mathds{R}^d$ and $\bar{\bf Q}_{x,u}(\cdot) \in \mathds{R}^{d\times |\mathds{X}|}$ with the following property: denoting by $\bar{\bf Q}^j(x,u) \in\mathds{R}^d $ the $j$th column of  $\bar{\bf Q}_{x,u}(\cdot)$, for some $\epsilon>0$, and $\epsilon_j>0$ 
\begin{align}\label{lin_mis}
&\sup_{x,u,\mu}\left|\mathcal{T}(j|x,u,\mu) - {\bf \Phi}_{x,u}^\intercal(\mu)   \bar{\bf Q}^j{(x,u)}  \right|\leq \epsilon_j\nonumber\\
&\sup_{x,u,\mu}\left| c(x,u,\mu)- {\bf \Phi}_{x,u}^\intercal(\mu)   \bar{\bf \theta}_{x,u}(\cdot)  \right|\leq \epsilon.
\end{align}
In particular, further assuming  $\sum_j \epsilon_j \leq \epsilon$, the above implies that
\begin{align*}
&\left\|\mathcal{T}(\cdot|x,u,\mu) - {\bf \Phi}_{x,u}^\intercal(\mu)   \bar{\bf Q}_{(x,u)}(\cdot)  \right\|\leq \frac{\epsilon}{2}
\end{align*}
for all $x,u,\mu$.
\end{assumption}



We note that, in general, there is no guarantee that the learned dynamics constitute a proper stochastic kernel. This can be guaranteed when the model is fully linear as discussed in Section \ref{lin_model} or when we consider a discretization based approximation as described in Section \ref{disc_model}. However, for general linear approximations, as in this section, we project the learned model ${\bf \Phi}_{(x,u)}^\intercal(\mu){\bf Q}^*_{(x,u)}$ onto the set of probability measures $P(\mathds{X})$, i.e. the simplex over $\mathds{X}$.

In particular, we use the following notation:
\begin{align}\label{est_model}
    &\hat{c}(x,u,\mu) := {\bf \Phi}^\intercal_{x,u}(\mu)\theta^*_{(x,u)} \nonumber\\
    &\hat{\mathcal{T}}(\cdot|x,u,\mu):=\argmin_{\mu\in\P(\mathds{X})}\|\mu-{\bf \Phi}^\intercal_{x,u}(\mu){\bf Q}^*_{(x,u)}(\cdot) \|
\end{align}
where $\theta^*_{(x,u)}$ and ${\bf Q}^*_{(x,u)}(\cdot)$ denote the learned models based on the least square method, see (\ref{lin_least_square}).

\begin{prop}
Let $P_{x}(\cdot) \subset \P(\P(\mathds{X}))$ denote the training distribution of the mean-field terms for the state $x\in\mathds{X}$. Let Assumption \ref{mis_ass} hold.  For the estimate models, $\hat{c}(x,u,\mu)$ and $\hat{\mathcal{T}}(\cdot|x,u,\mu)$ defined based on the least square method (see (\ref{est_model})), we have that
\begin{align*}
&\left| c(x,u,\mu) - \hat{c}(x,u,\mu)  \right| \leq   \epsilon\left(1+\frac{2\sqrt{d}}{\sqrt{\lambda_{\min}}}\right)\\
&\left\| \mathcal{T}(\cdot|x,u,\mu) - \hat{\mathcal{T}}(\cdot|x,u,\mu) \right\| \leq  \epsilon\left(1+\frac{2\sqrt{d}}{\sqrt{\lambda_{\min}}}\right)
\end{align*}
where $\lambda_{min}$ is the minimum eigenvalue of $\int {\bf \phi}_{(x,u)}(\mu){\bf \phi}_{(x,u)}^\intercal (\mu)P_x(d\mu)$.
\end{prop}

\begin{proof}
We first note that since the learned $\theta^*_{(x,u)}$ minimizes the $L_2$ distance to the true model under the training measure $P_{x}$,
(\ref{lin_mis}) implies that 
\begin{align*}
& \int_{\P(\mathds{X})} \left| c(x,u,\mu) - {\bf \Phi}_{(x,u)}^\intercal(\mu) {\bf \theta^*_{(x,u)}}\right|^2 P_{x}(d\mu) \leq & \int_{\P(\mathds{X})} \left| c(x,u,\mu) - {\bf \Phi}_{(x,u)}^\intercal(\mu) {\bf \bar{\theta}_{(x,u)}}\right|^2 P_{x}(d\mu)\leq \epsilon^2.
\end{align*}
In particular, via the triangle inequality (under the $L_2$ norm) we also have that
\begin{align*}
 &\int_{\P(\mathds{X})} \left|  {\bf \Phi}_{(x,u)}^\intercal(\mu) {\bf \bar{\theta}}_{(x,u)} -  {\bf \Phi}_{(x,u)}^\intercal(\mu) {\bf \theta^*_{(x,u)}}\right|^2 P_{x}(d\mu) \leq 4 \epsilon^2.
\end{align*}
We can further write that 
\begin{align*}
 &\int_{\P(\mathds{X})} \left|  {\bf \Phi}_{(x,u)}^\intercal(\mu) {\bf \bar{\theta}}_{(x,u)} -  {\bf \Phi}_{(x,u)}^\intercal(\mu) {\bf \theta^*_{(x,u)}}\right|^2 P_{x}(d\mu) \\
 &= \int_{\P(\mathds{X})} \left|  {\bf \Phi}_{(x,u)}^\intercal(\mu) \left({\bf \bar{\theta}}_{(x,u)} - \theta^*_{(x,u)}\right) \right|^2 P_{x}(d\mu) \\
 &=  \left({\bf \bar{\theta}}_{(x,u)} - \theta^*_{(x,u)}\right)^\intercal  \int_{\P(\mathds{X})}   {\bf \Phi}_{(x,u)}(\mu) {\bf \Phi}^\intercal_{(x,u)}(\mu) P_{x}(d\mu)  \left({\bf \bar{\theta}}_{(x,u)} - \theta^*_{(x,u)}\right)\\
 &\geq \left\|{\bf \bar{\theta}}_{(x,u)} - \theta^*_{(x,u)}\right\|_2^2 \lambda_{\min}
\end{align*}
where $\lambda_{min}$ is the minimum eigenvalue of $\int {\bf \phi}_{(x,u)}(\mu){\bf \phi}_{(x,u)}^\intercal (\mu)P_x(d\mu)$. Thus, we have that 
\begin{align*}
\left\|{\bf \bar{\theta}}_{(x,u)} - \theta^*_{(x,u)}\right\|_2 \leq \frac{2\epsilon}{\sqrt{\lambda_{\min}}}.
\end{align*}
Finally, using the triangle inequality with the fact that $\hat{c}(x,u,\mu) = {\bf \Phi}^\intercal_{x,u}(\mu)\theta^*_{(x,u)} $
\begin{align*}
&\left| c(x,u,\mu) - \hat{c}(x,u,\mu)  \right| \leq \left| c(x,u,\mu) - {\bf \Phi}_{(x,u)}^\intercal(\mu) {\bf \bar{\theta}}_{(x,u)} \right| + \left| {\bf \Phi}_{(x,u)}^\intercal(\mu) {\bf \bar{\theta}}_{(x,u)} - {\bf \Phi}^\intercal_{x,u}(\mu)\theta^*_{(x,u)}  \right|\\
&\leq \epsilon +  \|{\bf \Phi}_{(x,u)}(\mu) \|_2 \left\|{\bf \bar{\theta}}_{(x,u)} - \theta^*_{(x,u)}\right\|_2 \leq \epsilon + \frac{2\epsilon\sqrt{d}}{\sqrt{\lambda_{\min}}}
\end{align*}
where we used $\|{\bf \Phi}_{(x,u)}(\mu) \|_2 \leq \sqrt{d}$ since we assume that $
\|\Phi^j_{(x,u)}(\cdot)\|_\infty\leq 1$ via Assumption \ref{bdd_assmp}.

For the proof of the error bound of the estimate kernel $\hat{\mathcal{T}}(\cdot|x,u,\mu)$ we follow identical steps, recalling that by construction $  {\bf \Phi}_{(x,u)}^\intercal(\mu){\bf Q}^{*,j}{(x,u)}$ minimizes the $L_2$ distance to $\mathcal{T}(j|x,u,\mu)$ and using Assumption \ref{mis_ass}, we write that
\begin{align*}
 \lambda_{\min} \| {\bf \bar{Q}}^j{(x,u)} -  {\bf Q}^{*,j}{(x,u)}\|^2_2 \leq \int \left| {\bf \Phi^\intercal_{(x,u)}}(\mu) {\bf \bar{Q}}^j{(x,u)} -  {\bf \Phi}_{(x,u)}^\intercal(\mu) {\bf Q}^{*,j}{(x,u)}\right|^2 P_x(d\mu)\leq 4 \epsilon_j^2,
\end{align*}
which yields 
\begin{align*}
 \| {\bf \bar{Q}}^j{(x,u)} -  {\bf Q}^{*,j}{(x,u)}\|_2\leq \frac{2\epsilon_j}{\sqrt{\lambda_{\min}}}
\end{align*}
where $\lambda_{min}$ is the minimum eigenvalue of $\int {\bf \phi}_{(x,u)}(\mu){\bf \phi}_{(x,u)}^\intercal (\mu)P_x(d\mu)$. 

Since $\hat{\mathcal{T}}(\cdot|x,u,\mu)$ is the projection of ${\bf \Phi}_{x,u}^\intercal(\mu) {\bf Q}^*_{(x,u)}(\cdot)$ onto the space of probability measures, we have, by the definition of the projection, that
\begin{align*}
\|\hat{\mathcal{T}}(\cdot|x,u,\mu) - {\bf \Phi}_{x,u}^\intercal(\mu) {\bf Q}^*_{(x,u)}(\cdot)\|&\leq \|{\mathcal{T}}(\cdot|x,u,\mu) - {\bf \Phi}_{x,u}^\intercal(\mu) {\bf Q}^*_{(x,u)}(\cdot)\|.
\end{align*}
We then write the following using the triangle inequality
\begin{align*}
&\left\| \mathcal{T}(\cdot|x,u,\mu)-\hat{\mathcal{T}}(\cdot|x,u,\mu)\right\| \\ 
&\leq \left\| \mathcal{T}(\cdot|x,u,\mu)- {\bf \Phi}_{x,u}^\intercal(\mu) {\bf Q}^*_{(x,u)}(\cdot)\right\| + \left\|  {\bf \Phi}_{x,u}^\intercal(\mu) {\bf Q}^*_{(x,u)}(\cdot) - \hat{\mathcal{T}}(\cdot|x,u,\mu)\right\|\\
& \leq 2 \|{\mathcal{T}}(\cdot|x,u,\mu) - {\bf \Phi}_{x,u}^\intercal(\mu) {\bf Q}^*_{(x,u)}(\cdot)\|\\
&\leq 2 \left \|{\mathcal{T}}(\cdot|x,u,\mu) - {\bf \Phi}_{x,u}^\intercal(\mu) \bar{\bf Q}_{(x,u)}(\cdot) \right\| + 2 \left\|  {\bf \Phi}_{x,u}^\intercal(\mu) \bar{\bf Q}_{(x,u)}(\cdot) -  {\bf \Phi}_{x,u}^\intercal(\mu) {\bf Q}^*_{(x,u)}(\cdot)\right\|\\
&\leq \epsilon +  \sum_j \left|  {\bf \Phi}_{x,u}^\intercal(\mu) \bar{\bf Q}^j{(x,u)} -  {\bf \Phi}_{x,u}^\intercal(\mu) {\bf Q}^{*,j}{(x,u)}\right|\\
&\leq \epsilon +  \sum_j \| {\bf \Phi}_{x,u}^\intercal(\mu) \|_2  \| {\bf \bar{Q}}^j{(x,u)} -  {\bf Q}^{*,j}{(x,u)}\|_2\\
&\leq \epsilon +  \frac{\sqrt{d}}{\sqrt{\lambda_{\min}}} \sum_j {\epsilon_j} \leq \epsilon + 2 \frac{\sqrt{d} \epsilon}{\sqrt{\lambda_{\min}}}
\end{align*}
where we used $\sum_j\epsilon_j\leq \epsilon$ with Assumption \ref{mis_ass}.
\end{proof}

\subsection{A Special Case: Linear Approximation via Discretization}\label{disc_model}
 In this section, we show that the discretization of the space $\P(\mathds{X})$ can be seen as a particular case of linear function approximation with a special class of basis functions. In particular, for this case, we can analyze the error bounds of the learned policy with mild conditions on the model.

Let $\{B_i\}_{i=1}^d\subset \P(\mathds{X})$ be a disjoint set of quantization bins of $\P(\mathds{X})$ such that $\cup B_i = \P(\mathds{X})$. We define the basis functions for the linear approximation such that  
\begin{align*}
\Phi^i_{x,u}(\cdot) = \mathds{1}_{B_i}(\cdot)
\end{align*}
for all $(x,u)$ pairs. Note that in general the quantization bins, $B_i$'s, can be chosen differently for every $(x,u)$; for the simplicity of the analysis, we will work with a discretization scheme which is the same for every $(x,u)$. An important property of the discretization is that the basis functions form an orthonormal basis for any training measure $P(\cdot)$  with $P(B_i)>0$ for each quantization bin $B_i$. That  is 
\begin{align*}
\int \big\langle \Phi^i_{x,u}(\mu), \Phi^j_{x,u}(\mu)\big\rangle P(d\mu) = \mathds{1}_{\{i=j\}}
\end{align*}
for every $(x,u)$ pair. This property allows us to analyze the uniform error bounds of the discretization method more directly.

The linear fitted model (see (\ref{lin_least_square}) with the chosen basis functions becomes
\begin{align}\label{quant_coef}
\theta^i_{(x,u)} &=   \frac{\int_{B_i}c(x,u,\mu) P(d\mu)  }{P(B_i)}\nonumber\\
Q^i_{(x,u)}(\cdot) &= \frac{\int_{B_i} \mathcal{T}(\cdot|x,u,\mu)P(d\mu) }{P(B_i)} 
\end{align} 
In words, the learned coefficients are the averages of  cost and transition realizations from the training set of the corresponding quantization bin. 

The following then is an immediate result of Assumption \ref{main_assmp}.
\begin{prop}
Let $\theta_{x,u} = [\theta^1_{x,u} , \dots, \theta^d_{x,u}]^\intercal$ and ${\bf Q}_{x,u}(\cdot) = \left[ Q^1_{x,u}(\cdot) ,\dots, Q^d_{x,u}(\cdot)\right]^\intercal$ be given by (\ref{quant_coef}). If the training measure $P(\cdot)$ is such that $P(B_i)>0$ for each quantization bin $B_i$, under Assumption \ref{main_assmp}, we then have that
\begin{align*}
\left| c(x,u,\mu) - {\bf \Phi}^\intercal_{x,u}(\mu) {\bf \theta}_{x,u}\right| \leq K_c L\\
 \left\| \mathcal{T} (\cdot |x,u,\mu) - {\bf \Phi}^\intercal_{x,u}(\mu) {\bf Q}_{x,u}\right\| \leq K_f L
\end{align*}
where $L$ is the largest diameter of the quantizations bins such that
\begin{align*}
L = \max_i \sup_{\mu,\mu' \in B_i} \|\mu-\mu'\|.
\end{align*}
\end{prop}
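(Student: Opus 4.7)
The plan is to exploit the orthogonality structure of the indicator basis: since the $B_i$ form a disjoint partition of $\P(\mathds{X})$, for any fixed $\mu \in \P(\mathds{X})$ there is a unique index $i(\mu)$ with $\mu \in B_{i(\mu)}$, and hence $\Phi^j_{x,u}(\mu) = \mathds{1}_{\{j = i(\mu)\}}$. This reduces ${\bf \Phi}^\intercal_{x,u}(\mu)\,{\bf \theta}_{x,u}$ and ${\bf \Phi}^\intercal_{x,u}(\mu)\,{\bf Q}_{x,u}(\cdot)$ to the single coefficient associated with the bin containing $\mu$.

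First I would substitute the closed-form expressions (\ref{quant_coef}) to write
\[
{\bf \Phi}^\intercal_{x,u}(\mu)\,{\bf \theta}_{x,u} \;=\; \frac{1}{P(B_{i(\mu)})}\int_{B_{i(\mu)}} c(x,u,\mu')\,P(d\mu'),
\]
and likewise for ${\bf Q}_{x,u}$. Then, pulling $c(x,u,\mu)$ inside the normalized average as a constant (in $\mu'$), I would bound
\[
\bigl| c(x,u,\mu) - {\bf \Phi}^\intercal_{x,u}(\mu)\,{\bf \theta}_{x,u} \bigr|
\;\le\; \frac{1}{P(B_{i(\mu)})} \int_{B_{i(\mu)}} \bigl|c(x,u,\mu) - c(x,u,\mu')\bigr|\,P(d\mu').
\]
Applying Assumption \ref{main_assmp}(ii) pointwise inside the integrand yields $|c(x,u,\mu) - c(x,u,\mu')| \le K_c \|\mu - \mu'\|$, and since both $\mu$ and $\mu'$ lie in the same bin $B_{i(\mu)}$, we have $\|\mu - \mu'\| \le L$ by definition of $L$. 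The resulting uniform bound $K_c L$ then comes out after the normalization cancels.

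The transition kernel bound follows the same template: write
\[
\bigl\| \mathcal{T}(\cdot | x,u,\mu) - {\bf \Phi}^\intercal_{x,u}(\mu)\,{\bf Q}_{x,u}(\cdot)\bigr\|
\;\le\; \frac{1}{P(B_{i(\mu)})} \int_{B_{i(\mu)}} \bigl\|\mathcal{T}(\cdot|x,u,\mu) - \mathcal{T}(\cdot|x,u,\mu')\bigr\|\,P(d\mu'),
\]
using that the total-variation (equivalently $W_1$) norm is convex in its arguments so that the norm passes inside the average. Then Assumption \ref{main_assmp}(i) gives the pointwise bound $K_f \|\mu-\mu'\| \le K_f L$. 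The only technicality is the convexity/Jensen step for the TV norm, but this is standard for seminorms, so there is no real obstacle in the argument; the assumption $P(B_i) > 0$ only enters to ensure the averages are well defined.
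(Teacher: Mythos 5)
Your proof is correct and is exactly the argument the paper has in mind: the paper states this proposition without proof as ``an immediate result of Assumption \ref{main_assmp},'' and your filling-in --- reducing ${\bf \Phi}^\intercal_{x,u}(\mu)\,{\bf \theta}_{x,u}$ to the single bin average $\theta^{i(\mu)}_{x,u}$ via disjointness of the $B_i$, pulling $c(x,u,\mu)$ (resp.\ $\mathcal{T}(\cdot|x,u,\mu)$) into the normalized integral, and applying the Lipschitz bounds together with $\|\mu-\mu'\|\leq L$ within a bin --- is the intended route. The Jensen/triangle-inequality step for the total variation norm of an integral is indeed standard, so there is no gap.
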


\section{Error Analysis for Control with Misspecified Models}\label{perf_loss}
In the previous section, we have studied the uniform mismatch bounds of the learned models. 
In this section, we will focus on what happens if the controllers designed for the linear estimates are used for the true dynamics. We will provide error bounds for  the performance loss of the control designed for a possibly missepecified model.

We will analyze the infinite population and the finite population settings separately. We note that some of the following results (e.g. Lemma \ref{finite_inf_dif}) have been studied in the literature to establish the connection between the $N$-agent control problems and the limit mean-field control problem without the model mismatch aspect. That is, existing results study
what happens if one uses the infinite population solution for the finite population control problem with perfectly known dynamics (see e.g. \cite{motte2022quantitative,bauerle2021mean,motte2022mean}). However, we present the proof of every result for completeness and because of the connections in the analysis we follow throughout the paper. Furthermore, the existing results are often stated under slightly different assumptions and settings such as being stated only for closed loop policies, or only for policies that are open loop in the sense that they are measurable with respect to the noise process.

\subsection{Error Bounds for Infinitely Many Agents} 
As we have observed in Example \ref{dec_counter}, even when agents agree on the model knowledge, without coordination on which policy to follow, the optimality may not be achieved. Therefore, we assume that after the learning period, the team of agents collectively agrees on the cost and transition models given by $\hat{c}(x,u,\mu)$ and $\hat{\mathcal{T}}(\cdot|x,u,\mu)$ and
designs policies for this model. We will assume that 
\begin{align}\label{miss}
&\left|c(x,u,\mu)-\hat{c}(x,u,\mu)\right| \leq \lambda\nonumber\\
& \left\|\mathcal{T}(\cdot|x,u,\mu) - \hat{\mathcal{T}}(\cdot|x,u,\mu) \right\| \leq \lambda
\end{align}
for some $\lambda<\infty$ and for all $x,u,\mu$. That is $\lambda$ represents the uniform model mismatch constant.

We will consider two different cases for the execution of the designed control.
\begin{itemize}
\item {\it Closed loop control:} The team decides on a policy $\hat{g}:\P(\mathds{X}) \to \Gamma$, and uses their local states and the mean-field term to apply the policy $\hat{g}$. That is, an agent $i$ observes the mean-field term $\mu_t$, chooses $\hat{g}(\mu_t) = \hat{\gamma}(\cdot|x^i,\mu_t)$ and applies their control action according to $\hat{\gamma}(\cdot|x^i,\mu_t)$ with the local state $x^i$. The important distinction is that the mean-field term $\mu_t$ is observed by every agent, and they decide on their agent-level policies with the observed mean-field term. Hence, we refer to this execution method to be the closed loop method since the mean-field term is given as a feedback variable.
\item {\it Open loop control:} We have argued earlier that the flow of the mean-field term $\mu_t$ is deterministic for the infinitely many agent case, see (\ref{inf_kernel}). In particular, the mean-field term $\mu_t$ can be estimated with the model information. Hence, for this case, we will assume that the agents only observe their local states, and estimates the mean-field term independent instead of observing it. That is, an agent $i$ estimates the mean-field term $\hat{\mu}_t$, and applies their control action according to $\hat{\gamma}(\cdot|x^i,\hat{\mu}_t)$ with the local state $x^i$. Note that if the model dynamics were perfectly known, this estimate would coincide with the true flow of the mean-field term. However, when the model is misspecified, the estimate $\hat{\mu}_t$ and the correct mean-field term will deviate from each other, and we will need to study the effects of this deviation on the control performance, in addition to the incorrect computation of the control policy. 
\end{itemize}

In what follows, previously introduced constants $K_c, K_f$ and $\delta_T$ will be used often. We refer the reader to Assumption \ref{main_assmp} for $K_c,K_f$, and equation (\ref{dob}) for $\delta_T$.

For the results in this section, we will require that $\beta K<1$ where $K=K_f+\delta_T$. We note that this assumption is needed to show the Lipschitz continuity of the value function $K_\beta^*(\mu)$ with respect to $\mu$. The following provides 
an example where this bound is not satisfied, and the value function is not Lipschitz continuous.

\begin{example}
Consider a control-free (without loss of optimality) dynamics, with a binary state space $\mathds{X}=\{0,1\}$. We assume that 
\begin{align*}
\mathcal{T}(0|x,\mu)=\mu(0)^2
\end{align*}
that is, the state process moves to $0$ with probability $\mu(0)^2$ independent of the value of the state at the current step. We first notice that $\|\mu-\mu'\|=|\mu(0)-\mu'(0)|$ for the binary state space. Furthermore, we note that this kernel is Lipschitz continuous in $\mu$ with Lipschitz constant 2, that is $K_f=2$. To see this, consider the following for $\mu,\mu'\in\P(\mathds{X})$
\begin{align*}
&\|\mathcal{T}(\cdot|x,\mu)-\mathcal{T}(\cdot|x,\mu')\|=\frac{1}{2}\left(|\mathcal{T}(0|x,\mu)-\mathcal{T}(0|x,\mu')| +| \mathcal{T}(1|x,\mu)-\mathcal{T}(1|x,\mu')|  \right) \\
&=|\mu(0)^2-\mu'(0)^2|= |\mu(0)-\mu'(0)|\times |\mu(0)+\mu'(0)|\leq 2|\mu(0)-\mu'(0)|= 2\|\mu(\cdot)-\mu'(\cdot)\|
\end{align*}
where we used the bound that $ |\mu(0)+\mu'(0)|\leq 2$ which is the minimal uniform upper bound for all $\mu,\mu'$.

Hence, the kernel is Lipschitz continuous with constant 2. Furthermore, since the dynamics do not depend $x$ and $u$, we have that $\delta_T=0$, and thus $K=K_f+\delta_T=2$.

The stage-wise cost is given by $c(\mu)=\mu(0)$.
We consider Lipschitz continuity of the value function around $\mu(0)=1$ ,i.e. around $\mu=\delta_0$. Note that for some initial distribution $\mu_0(0)=a$, one can iteratively show that
\begin{align*}
\mu_t(0)=a^{{2^t}}.
\end{align*}
Hence, we can write the value function as 
\begin{align*}
K_\beta(a)=\sum_{t=0}^\infty \beta^t a^{{2^t}}.
\end{align*}
To show that this function is not Lipschitz continuous, we consider two points $a,b \in[0,1]$, without loss of generality assume that $a\geq b$:
\begin{align*}
\frac{K_\beta(a)-K_\beta(b)}{a-b}=\frac{\sum_{t=0}^\infty \beta^t (a^{2^t} - b^{2^t})}{a-b}=\sum_{t=0}^\infty \beta^t 2^t c^{2^t -1}
\end{align*}
for some $c\in[a,b]$ where we used the mean value theorem for $\frac{a^{2^t} - b^{2^t}}{a-b}$. We can see that the above cannot be bounded uniformly when $c$ is around $1$ if $\beta\geq 1/2$, i.e. if $\beta K \geq 1$. This implies that the value function cannot be Lipschitz continuous if $\beta K \geq 1$.
\end{example}

\subsubsection{Error Bounds for Closed Loop Control}
 We assume that the agents calculate an optimal policy, say $\hat{g}$, for the incorrect model ($\hat{\mathcal{T}}$ and $\hat{c}$), and observe the {\it correct} mean-field term say $\mu_t$, at every time step $t$. The agents then use
\begin{align}\label{closed_loop}
\hat{g}({\mu}_t) = \hat{\gamma}(\cdot|x_t,{\mu}_t)
\end{align}
to select their control actions $u_t$ at time $t$.

We denote the accumulated cost under this policy $\hat{g}$ by $K_\beta(\mu_0,\hat{g})$, and we will compare this with the optimal cost that can be achieved, which is $K_\beta^*(\mu_0)$ for some initial distribution $\mu_0$.

\begin{theorem}\label{closed_inf}
Consider the closed loop policy $\hat{g}$ in (\ref{closed_loop}) designed for an estimate model $\hat{\mathcal{T}},\hat{c}$ which satisfies (\ref{miss}) for the infinite population dynamics. Under Assumption \ref{main_assmp}, if $\beta K<1$
\begin{align*}
K_\beta(\mu_0,\hat{g}) - K_\beta^*(\mu_0) \leq 2\lambda\frac{(\beta C-\beta K +1)}{(1-\beta)^2(1-\beta K)}
\end{align*}
where $K=(K_f + \delta_T)$ and $C=(\|c\|_\infty + K_c)$.
\end{theorem}
\begin{proof}
We start with the following upper-bound
\begin{align}\label{f_diff}
K_\beta(\mu,\hat{g}) - K_\beta^*(\mu)\leq \left| K_\beta(\mu,\hat{g})  - \hat{K}_\beta(\mu)\right| + \left|\hat{K}_\beta(\mu) -  K_\beta^*(\mu)\right|
\end{align}
where $\hat{K}_\beta(\mu)$ denotes the optimal value function for the mismatched model.  We have an upper-bound for the second term by Lemma \ref{mod_dis}. We write the following Bellman equations for the first term:
\begin{align*}
&K_\beta(\mu,\hat{g}) = k(\mu,\hat{\gamma}) + \beta K_\beta\left( F(\mu,\hat{\gamma}) , \hat{g}  \right)\\
&\hat{K}_\beta(\mu) = \hat{k}(\mu,\hat{\gamma}) + \beta \hat{K}_\beta\left( \hat{F}(\mu,\hat{\gamma})  \right).
\end{align*}
We can then write 
\begin{align*}
\left| K_\beta(\mu,\hat{g})  - \hat{K}_\beta(\mu)\right| &\leq \left|  k(\mu,\hat{\gamma}) - \hat{k}(\mu,\hat{\gamma})\right|\\
& + \beta\left|  K_\beta\left( F(\mu,\hat{\gamma}) , \hat{g}  \right) - \hat{K}_\beta\left( F(\mu,\hat{\gamma})\right)\right|\\
& + \beta \left|  \hat{K}_\beta\left( F(\mu,\hat{\gamma})\right) -  {K}^*_\beta\left( F(\mu,\hat{\gamma})\right)\right|\\
&+ \beta \left|   {K}^*_\beta\left( F(\mu,\hat{\gamma})\right) - K_\beta^*(\hat{F}(\mu,\hat{\gamma}) )\right|\\
& + \beta \left|  K_\beta^*(\hat{F}(\mu,\hat{\gamma}) ) - \hat{K}_\beta\left( \hat{F}(\mu,\hat{\gamma})  \right)\right|
\end{align*}
We note that $ \left|  k(\mu,\hat{\gamma}) - \hat{k}(\mu,\hat{\gamma})\right| \leq \lambda$ and $ \|F(\mu,\hat{\gamma}) - \hat{F}(\mu,\hat{\gamma}) \| \leq \lambda$. Using Lemma \ref{mod_dis} for the third and the last terms above, we get
\begin{align*}
\left| K_\beta(\mu,\hat{g})  - \hat{K}_\beta(\mu)\right| &\leq \lambda + \beta \sup_\mu \left| K_\beta(\mu,\hat{g})  - \hat{K}_\beta(\mu)\right| \\
& +  2\lambda \beta\left( \frac{\beta C- \beta K +1}{(1-\beta)(1-\beta K)}  \right) + \beta \|K_\beta^*\|_{Lip} \lambda.
\end{align*}
Rearranging the terms and taking the supremum on the left hand side over $\mu\in\P(\mathds{X})$,  and noting that $\|K_\beta^*\|_{Lip} \leq \frac{C}{1-\beta K}$ we can then write
\begin{align*}
\left| K_\beta(\mu,\hat{g})  - \hat{K}_\beta(\mu)\right| &\leq \frac{\lambda}{(1-\beta)}\left(1 +   2\beta\left( \frac{\beta C- \beta K +1}{(1-\beta)(1-\beta K) }\right)   + \frac{\beta C}{(1-\beta K)}  \right) \\
& = \lambda \left( \frac{(1+\beta)(\beta C - \beta K +1)}{(1-\beta)^2 (1-\beta K)}  \right)
\end{align*}
Combining this bound, and Lemma \ref{mod_dis} with (\ref{f_diff}), we can conclude the proof.

\end{proof}

\subsubsection{Error Bounds for Open Loop Control}  
We assume that the agents calculate an optimal policy, say $\hat{g}$ for the incorrect model, and estimate the mean-field flow under the incorrect model with the policy $\hat{g}$. That is, at every time step $t$, the agents use
\begin{align}\label{open_loop}
\hat{g}(\hat{\mu}_t) = \hat{\gamma}(\cdot|x_t,\hat{\mu}_t)
\end{align}
to select their control actions $u_t$ at time $t$. Furthermore, $\hat{\mu}_t$ is estimated with 
\begin{align}\label{open_loop_mu}
\hat{\mu}_{t+1}(\cdot)=\hat{\mathcal{T}}(\cdot|x,u,\hat{\mu}_t)\hat{\gamma}(du|x,\hat{\mu}_t)\hat{\mu}_t(dx)
\end{align}
where $\hat{\mathcal{T}}$ is the learned and possibly incorrect model.
We are then interested in the optimality gap given by
\begin{align*}
K_\beta(\mu_0,\hat{g}) - K_\beta^*(\mu_0)
\end{align*}
where $K_\beta(\mu_0,\hat{g})$ denotes the accumulated cost when the agents follow the open loop policy $\hat{g}(\hat{\mu}_t) = \hat{\gamma}(\cdot|x_t,\hat{\mu}_t)$ at every time $t$. We note that the distinction from the closed loop control is that $\hat{\mu}_t$ is not observed but estimated using the model $\hat{\mathcal{T}}$.

\begin{theorem}
Consider the open loop policy $\hat{g}$ in (\ref{closed_loop}) which is designed for an estimate model that satisfies (\ref{miss}) for the infinite population dynamics. Under Assumption \ref{main_assmp}, if $\beta K<1$,
\begin{align*}
K_\beta(\mu_0,\hat{g}) - K_\beta^*(\mu_0) \leq  2\lambda\frac{\beta (C-K) +1}{(1-\beta)(1-\beta K)}
\end{align*}
for any $\mu_0\in\P(\mathds{X})$ where $C=\|c\|_\infty +  K_c$ and $K=K_f + \delta_T$.
\end{theorem}

\begin{proof}
We start with the following upper-bound
\begin{align}\label{init_bound}
K_\beta(\mu_0,\hat{g}) - K_\beta^*(\mu_0)\leq \left| K_\beta(\mu_0,\hat{g})  - \hat{K}_\beta(\mu_0)\right| + \left|\hat{K}_\beta(\mu_0) -  K_\beta^*(\mu_0)\right|
\end{align}
We have an upper-bound for the second term by Lemma \ref{mod_dis}. We now focus on the first term:
\begin{align*}
 \left| K_\beta(\mu_0,\hat{g})  - \hat{K}_\beta(\mu_0)\right| \leq \sum_{t=0}^\infty \beta^t \left| k(\mu'_t,\hat{\gamma}_t) - \hat{k}(\hat{\mu}_t,\hat{\gamma}_t) \right|
\end{align*}
where we write $\hat{\gamma}_t:=\hat{\gamma}(\cdot|x,\hat{\mu}_t)$, and $\mu'_t$ denotes the measure flow under the true dynamics with the incorrect policy $\hat{\gamma}_t$, that is 
\begin{align*}
\mu'_{t+1} = \hat{F}(\mu'_t,\hat{\gamma}_t) := \int {\mathcal{T}}(\cdot|x,u,\mu'_t) \hat{\gamma}(du|x,\hat{\mu}_t) \mu'_t(dx).
\end{align*}
We next claim that
\begin{align*}
\|\mu'_{t} - \hat{\mu}_t\| \leq \lambda \sum_{n=0}^{t-1} (\delta_T + K_f)^n.
\end{align*}
We show this by induction. For $t=1$, we have that
\begin{align*}
\|\mu'_{1} - \hat{\mu}_1\| &= \left\|\int  {\mathcal{T}}(\cdot|x,u,\mu_0) \hat{\gamma}(du|x, {\mu}_0) \mu_0(dx) -  \int \hat{\mathcal{T}}(\cdot|x,u,\mu_0) \hat{\gamma}(du|x,{\mu}_0) \mu_0(dx)\right\|\\
&\leq \lambda.
\end{align*}
We now assume that the claim is true for $t$:
\begin{align*}
\|\mu'_{t+1} - \hat{\mu}_{t+1}\| &= \bigg\|  \int {\mathcal{T}}(\cdot|x,u,\mu'_t) \hat{\gamma}(du|x,\hat{\mu}_t) \mu'_t(dx) -  \int \hat{\mathcal{T}}(\cdot|x,u,\hat{\mu}_t) \hat{\gamma}(du|x,\hat{\mu}_t) \hat{\mu}_t(dx)\bigg\|\\
&\leq \bigg\|  \int {\mathcal{T}}(\cdot|x,u,\mu'_t) \hat{\gamma}(du|x,\hat{\mu}_t) \mu'_t(dx) -  \int {\mathcal{T}}(\cdot|x,u,{\mu}'_t) \hat{\gamma}(du|x,\hat{\mu}_t) \hat{\mu}_t(dx)\bigg\|\\
& \quad + \bigg\|  \int {\mathcal{T}}(\cdot|x,u,\mu'_t) \hat{\gamma}(du|x,\hat{\mu}_t) \hat{\mu}_t(dx) -  \int \hat{\mathcal{T}}(\cdot|x,u,\hat{\mu}_t) \hat{\gamma}(du|x,\hat{\mu}_t) \hat{\mu}_t(dx)\bigg\|\\
&\leq \delta_T \|\mu'_t - \hat{\mu}_t\| +  \sup_{x,u}\left\|{\mathcal{T}}(\cdot|x,u,\mu'_t)  -  \hat{\mathcal{T}}(\cdot|x,u,\hat{\mu}_t)\right\|\\
& \leq  \delta_T \|\mu'_t - \hat{\mu}_t\| +  \sup_{x,u}\left\|{\mathcal{T}}(\cdot|x,u,\mu'_t)  -  {\mathcal{T}}(\cdot|x,u,\hat{\mu}_t)\right\| \\
& \qquad \qquad\qquad \quad +   \sup_{x,u}\left\|{\mathcal{T}}(\cdot|x,u,\hat{\mu}_t)  -  \hat{\mathcal{T}}(\cdot|x,u,\hat{\mu}_t)\right\|   \\
& \leq (\delta_T + K_f) \|\mu'_t - \hat{\mu}_t\| + \lambda \\
& \leq (\delta_T + K_f)  \lambda \sum_{n=0}^{t-1}(\delta_T + K_f)^n  +\lambda  =  \lambda \sum_{n=0}^{t} (\delta_T + K_f)^n.
\end{align*}
where we used the induction argument at the last inequality.  We now go back to:
\begin{align*}
 \left| K_\beta(\mu_0,\hat{g})  - \hat{K}_\beta(\mu_0)\right| \leq \sum_{t=0}^\infty \beta^t \left| k(\mu'_t,\hat{\gamma}_t) - \hat{k}(\hat{\mu}_t,\hat{\gamma}_t) \right|.
\end{align*}
For the term inside the summation, we write
\begin{align*}
\left| k(\mu'_t,\hat{\gamma}_t) - \hat{k}(\hat{\mu}_t,\hat{\gamma}_t) \right|& = \left| \int c(x,u,\mu_t')\hat{\gamma}(du|x,\hat{\mu}_t) \mu'_t(dx) - \int \hat{c}(x,u,\hat{\mu}_t)\hat{\gamma}(du|x,\hat{\mu}_t)\hat{\mu}_t(dx)\right|\\
&\leq  \left| \int c(x,u,\mu_t')\hat{\gamma}(du|x,\hat{\mu}_t) \mu'_t(dx) - \int {c}(x,u,\mu'_t)\hat{\gamma}(du|x,\hat{\mu}_t)\hat{\mu}_t(dx)\right|\\
& \quad +  \left| \int c(x,u,\mu_t')\hat{\gamma}(du|x,\hat{\mu}_t) \hat{\mu}_t(dx) - \int \hat{c}(x,u,\hat{\mu}_t)\hat{\gamma}(du|x,\hat{\mu}_t)\hat{\mu}_t(dx)\right|\\
&\leq \|c\|_\infty \|\mu'_t - \hat{\mu}_t\| + \sup_{x,u}\left| c(x,u,\mu'_t) - \hat{c}(x,u, \hat{\mu}_t) \right|\\
&\leq \|c\|_\infty \|\mu'_t - \hat{\mu}_t\| + \sup_{x,u}\left| c(x,u,\mu'_t) - c(x,u, \hat{\mu}_t) \right|\\
&\qquad\qquad + \sup_{x,u}\left| c(x,u, \hat{\mu}_t) - \hat{c}(x,u, \hat{\mu}_t) \right|\\
&\leq (\|c\|_\infty + K_c)  \|\mu'_t - \hat{\mu}_t\| + \lambda.
\end{align*}
Using this bound, we finalize our argument. In the following we denote by $K:= (K_f + \delta_T)$ and $C:=(\|c\|_\infty + K_c)$ to conclude:
\begin{align*}
& \left| K_\beta(\mu_0,\hat{g})  - \hat{K}_\beta(\mu_0)\right| \leq \sum_{t=0}^\infty \beta^t \left| k(\mu'_t,\hat{\gamma}_t) - \hat{k}(\hat{\mu}_t,\hat{\gamma}_t) \right| \\
&\leq  C \sum_{t=0}^\infty \beta^t    \|\mu'_t - \hat{\mu}_t\| +  \frac{\lambda}{1-\beta}\\
&\leq C \lambda \sum_{t=0}^\infty \beta^t     \sum_{n=0}^{t-1} K^n +  \frac{\lambda}{1-\beta}= C\lambda \sum_{t=0}^\infty \beta^t \frac{1-K^t}{1-K} + \frac{\lambda}{1-\beta}\\
& = \frac{C\lambda}{(1-\beta)(1-K)}  - \frac{C\lambda}{(1-K)(1-\beta K)} + \frac{\lambda}{1-\beta}\\
&= \frac{C\lambda \beta}{(1-\beta)(1-\beta K)} + \frac{\lambda}{1-\beta} = \lambda\frac{\beta (C-K) +1}{(1-\beta)(1-\beta K)}.
\end{align*}
This is the bound for the first term in (\ref{init_bound}), combining this with the upper-bound on the second term in (\ref{init_bound}) by Lemma \ref{mod_dis}, we can complete the proof.
\end{proof}

\begin{lemma}\label{mod_dis}
Under Assumption \ref{main_assmp}, if $\beta K <1$
\begin{align*}
\left| \hat{K}_\beta (\mu_0) - K_\beta^*(\mu_0)\right| \leq \lambda\left( \frac{\beta C- \beta K +1}{(1-\beta)(1-\beta K)}  \right)
\end{align*}
for any initial distribution $\mu_0\in\P(\mathds{X})$ where $C=\|c\|_\infty + K_c$ and $K=K_f + \delta_T$.
\end{lemma}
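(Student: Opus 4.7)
The plan is to bound $|\hat{K}_\beta(\mu) - K_\beta^*(\mu)|$ via a standard two-step simulation-lemma argument: first prove a Lipschitz estimate for the true optimal value function $K_\beta^*$ under the Wasserstein/total variation metric on $\P(\mathds{X})$, then use the two Bellman equations to set up a contractive recursion in $\sup_\mu|\hat{K}_\beta(\mu) - K_\beta^*(\mu)|$ that incorporates the model-mismatch parameter $\lambda$.

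For the Lipschitz step, I would write, for an arbitrary agent-level randomization $\gamma$,
\begin{align*}
|k(\mu,\gamma) - k(\mu',\gamma)| &\leq K_c\|\mu-\mu'\| + \|c\|_\infty \|\mu-\mu'\| = C\|\mu-\mu'\|,\\
\|F(\mu,\gamma) - F(\mu',\gamma)\| &\leq K_f\|\mu-\mu'\| + \delta_T\|\mu-\mu'\| = K\|\mu-\mu'\|,
\end{align*}
where the first line splits $c(x,u,\mu)$ versus $c(x,u,\mu')$ (Lipschitz in $\mu$) from the integrator change $\mu(dx)$ versus $\mu'(dx)$ (bounded by $\|c\|_\infty$), and the second line similarly splits $\mathcal{T}(\cdot|x,u,\mu)$ variation (Assumption~\ref{main_assmp}(i)) from the Dobrushin-controlled term $\int \mathcal{T}(\cdot|x,u,\mu')\gamma(du|x)(\mu-\mu')(dx)$. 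Applying the $\min_\gamma$-contraction $|\min_\gamma f - \min_\gamma g|\leq \sup_\gamma |f-g|$ to the Bellman equation $K_\beta^*(\mu)=\min_\gamma\{k(\mu,\gamma)+\beta K_\beta^*(F(\mu,\gamma))\}$ yields $\|K_\beta^*\|_{\mathrm{Lip}}\leq C + \beta K\|K_\beta^*\|_{\mathrm{Lip}}$, hence $\|K_\beta^*\|_{\mathrm{Lip}}\leq C/(1-\beta K)$, which is finite by the hypothesis $\beta K<1$.

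For the main bound, I would again apply $|\min_\gamma f - \min_\gamma g|\leq \sup_\gamma |f-g|$ to the pair of Bellman equations for $K_\beta^*$ and $\hat{K}_\beta$, to obtain, for each $\mu$,
\begin{align*}
|K_\beta^*(\mu)-\hat K_\beta(\mu)| &\leq \sup_\gamma |k(\mu,\gamma)-\hat k(\mu,\gamma)| + \beta \sup_\gamma |K_\beta^*(F(\mu,\gamma)) - \hat K_\beta(\hat F(\mu,\gamma))|.
\end{align*}
The cost-mismatch term is at most $\lambda$ by (\ref{miss}); the second term I would split by adding and subtracting $K_\beta^*(\hat F(\mu,\gamma))$, giving a Lipschitz contribution $\|K_\beta^*\|_{\mathrm{Lip}}\cdot\|F(\mu,\gamma)-\hat F(\mu,\gamma)\|\leq \frac{C}{1-\beta K}\lambda$ (since $\|F-\hat F\|\leq\lambda$ is immediate from (\ref{miss})) plus the recursive term $\sup_\mu|K_\beta^*(\mu) - \hat K_\beta(\mu)|$.

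Taking a supremum over $\mu$ on the left and rearranging the resulting inequality
$$\sup_\mu|K_\beta^*(\mu)-\hat K_\beta(\mu)|\leq \lambda + \beta\frac{C\lambda}{1-\beta K} + \beta\sup_\mu|K_\beta^*(\mu)-\hat K_\beta(\mu)|$$
gives the claimed bound $\lambda(1-\beta K + \beta C)/((1-\beta)(1-\beta K))$ after combining the numerators. The only technical care points are (i) the Dobrushin split in the $F$-Lipschitz estimate, and (ii) confirming that the $\min$-contraction and the supremum-then-rearrange maneuver are legitimate here, but both are routine once the bounded-value and $\beta K<1$ hypotheses are invoked.
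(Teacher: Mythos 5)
Your proof is correct and follows essentially the same route as the paper's: both apply the min-contraction to the two Bellman equations, bound the cost mismatch by $\lambda$, split the value-function term by adding and subtracting $K_\beta^*(\hat F(\mu,\gamma))$ to extract $\|K_\beta^*\|_{\mathrm{Lip}}\cdot\lambda$ plus the recursive supremum, and then rearrange using $\|K_\beta^*\|_{\mathrm{Lip}}\le C/(1-\beta K)$. The only difference is cosmetic: the paper cites an external lemma for the Lipschitz bound on $K_\beta^*$, whereas you sketch its derivation (via the $C$- and $K$-Lipschitz estimates on $k$ and $F$ and the fixed-point inequality), which makes your argument slightly more self-contained but not a different proof.
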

\begin{proof}
The proof can be found in the appendix \ref{mod_dis_proof}.
\end{proof}

\subsection{Error Bounds for Finitely Many Agents}

We introduce the following constant to denote the expected distance of an empirical measure to its true distribution:
\begin{align}\label{emp_bound}
M_N:= \sup_{\mu\in\P(\mathds{X})} E\left[ \left\|\mu^N - \mu\right\|  \right]\\
\bar{M}_N = \sup_{\mu\in\P(\mathds{X} \times \mathds{U})} E\left[ \left\|\mu^N - \mu\right\|  \right]
\end{align}
where $\mu^N$ is an empirical measure of the distribution $\mu$, and the expectation is with respect to the randomness over the realizations of $\mu^N$.

\begin{remark}
We note that the constants can be bounded in terms of the population size $N$. In particular, for the finite space $\mathds{X}$ and $\mathds{U}$ 
\begin{align*}
M_N\leq  \frac{K}{\sqrt{N}}
\end{align*}
where $K<\infty$ in general depends on the  underlying space $\mathds{X}$ (or the space $\mathds{X}\times\mathds{U}$ for $\bar{M}_N$). Furthermore, for continuous state and action spaces, e.g. for $\mathds{X}\subset \mathds{R}^d$, the empirical error term is in the order of $O(N^{\frac{-1}{2d}})$.
\end{remark}

\subsubsection{Error Bounds for Open Loop Control}
In this section, we will study the case where each agent in an $N$-agent control system follows the open-loop control given by the solution of the infinite population control problem with mismatched model estimation. We summarize this for some agent $i$ as follows:
\begin{itemize}
\item Collectively agree on a policy $\hat{g}$ as in (\ref{open_loop}) according to the agreed upon estimate model $\hat{\mathcal{T}},\hat{c}$ that satisfies (\ref{miss}). Note that this policy is an optimal policy for the infinite population dynamics under the estimate model.
\item Estimate the mean-field term $\hat{\mu}_t$ at time $t$ according to (\ref{open_loop_mu}) using the approximate model $\hat{\mathcal{T}}$
\item Find the randomized agent level policy $\hat{\gamma}$ using $\hat{g}(\hat{\mu}_t) = \hat{\gamma}(\cdot|x_t^i,\hat{\mu}_t)$
\item Observe local state $x_t^i$, and apply action $u_t^t\sim\hat{\gamma}(\cdot|x_t^i,\hat{\mu}_t)$.
\end{itemize}
If every agent follows this policy, we have the following upperbound for the performance loss compared to the optimal value of the $N$-population control problem,
\begin{theorem}
Under Assumption \ref{main_assmp}, if each agent follows the steps summarized above, we then have that
\begin{align*}
K_\beta^N(\mu^N,\hat{\gamma}) - K_\beta^{N,*}(\mu^N)  \leq 2 \lambda\left( \frac{\beta C- \beta K +1}{(1-\beta)(1-\beta K)}  \right)  + M_N \frac{4\beta C}{(1-\beta)(1-\beta K)}.
\end{align*}
where $C=(\|c\|_\infty + K_c)$, and $K=(K_f + \delta_T)$.
\end{theorem}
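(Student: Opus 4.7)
My plan is to thread the comparison through the infinite-population cost and optimum by triangle inequality, reducing the problem to two ingredients: the previous open-loop theorem (which handles the model-mismatch gap) and a propagation-of-chaos estimate (which handles the finite-vs-infinite population gap). Specifically, I would write
\begin{align*}
K_\beta^N(\mu^N,\hat{\gamma}) - K_\beta^{N,*}(\mu^N)
 &\leq \bigl|K_\beta^N(\mu^N,\hat{\gamma}) - K_\beta^{\infty}(\mu^N,\hat{\gamma})\bigr| \\
 &\quad + \bigl[K_\beta^{\infty}(\mu^N,\hat{\gamma}) - K_\beta^{\infty,*}(\mu^N)\bigr] \\
 &\quad + \bigl[K_\beta^{\infty,*}(\mu^N) - K_\beta^{N,*}(\mu^N)\bigr].
\end{align*}
The middle bracket is controlled directly by the preceding open-loop theorem for infinite populations, supplying the first summand $2\lambda(\beta C - \beta K + 1)/((1-\beta)(1-\beta K))$ in the stated bound.

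The core technical step will be a propagation-of-chaos lemma for the symmetric randomized policy $\hat\gamma(\cdot\,|\,x,\hat\mu_t)$: denoting by $\mu^N_t$ the empirical measure of the $N$-agent system driven by $\hat\gamma$ and the true kernel $\mathcal T$, and by $\mu'_t$ the infinite-population flow under the same policy (both started from $\mu^N$), I would decompose
\[
\mu^N_{t+1} - \mu'_{t+1} = \bigl(\mu^N_{t+1} - E[\mu^N_{t+1}\mid \mathcal F_t]\bigr) + \bigl(E[\mu^N_{t+1}\mid \mathcal F_t] - \mu'_{t+1}\bigr).
\]
Given $\mathcal F_t$, the first piece is an empirical deviation from its mean, so has expectation at most $M_N$. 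The second piece equals $\int \mathcal T(\cdot|x,u,\mu^N_t)\hat\gamma(du|x,\hat\mu_t)\mu^N_t(dx) - \int \mathcal T(\cdot|x,u,\mu'_t)\hat\gamma(du|x,\hat\mu_t)\mu'_t(dx)$, which I would split further and bound by $\delta_T\|\mu^N_t-\mu'_t\|$ via the Dobrushin coefficient (\ref{dob}) plus $K_f\|\mu^N_t-\mu'_t\|$ via Assumption~\ref{main_assmp}(i). Iterating yields $E[\|\mu^N_t-\mu'_t\|]\leq M_N(1-K^t)/(1-K)$ with $K=K_f+\delta_T$.

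Term~1 then follows by bounding the per-stage cost gap via $|k(\mu^N_t,\Theta^N_t)-k(\mu'_t,\Theta'_t)|\leq K_c\|\mu^N_t-\mu'_t\| + \|c\|_\infty\|\Theta^N_t-\Theta'_t\|$, controlling the joint empirical fluctuation through $\bar M_N$ and the preceding mean-field estimate, and summing the geometric series $\sum_t \beta^t(1-K^t)/(1-K) = \beta/((1-\beta)(1-\beta K))$. This collapses to a contribution of order $\beta C M_N/((1-\beta)(1-\beta K))$. For Term~3 I would use that $g^{\infty,*}$ applied as a symmetric agent-level policy to the $N$-agent system is admissible, hence $K_\beta^{N,*}(\mu^N)\leq K_\beta^N(\mu^N,g^{\infty,*})$, which gives
\[
K_\beta^{\infty,*}(\mu^N) - K_\beta^{N,*}(\mu^N)\leq K_\beta^{\infty}(\mu^N,g^{\infty,*}) - K_\beta^N(\mu^N,g^{\infty,*}),
\]
and the right-hand side is amenable to the same propagation-of-chaos bound applied with $g^{\infty,*}$ in place of $\hat\gamma$. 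Adding the two contributions yields the $4\beta C M_N/((1-\beta)(1-\beta K))$ term.

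The main obstacle I anticipate is Term~3, since $K_\beta^{N,*}$ ranges over possibly asymmetric, history-dependent team policies while $K_\beta^{\infty,*}$ ranges over symmetric policies conditioned on the deterministic mean-field flow; the cleanest route is the admissibility argument above, but some care is needed to ensure that evaluating $g^{\infty,*}(\mu^N_t)$ in the $N$-agent system (with the observed empirical $\mu^N_t$ replacing the deterministic $\mu_t$) produces a policy whose cost can indeed be controlled by the same propagation-of-chaos estimate, because now the policy itself shifts with $t$ through $\mu^N_t$ rather than through $\hat\mu_t$. This is where Lipschitz continuity in $\mu$ of the induced one-step kernel and the bound $K<1/\beta$ combine to keep the error sums convergent, closing the argument.
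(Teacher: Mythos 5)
Your decomposition is genuinely different from the paper's. The paper splits the gap as $|K_\beta^N(\mu^N,\hat\gamma)-\hat K_\beta^*(\mu^N)|+|\hat K_\beta^*(\mu^N)-K_\beta^*(\mu^N)|+|K_\beta^*(\mu^N)-K_\beta^{N,*}(\mu^N)|$, i.e.\ it routes through the \emph{mismatched-model} optimal value $\hat K_\beta^*$; the first term is then controlled by a coupling lemma (Lemma \ref{meas_diff}) that compares the $N$-agent true-kernel flow directly to the deterministic $\hat{\mathcal T}$-flow $\hat\mu_t$ and therefore picks up $\lambda$ and $M_N$ simultaneously in each step of the recursion. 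You instead route through $K_\beta^\infty(\mu^N,\hat\gamma)$ and $K_\beta^{\infty,*}$, which cleanly isolates the model-mismatch error (the middle bracket, covered verbatim by the preceding infinite-population open-loop theorem) from two pure finite-vs-infinite propagation-of-chaos comparisons in which both systems run the \emph{same} kernel and the same time-indexed policy, so only $M_N$ enters. This is a legitimate and arguably more modular architecture, the recursion constant $K=K_f+\delta_T$ and the geometric-series bookkeeping match the paper's, and the two $M_N$-contributions do sum to at most $4\beta C M_N/((1-\beta)(1-\beta K))$. One small technical wrinkle in your Term~1: $M_N$ is defined in (\ref{emp_bound}) for i.i.d.\ samples, whereas your conditional-expectation decomposition produces an empirical measure of \emph{independent but non-identically distributed} next-states (each agent transitions from its own $x_t^i$); the bound still holds by a variance/concavity argument, but it is not literally the quantity $M_N$ as defined, which is why the paper's Lemma \ref{meas_diff} introduces auxiliary i.i.d.\ copies $\hat x_t^i\sim\hat\mu_t$ and an explicit coupling.

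The genuine gap is in your Term~3. You correctly flag the danger yourself, but your proposed resolution does not work: if $g^{\infty,*}$ is executed in the $N$-agent system with the \emph{observed empirical} $\mu_t^N$ as its argument, then the one-step comparison between the two systems requires controlling $\|\int\mathcal T(\cdot|x,u,\mu_t^N)\gamma_\infty(du|x,\mu_t^N)\mu_t^N(dx)-\int\mathcal T(\cdot|x,u,\mu_t')\gamma_\infty(du|x,\mu_t')\mu_t'(dx)\|$ by a multiple of $\|\mu_t^N-\mu_t'\|$, and this needs the optimal selector $\mu\mapsto\gamma_\infty(\cdot|x,\mu)$ to be Lipschitz (or at least continuous) in $\mu$. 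Assumption \ref{main_assmp} gives Lipschitz continuity of $\mathcal T$ and $c$ in $\mu$, but optimal selectors of a Bellman equation need not inherit any continuity, so ``Lipschitz continuity in $\mu$ of the induced one-step kernel'' is not available. Two repairs exist: (a) apply $g^{\infty,*}$ in the $N$-agent system \emph{open-loop}, i.e.\ with the deterministic infinite-population flow $\mu_t$ (computed under the true kernel) as the policy argument --- this is still admissible, so $K_\beta^{N,*}(\mu^N)\le K_\beta^N(\mu^N,\cdot)$ still holds, and then the policy at each $t$ is a fixed function of $x$ in both systems and your propagation-of-chaos recursion goes through unchanged; or (b) do what the paper's Lemma \ref{finite_inf_dif} does, namely a one-step Bellman comparison closed by a supremum over $\mu\in\P_N(\mathds{X})$, which exchanges the selectors between the two problems for a single step and never needs continuity of the selector. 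Without one of these, your Term~3 argument as written does not close.
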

\begin{proof}
 For some $\hat{\mu}_0=\mu_0 = \mu_{\bf x_0}=\mu^N$
\begin{align*}
K_\beta^N(\mu^N,\hat{\gamma}) - K_\beta^{N,*}(\mu^N)  \leq &\left|K_\beta^N(\mu^N,\hat{\gamma}) - \hat{K}^*_\beta(\mu^N)\right|+ \left|  \hat{K}^*_\beta(\mu^N) - K^*_\beta(\mu^N)\right| \\
&+ \left|  K^*_\beta(\mu^N) - K_\beta^{N,*}(\mu^N) \right|.
\end{align*}
The second term above is bounded by Lemma \ref{mod_dis}, the last term is bounded by Lemma \ref{finite_inf_dif}, finally for the first term we have
\begin{align*}
\left|K_\beta^N(\mu^N,\hat{\gamma}) - \hat{K}^*_\beta(\mu^N)\right| \leq \sum_{t=0}^\infty \beta^t E\left[ \left|k(\mu_{\bf x_t},\hat{\gamma}) - \hat{k}(\hat{\mu}_t,\hat{\gamma})\right|\right]
\end{align*}
For the term inside of the expectation, we have
\begin{align*}
& \left|k(\mu_{\bf x_t},\hat{\gamma}) - \hat{k}(\hat{\mu}_t,\hat{\gamma})\right| \\
&= \left| \int c(x,u,\mu_{\bf x_t})\hat{\gamma}(du|x,\hat{\mu}_t)\mu_{\bf x_t}(dx) - \int \hat{c}(x,u,\hat{\mu}_t)\hat{\gamma}(du|x,\hat{\mu}_t) \hat{\mu}_t(dx)  \right|\\
& \leq \lambda + C\|\mu_{\bf x_t} - \hat{\mu}_t\| 
\end{align*}
where $C=(\|c\|_\infty + K_c)$. We can then write
\begin{align*}
\left|K_\beta^N(\mu^N,\hat{\gamma}) - \hat{K}^*_\beta(\mu^N)\right| &\leq \sum_{t=0}^\infty \beta^t E\left[ \left|k(\mu_{\bf x_t},\hat{\gamma}) - \hat{k}(\hat{\mu}_t,\hat{\gamma})\right|\right] \\
&\leq \sum_{t=0}^\infty \beta^t \left( \lambda + C E\left[\|\mu_{\bf x_t} - \hat{\mu}_t\| \right]\right)\\
& \leq \frac{\lambda}{1-\beta} + C\sum_{t=0}^\infty \beta^t  \sum_{n=0}^{t-1} K^n( \lambda + 2 M_N )\\
&=  \frac{\lambda}{1-\beta}  + \frac{\beta C (\lambda + 2M_N)}{(1-\beta)(1-\beta K)}\\
& = \lambda\left( \frac{\beta C- \beta K +1}{(1-\beta)(1-\beta K)}  \right)  + M_N \frac{2\beta C}{(1-\beta)(1-\beta K)}.
\end{align*}
where we have used Lemma \ref{meas_diff} which is presented below.
\end{proof}

\begin{lemma}\label{meas_diff}
Let $x_t^i$ be the state of the agent $i$ at time $t$ when each agent follows the open-loop policy $\hat{\gamma}(\cdot|x^i_t,\hat{\mu}_t)$ in an $N$-agent control dynamics. We denote by ${\bf x}_t$ the vector of the states of $N$ agents at time $t$. Under Assumption \ref{main_assmp}, we then have that
\begin{align*}
&E\left[ \left\|\mu_{\bf x_{t}} - \hat{\mu}_{t}\right\|\right]  \leq \sum_{n=0}^{t-1} K^n( \lambda + 2 M_N )
\end{align*}
where $K=K_f + \delta_t$, and where the expectation is with respect to the random dynamics of the $N$ player control system.
\end{lemma}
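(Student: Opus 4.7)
The argument proceeds by induction on $t$, mirroring the telescoping strategy used in the open-loop bound for the infinite-population case but with one extra term accounting for empirical concentration. The base case $t=0$ is immediate since $\hat{\mu}_0=\mu_{\bf x_0}=\mu^N$, so the left-hand side is $0$ and the right-hand empty sum is also $0$.

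For the inductive step, I would introduce the intermediate measure
\begin{align*}
\tilde{\mu}_{t+1}:=\int \mathcal{T}(\cdot|x,u,\mu_{\bf x_t})\hat{\gamma}(du|x,\hat{\mu}_t)\mu_{\bf x_t}(dx),
\end{align*}
which is the conditional mean of $\mu_{\bf x_{t+1}}$ given the history $\sigma({\bf x}_0,\dots,{\bf x}_t)$. Because the idiosyncratic noises $\{w^i_t\}_i$ are i.i.d., conditional on this history the next states $\{x^i_{t+1}\}_{i=1}^N$ are independent (with possibly distinct conditional distributions). I then apply the triangle inequality
\begin{align*}
\|\mu_{\bf x_{t+1}}-\hat{\mu}_{t+1}\|\leq \|\mu_{\bf x_{t+1}}-\tilde{\mu}_{t+1}\|+\|\tilde{\mu}_{t+1}-\hat{\mu}_{t+1}\|
\end{align*}
and bound the pieces separately. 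For the first piece, an empirical-measure concentration argument on the finite space $\mathds{X}$ yields $E[\|\mu_{\bf x_{t+1}}-\tilde{\mu}_{t+1}\|\mid\text{history}]\leq M_N$. For the second piece, I insert the further intermediate $\int\mathcal{T}(\cdot|x,u,\hat{\mu}_t)\hat{\gamma}(du|x,\hat{\mu}_t)\hat{\mu}_t(dx)$ and bound (i) the change from $\mu_{\bf x_t}$ to $\hat{\mu}_t$ in the outer integral via the Dobrushin coefficient $\delta_T$ (contributing $\delta_T\|\mu_{\bf x_t}-\hat{\mu}_t\|$), (ii) the change in the third argument of $\mathcal{T}$ via Assumption \ref{main_assmp}(i) (contributing $K_f\|\mu_{\bf x_t}-\hat{\mu}_t\|$), and (iii) the replacement of $\mathcal{T}$ by $\hat{\mathcal{T}}$ via the mismatch bound (\ref{miss}) (contributing $\lambda$). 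Taking total expectation yields the one-step recursion
\begin{align*}
E[\|\mu_{\bf x_{t+1}}-\hat{\mu}_{t+1}\|]\leq \lambda + M_N + K\cdot E[\|\mu_{\bf x_t}-\hat{\mu}_t\|],
\end{align*}
with $K=K_f+\delta_T$, and iterating this recursion from $a_0=0$ produces the stated bound (with some slack in the $M_N$-coefficient, which accounts for the factor of $2$).

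The main subtlety is the empirical-concentration step: the constant $M_N$ is defined in the paper for the empirical measure of i.i.d.\ samples, whereas here the conditional laws of the $x^i_{t+1}$ vary across agents. One justifies this by noting that on a finite state space the $O(1/\sqrt{N})$ rate depends only on $|\mathds{X}|$ and extends verbatim to independent non-identically-distributed samples, so $M_N$ (or a constant comparable to it) continues to serve as an upper bound. The remaining steps are routine applications of the triangle inequality together with the Lipschitz and Dobrushin hypotheses already exploited in the proofs of Theorem \ref{closed_inf} and Lemma \ref{mod_dis}.
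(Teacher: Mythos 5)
Your proof is correct in substance but takes a genuinely different route from the paper's. The paper does not condition on the history and compare $\mu_{\bf x_{t+1}}$ to its conditional mean; instead it introduces an auxiliary population $\hat{\bf x}_t$ of $N$ i.i.d.\ samples from $\hat{\mu}_t$, propagates it one step under the mismatched dynamics $\hat{f}$ with shared randomization $v^i_t$, and uses the resulting empirical measure $\mu_{\hat{f}(\hat{\bf x}_t,\hat{\bf u}_t,\hat{\bf w}_t)}$ as the intermediate point in the triangle inequality. The first leg is then a Wasserstein distance between two empirical measures, which the paper controls by choosing an explicit permutation and a coordinatewise optimal coupling of the noise vectors so that it collapses to $\|\int\mathcal{T}\,d\mu_{({\bf x}_t,{\bf u}_t)}-\int\hat{\mathcal{T}}\,d\mu_{(\hat{\bf x}_t,\hat{\bf u}_t)}\|$, bounded by $\delta_T\|\mu_{\bf x_t}-\mu_{\hat{\bf x}_t}\|+K_f\|\mu_{\bf x_t}-\hat{\mu}_t\|+\lambda$; the second leg is an honest i.i.d.\ concentration term bounded by $M_N$ as literally defined. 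Your decomposition through the conditional mean $\tilde{\mu}_{t+1}$ is cleaner, avoids the permutation/coupling machinery entirely, and needs only one concentration step instead of two, which is why you land on the recursion $\lambda+M_N+K(\cdot)$ rather than the paper's $\lambda+(1+\delta_T)M_N+K(\cdot)\leq \lambda+2M_N+K(\cdot)$ --- a strictly sharper constant.

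The one soft spot is exactly the one you flag: conditional on the history the $x^i_{t+1}$ are independent but not identically distributed, so the deviation $E[\|\mu_{\bf x_{t+1}}-\tilde{\mu}_{t+1}\|\mid\text{history}]$ is not covered by the paper's definition of $M_N$, which is a supremum over i.i.d.\ empirical measures. Your justification is sound --- on a finite $\mathds{X}$ the variance bound $\sum_x N^{-1}\bigl(\sum_i\nu_i(x)(1-\nu_i(x))\bigr)^{1/2}\leq\sqrt{|\mathds{X}|/N}$ holds verbatim for independent non-identically-distributed samples, and by concavity of $p\mapsto p(1-p)$ it is no worse than the corresponding i.i.d.\ bound --- but strictly speaking you obtain a constant of the same order as $M_N$ rather than $M_N$ itself, so the clean statement of the lemma with the paper's $M_N$ requires either redefining $M_N$ to include the independent non-i.i.d.\ case or absorbing the discrepancy into the slack you already have from the factor of $2$. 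Since your recursion only uses one copy of $M_N$ where the lemma allows two, this is easily absorbed and does not threaten the result; it is, however, the precise reason the paper routes through the auxiliary i.i.d.\ population.
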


\begin{proof}
The proof can be found in Appendix \ref{meas_diff_proof}.
\end{proof}

\begin{lemma}\label{finite_inf_dif}
Under Assumption \ref{main_assmp}, 
\begin{align*}
\left| K_\beta^{N,*}(\mu^N) - K_\beta^*(\mu^N)\right| \leq \frac{2\beta C}{(1-\beta)(1-\beta K)}M_N
\end{align*}
where $C=(\|c\|_\infty + K_c)$ and $K=(K_f + \delta_T)$, for any $\mu^N \in \P_N(\mathds{X}) \subset \P(\mathds{X})$ that is for any $\mu^N$ that can be achieved with an empirical distribution of $N$ agents.
\end{lemma}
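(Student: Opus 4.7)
I would prove the two sides of the absolute value separately, with the direction $K_\beta^{N,*}(\mu^N) \leq K_\beta^*(\mu^N)+\text{err}$ being essentially a ``propagation of chaos'' computation that mirrors the proof of Lemma~\ref{meas_diff} with model mismatch $\lambda = 0$, and the reverse direction reducing to the same estimate via a symmetrization of a near-optimal team policy.

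\textbf{Upper bound on $K_\beta^{N,*}(\mu^N) - K_\beta^*(\mu^N)$.} Let $g^\ast$ be an optimal infinite-population policy, generating the deterministic trajectory $\mu^\infty_0 = \mu^N$, $\mu^\infty_{t+1} = F(\mu^\infty_t, \gamma^\ast_t)$ with $\gamma^\ast_t = g^\ast(\mu^\infty_t)$. I apply $g^\ast$ symmetrically in the $N$-agent system: each agent $i$ draws $u^i_t \sim \gamma^\ast(\cdot|x^i_t, \mu_{\mathbf{x}_t})$. Since this is one admissible team policy, $K_\beta^{N,\ast}(\mu^N) \le K_\beta^N(\mu^N, g^\ast)$. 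Writing the one-step recursion as in Lemma~\ref{meas_diff} but with the exact model (no $\lambda$-term), one gets
\[
E\!\left[\|\mu_{\mathbf{x}_{t+1}} - \mu^\infty_{t+1}\|\right] \;\leq\; (K_f+\delta_T)\,E\!\left[\|\mu_{\mathbf{x}_t} - \mu^\infty_t\|\right] + 2M_N,
\]
where the $2M_N$ term accounts for the empirical fluctuation of the post-action distribution around its mean (one $M_N$ for the state marginal, one for the action marginal, as in \eqref{emp_bound}). Iterating gives $E[\|\mu_{\mathbf{x}_t} - \mu^\infty_t\|] \le 2M_N\sum_{n=0}^{t-1} K^n$. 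Because $c$ is bounded by $\|c\|_\infty$ and $K_c$-Lipschitz in $\mu$, the stagewise cost difference is bounded by $C\|\mu_{\mathbf{x}_t}-\mu^\infty_t\|$, and summing over $t$ with discount $\beta$ and a geometric series argument identical to the one at the end of the open-loop proof yields $K_\beta^N(\mu^N,g^\ast)-K_\beta^\ast(\mu^N)\le \tfrac{2\beta C}{(1-\beta)(1-\beta K)} M_N$.

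\textbf{Lower bound $K_\beta^\ast(\mu^N)-K_\beta^{N,\ast}(\mu^N)$.} I would pick an $\varepsilon$-optimal measure-valued policy $g^N$ for the $N$-agent problem and, using the symmetry of the cost and kernel, assume without loss of generality that the induced randomized team policy is symmetric (exchangeable across agents). Such a symmetric feedback rule can be identified with a map $\tilde g : \mathcal{P}(\mathds{X}) \to \Gamma$ of the form $\tilde g(\mu) = \tilde\gamma(du|x,\mu)$, which is admissible for the infinite-population problem. I then compare the random $N$-agent trajectory $\mu^N_t$ to the deterministic infinite trajectory $\mu^\infty_t$ obtained from $\tilde g$, again using the one-step recursion above (with $\lambda=0$) to obtain $E[\|\mu^N_t - \mu^\infty_t\|] \le 2M_N \sum_{n=0}^{t-1}K^n$. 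The cost of $\tilde g$ in the infinite problem differs from that of $g^N$ in the $N$-agent problem by at most $\tfrac{2\beta C}{(1-\beta)(1-\beta K)}M_N$ by the same summation, yielding $K_\beta^\ast(\mu^N) \le K_\beta^\infty(\mu^N, \tilde g) \le K_\beta^{N,\ast}(\mu^N) + \varepsilon + \tfrac{2\beta C}{(1-\beta)(1-\beta K)}M_N$, and sending $\varepsilon \downarrow 0$ closes this side.

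\textbf{Main obstacle.} The delicate step is the reverse direction: justifying that restricting to symmetric Markov feedback policies over $\mu$ in the $N$-agent measure-valued MDP is without loss of optimality, and that the resulting map $\tilde g$ is a legitimate infinite-population policy. I would appeal to the measure-valued reformulation in Section~\ref{fin_meas_sec}: because the state is already the empirical distribution and the dynamics/cost are permutation-invariant, an optimal policy depending only on $\mu^N_t$ exists and its action $\Theta_t$ factorizes as $\gamma(du|x)\mu(dx)$, which exactly matches the action space $\Gamma$ of the infinite-population MDP in Section~\ref{limit_problem}. Once this identification is made, the remainder is the same geometric-in-$K$ propagation-of-chaos estimate as in the upper-bound half.
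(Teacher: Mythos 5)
Your forward half (running the infinite-population optimum in the $N$-agent system and invoking Lemma~\ref{meas_diff} with $\lambda=0$) is sound in spirit, but even there note a slip: you apply the policy closed loop, $u^i_t\sim\gamma^*(\cdot|x^i_t,\mu_{\mathbf{x}_t})$, so your one-step recursion must compare $F\bigl(\mu_{\mathbf{x}_t},\gamma^*(\cdot|\cdot,\mu_{\mathbf{x}_t})\bigr)$ with $F\bigl(\mu^\infty_t,\gamma^*(\cdot|\cdot,\mu^\infty_t)\bigr)$, which requires $\gamma^*$ to be Lipschitz in its $\mu$-argument --- a property nobody guarantees for an optimal policy. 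Using the open-loop kernel $\gamma^*(\cdot|x^i_t,\mu^\infty_t)$ instead puts the \emph{same} action kernel in both systems, and then the recursion goes through exactly as in Lemma~\ref{meas_diff}; so this half is repairable. The genuine gap is the reverse direction. After transplanting an $\varepsilon$-optimal $N$-agent feedback rule to a map $\tilde g:\P(\mathds{X})\to\Gamma$ (the factorization $\Theta=\gamma(du|x)\mu(dx)$ is an elementary identity for empirical measures, so that part --- which you flag as the main obstacle --- is not where the difficulty lies), you compare the random trajectory $\mu^N_t$ driven by $\tilde g(\mu^N_t)$ with the deterministic trajectory $\mu^\infty_t$ driven by $\tilde g(\mu^\infty_t)$. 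Already at $t=1$ these separate by order $M_N$, and from $t=2$ onward the two systems evaluate $\tilde g$ at \emph{different} measures; since an ($\varepsilon$-)optimal policy for the measure-valued MDP need not be continuous in $\mu$, the actions $\tilde g(\mu^N_t)$ and $\tilde g(\mu^\infty_t)$ can be unrelated, and the recursion $E[\|\mu^N_{t+1}-\mu^\infty_{t+1}\|]\le K\,E[\|\mu^N_t-\mu^\infty_t\|]+2M_N$ has no justification. This cannot be patched by the open-loop trick, because the $N$-agent near-optimal control is feedback on a random trajectory and has no canonical deterministic action sequence.

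The paper avoids trajectory coupling altogether. It writes the Bellman equations for $K^*_\beta$ and $K^{N,*}_\beta$ at the common point $\mu^N$, exchanges the optimal selectors (feeding $\gamma^N$ with $\Theta^N=\gamma^N(du|x)\mu^N(dx)$ into the infinite-population operator in one case, and sampling $\gamma_\infty$ to build an empirical $\hat\Theta^N$ in the other), observes that the stage costs then coincide exactly, and bounds the continuation terms by
\begin{align*}
\beta\,\|K^*_\beta\|_{Lip}\int\bigl\|F(\mu^N,\gamma)-\mu_1^N\bigr\|\,\eta(d\mu_1^N|\mu^N,\Theta)\;+\;\beta\sup_{\mu\in\P_N(\mathds{X})}\bigl|K^*_\beta(\mu)-K^{N,*}_\beta(\mu)\bigr|,
\end{align*}
where the integral is a \emph{one-step} empirical-concentration estimate of order $(1+\delta_T)\bar M_N$ and the second term is absorbed by rearranging (the $\beta$-contraction). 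The only regularity ever invoked is $\|K^*_\beta\|_{Lip}\le C/(1-\beta K)$, i.e.\ Lipschitz continuity of the optimal \emph{value function} (via Assumption~\ref{main_assmp} and $\beta K<1$), never of any policy. That is exactly the ingredient your reverse direction is missing, and to close the argument you would essentially have to switch to this fixed-point scheme.
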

\begin{proof}
The proof can be found in the appendix \ref{finite_inf_dif_proof}.
\end{proof}

\subsubsection{Error Bounds for Closed Loop Control} In this section, we will assume that the agents find and agree on an  optimal policy for the control problem using the agreed-upon mismatched model $\hat{c}, \hat{\mathcal{T}}$ with infinite agent dynamics. However, unlike open-loop control, to execute this policy, they observe the empirical state distribution of the team of $N$-agents, say $\mu_t^N$ at time $t$ and apply $\hat{\gamma}(\cdot|x,\mu_t^N)$. We summarize the application of this policy as follows: 
\begin{itemize}
\item Collectively agree on a policy $\hat{g}$ as in (\ref{closed_loop}) according to the agreed upon estimate model $\hat{\mathcal{T}},\hat{c}$ that satisfies (\ref{miss}). Note that this policy is an optimal policy for the infinite population dynamics under the estimate model.
\item {\it Observe} the {\it correct} mean-field term ${\mu}_t$.
\item Find the randomized agent level policy $\hat{\gamma}$ using $\hat{g}({\mu}_t) = \hat{\gamma}(\cdot|x_t^i,{\mu}_t)$
\item Observe local state $x_t^i$, and apply action $u_t^t\sim\hat{\gamma}(\cdot|x_t^i,{\mu}_t)$.
\end{itemize}

We denote the incurred cost under this policy by $K_\beta^N(\mu^N,\hat{\gamma})$ for some initial state distribution $\mu^N$.
\begin{theorem}
Under Assumption \ref{main_assmp}, if each agent follows the steps summarized above, we then have that
\begin{align*}
K_\beta^N(\mu^N,\hat{\gamma}) - K_\beta^{N,*}(\mu^N)\leq \lambda  \frac{2(\beta C - \beta K +1) }{ (1-\beta)^2(1-\beta K)}  + M_N  \frac{4\beta C}{(1-\beta)(1-\beta K)} 
\end{align*}
where $K=(K_f + \delta_T)$ and $C=(\|c\|_\infty + K_c)$.
\end{theorem}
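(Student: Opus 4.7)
The plan is to decompose the regret with a triangle inequality that inserts the mismatched infinite-population optimum $\hat{K}_\beta$ and the true infinite-population optimum $K_\beta^*$:
\begin{align*}
K_\beta^N(\mu^N,\hat{\gamma}) - K_\beta^{N,*}(\mu^N)
&\leq \left|K_\beta^N(\mu^N,\hat{\gamma}) - \hat{K}_\beta(\mu^N)\right| \\
&\quad + \left|\hat{K}_\beta(\mu^N) - K_\beta^*(\mu^N)\right| + \left|K_\beta^*(\mu^N) - K_\beta^{N,*}(\mu^N)\right|.
\end{align*}
Two of the three summands are off-the-shelf: Lemma \ref{mod_dis} handles the model-mismatch gap $|\hat{K}_\beta - K_\beta^*|$, contributing $\lambda(\beta C - \beta K +1)/((1-\beta)(1-\beta K))$, while Lemma \ref{finite_inf_dif} handles the finite-versus-infinite-population gap, contributing $2\beta C M_N/((1-\beta)(1-\beta K))$. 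The real work is in bounding the first summand, which measures how well the closed-loop policy $\hat{\gamma}(\cdot|x,\mu^N_t)$ deployed in the actual $N$-agent system tracks the mismatched infinite-population value.

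For this first summand I would mirror the Bellman unrolling of Theorem \ref{closed_inf}. Writing the one-step recursions
\begin{align*}
K_\beta^N(\mu,\hat{\gamma}) &= k(\mu,\hat{\gamma}_\mu) + \beta\, E\!\left[K_\beta^N(\mu_{\bf x_1},\hat{\gamma})\right], \\
\hat{K}_\beta(\mu) &= \hat{k}(\mu,\hat{\gamma}_\mu) + \beta\, \hat{K}_\beta(\hat{F}(\mu,\hat{\gamma}_\mu)),
\end{align*}
the immediate-cost discrepancy is at most $\lambda$ (since when $\mu = \mu_{\bf x_0}$ is empirical, $E[k_N(\mu,\hat{\gamma}_\mu)\mid {\bf x_0}] = k(\mu,\hat{\gamma}_\mu)$). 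For the one-step-ahead term I would insert $K_\beta^*$ twice, once at the stochastic argument $\mu_{\bf x_1}$ and once at the deterministic argument $\hat{F}(\mu,\hat{\gamma}_\mu)$, and exploit (i) the Dobrushin-driven Lipschitz bound $\|K_\beta^*\|_{\mathrm{Lip}} \leq C/(1-\beta K)$, together with (ii) Lemma \ref{mod_dis} applied at both $\mu_{\bf x_1}$ and $\hat{F}(\mu,\hat{\gamma}_\mu)$. The crucial step is controlling
\begin{align*}
E\!\left[\|\mu_{\bf x_1} - \hat{F}(\mu,\hat{\gamma}_\mu)\|\right] \leq E\!\left[\|\mu_{\bf x_1} - F(\mu,\hat{\gamma}_\mu)\|\right] + \|F(\mu,\hat{\gamma}_\mu) - \hat{F}(\mu,\hat{\gamma}_\mu)\|,
\end{align*}
where the first piece is an empirical-concentration term handled as in Lemma \ref{meas_diff} (producing an $M_N$, possibly with a factor two from the joint state--action empirical), and the second is bounded by $\lambda$ via the model-mismatch hypothesis (\ref{miss}).

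After taking $\sup_\mu$ on both sides and solving the resulting fixed-point inequality
$\sup|K_\beta^N - \hat{K}_\beta| \leq \lambda + \beta \sup|K_\beta^N - \hat{K}_\beta| + (\text{Lemma \ref{mod_dis} terms}) + \beta\tfrac{C}{1-\beta K}(M_N + \lambda)$,
the $(1-\beta)$ factor absorbed into the denominator together with the $(1-\beta)$ already present in Lemma \ref{mod_dis} yields a $\lambda$-contribution of order $(\beta C - \beta K +1)/((1-\beta)^2(1-\beta K))$ and an $M_N$-contribution of order $\beta C/((1-\beta)(1-\beta K))$. Adding the three contributions of the initial triangle decomposition then produces exactly the claimed bound. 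The main obstacle is the bookkeeping: each application of Lemma \ref{mod_dis} inflates a coefficient by $(1-\beta)^{-1}$ while also producing a $(\beta C - \beta K+1)$ factor, and one must carefully route empirical-concentration terms through the Lipschitz constant of $K_\beta^*$ rather than through $\hat{K}_\beta$ (whose Lipschitz constant is less transparent), so that only the finite-population fluctuation $M_N$ — and not an additional model-mismatch factor — rides along with it.
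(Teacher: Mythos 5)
Your proposal is correct and follows essentially the same route as the paper: the same three-term triangle decomposition invoking Lemmas \ref{mod_dis} and \ref{finite_inf_dif}, the same Bellman unrolling for the term $|K_\beta^N(\cdot,\hat{\gamma})-\hat{K}_\beta|$ with $K_\beta^*$ inserted at both $\mu_{\bf x_1}$ and $\hat{F}(\mu,\hat{\gamma})$, and the same routing of the empirical fluctuation through $\|K_\beta^*\|_{Lip}$. The only detail to pin down is that the concentration term is $(1+\delta_T)M_N\le 2M_N$ rather than $M_N$ (which you already flag as a possible factor of two), and that is exactly what produces the coefficient $4\beta C$ in the final bound.
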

\begin{proof}
The proof follows very similar steps to the results we have proved earlier. 
 For some $\hat{\mu}_0=\mu_0 = \mu_{\bf x_0}=\mu^N$
\begin{align}\label{closed_init_bound}
K_\beta^N(\mu^N,\hat{\gamma}) - K_\beta^{N,*}(\mu^N)  \leq &\left|K_\beta^N(\mu^N,\hat{\gamma}) - \hat{K}_\beta(\mu^N)\right|+ \left|  \hat{K}_\beta(\mu^N) - K^*_\beta(\mu^N)\right| \nonumber\\
&+ \left|  K^*_\beta(\mu^N) - K_\beta^{N,*}(\mu^N) \right|.
\end{align}
The second term above is bounded by Lemma \ref{mod_dis}, the last term is bounded by Lemma \ref{finite_inf_dif}. For the first term we write the Bellman equations:
\begin{align*}
&K_\beta^N(\mu^N,\hat{\gamma})  = k(\mu^N,\hat{\gamma}) + \beta \int K_\beta^{N}(\mu_1^N,\hat{\gamma})\eta(d\mu_1^N|\mu^N,\hat{\gamma})\\
& \hat{K}_\beta(\mu^N) = \hat{k}(\mu^N,\hat{\gamma}) + \beta \hat{K}_\beta \left(\hat{F}(\mu^N,\hat{\gamma})  \right).
\end{align*}
We can then write
\begin{align*}
\left|K_\beta^N(\mu^N,\hat{\gamma}) - \hat{K}^*_\beta(\mu^N)\right|& \leq \left| k(\mu^N,\hat{\gamma}) - \hat{k}(\mu^N,\hat{\gamma})\right|\\
& +  \beta \int \left| K_\beta^{N}(\mu_1^N,\hat{\gamma}) - \hat{K}_\beta (\mu_1^N)\right| \eta(d\mu_1^N|\mu^N,\hat{\gamma})\\
& + \beta \int \left| \hat{K}_\beta (\mu_1^N) - \hat{K}_\beta  \left(\hat{F}(\mu^N,\hat{\gamma})  \right)\right| \eta(d\mu_1^N|\mu^N,\hat{\gamma}) \\
& \leq \lambda + \sup_\mu \left|K_\beta^N(\mu,\hat{\gamma}) - \hat{K}_\beta(\mu)\right| \\
& + 2 \beta  \lambda\left( \frac{\beta C- \beta K +1}{(1-\beta)(1-\beta K)}  \right)\\
& +  \beta \int \left| {K}^*_\beta (\mu_1^N) - {K}^*_\beta  \left(\hat{F}(\mu^N,\hat{\gamma})  \right)\right| \eta(d\mu_1^N|\mu^N,\hat{\gamma})  
\end{align*}
Using almost identical arguments that we have used in the proof of Lemma \ref{finite_inf_dif} and Lemma \ref{meas_diff}, we can bound the last term as
\begin{align*}
 &\beta \int \left| {K}^*_\beta (\mu_1^N) - {K}^*_\beta  \left(\hat{F}(\mu^N,\hat{\gamma})  \right)\right| \eta(d\mu_1^N|\mu^N,\hat{\gamma})\\
&  \leq  \beta\|K_\beta^*\|_{Lip} \left( M_N +\delta_T M_N + \lambda \right)
\end{align*}
Re arranging the terms and noting that $\|K_\beta^*\|_{Lip} \leq \frac{C}{1-\beta K}$, we can write that 
\begin{align*}
\sup_{\mu\in \P_N(\mathds{X})} \left|K_\beta^N(\mu,\hat{\gamma}) - \hat{K}_\beta(\mu)\right| \leq M_N \frac{2\beta C}{(1-\beta)(1-\beta K)} + \lambda \frac{(1+\beta )(\beta C - \beta K +1) }{ (1-\beta)^2(1-\beta K)}.
\end{align*}
Combining this bound with (\ref{closed_init_bound}), one can show that 
\begin{align*}
K_\beta^N(\mu^N,\hat{\gamma}) - K_\beta^{N,*}(\mu^N) \leq \lambda  \frac{2(\beta C - \beta K +1) }{ (1-\beta)^2(1-\beta K)}  + M_N  \frac{4\beta C}{(1-\beta)(1-\beta K)} 
\end{align*}

\end{proof}

\section{Numerical Study} We now present a numerical example to verify the results we have established  in the earlier sections.

We consider a multi-agent taxi service model where each agent represents a taxi. The state and action spaces are binary such that $\mathds{X}=\mathds{U}=\{0,1\}$. We assume that at any given time a given zone is in either a surge or a non-surge mode. The state variable $X_t^i$ represents the location of the agent $i$
\begin{itemize}
\item $X_t^i=0$ $\rightarrow$ agent is in a surge zone (high demand)
\item $X_t^i=1$ $\rightarrow$ agent is in a non-surge zone (low demand)
\end{itemize}
The action variable represents the movement decisions:
\begin{itemize}
\item $U_t^i=0$ $\rightarrow$ remains where they are
\item $U_t^i=1$ $\rightarrow$ relocates to another area.
\end{itemize}
The cost structure is defined as follows:
\begin{itemize}
\item If an agent is in a non-surge zone ($X_t^i=1$),  they incur a cost 
$S$ due to lost earnings
\item If an agent relocates ($U_t^i=1$), they receive a cost $R$, for movement expenses.
\item Furthermore, to encourage a balanced distribution, we penalize deviations from  $40$\%-$60$\% distribution by introducing a cost $10\times (\mu(0)-0.4)^2$ where $\mu(0)$ is the fraction of agents in the surge zones. 
\end{itemize}

For the dynamics, we assume that a non-surge area has a fixed probability $0.2$ of becoming a surge area in the next time step. Furthermore, we assume that a surge area has a probability $0.7\times\mu(0)+0.2$ of becoming a non-surge area, indicating that as more drivers there are in a surge area ($\mu(0)$ is high), the likelihood of it remaining a surge zone decreases (due to increased supply). This then defines the transition probabilities as follows:
\begin{align*}
&Pr(X_{t+1}^i=1|X_t^i=0,U_t^i=0,\mu)=0.7\times\mu(0)+0.2\\
&Pr(X_{t+1}^i=1|X_t^i=0,U_t^i=1,\mu)=0.8\\
&Pr(X_{t+1}^i=1|X_t^i=1,U_t^i=0,\mu)=0.8\\
&Pr(X_{t+1}^i=1|X_t^i=1,U_t^i=1,\mu)=0.7\times\mu(0)+0.2
\end{align*}
We set the parameters as $R=1$, $S=7$ and $\beta=0.7$. 

\noindent{\bf Near optimality of learned models and infinite population approximations.} 
Figure \ref{val_comp} shows the value loss for different values of the number of agents in the system. We graph the loss functions under 3 settings:
\begin{itemize}
\item The optimal policy for the infinite population model with perfect knowledge of transition dynamics and costs.
\item The estimate policy for the infinite population model, where the transition-cost function is learned using discretization basis functions based on the discretization of the measure space $\P(\mathds{X})$ into 6 subsets (see Section \ref{disc_model}).
\item The estimate policy for the infinite population model, where the transition-cost function is learned using a class of basis functions:
\begin{align*}
{\bf\phi}(\mu)= [1, \mu(0), \mu(0)^2, \mu(0)^3, \sin(\mu(0)), \cos(\mu(0))  ].
\end{align*}
Note that the cost and the transitions are perfectly linear under the basis functions $[1,\mu(0),\mu(0)^2]$. 

\end{itemize}
For the loss, we compare the value of the learned approximate policy with the optimal value in an infinite population environment. Furthermore, we assume that the initial distribution is $\mu_0=1/2\delta_0+1/2\delta_1$. 

In the figure, we also plot a scaled $\frac{1}{\sqrt{N}}$ line which represents the decay rate of the empirical consistency term $M_N$ defined in (\ref{emp_bound}). As verified by the results, the loss in all cases decays at a rate similar to $\frac{1}{\sqrt{N}}$. 

We also observe that the policies for the learned model with polynomial basis functions perform as well as the policies under perfect model knowledge, which is expected as the model is perfectly linear for these basis functions. 

For the learned model under discretization, there is a small performance gap, which is also expected since the model is not perfectly linear under discretization basis functions. Thus, the learned model does not perfectly match the true model under discretization.
\begin{figure}[h]  
    \centering
    \includegraphics[width=0.6\textwidth]{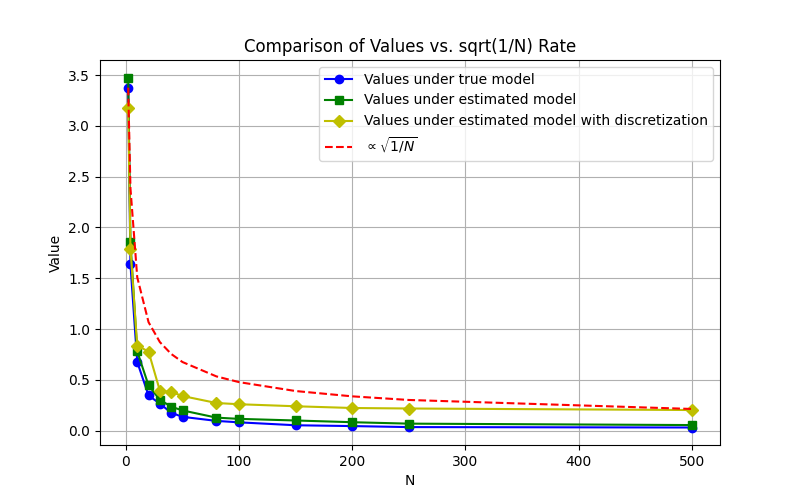} 
    \caption{Value comparison under different policies.}
    \label{val_comp} 
\end{figure}

\noindent {\bf Lack of exploration without common randomness.} Another significant observation from the previous sections about the exploration is also verified in this numerical study. In particular, when agents perform learning individually, we observe that the mean-field term tends to get stuck in certain regions without common randomness. However, if agents choose their actions based on a common randomness, then exploration becomes more efficient as seen in Figure \ref{expo_comp}. In the right graph, the agents follow a policy of the form $\gamma^i(\cdot|x,w^i)$ where $w^i$ is an i.i.d. noise term that is independent across the agents which results in a deterministic flow of the mean-field term, and results in poor exploration. In the graph on the left, the agents follow exploration policies of the form $\gamma(\cdot|x,w^0)$ where $w^0$ is a common noise that is shared by all agents. As a result, the flow of the mean-field term becomes stochastic and a better exploration is observed.
\begin{figure}[h]  
    \centering
    \includegraphics[width=0.45\textwidth]{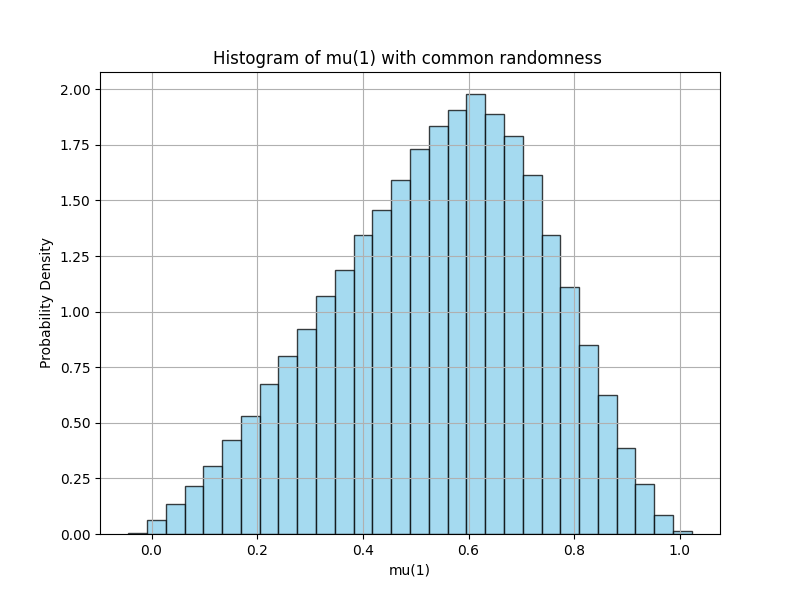} \includegraphics[width=0.45\textwidth]{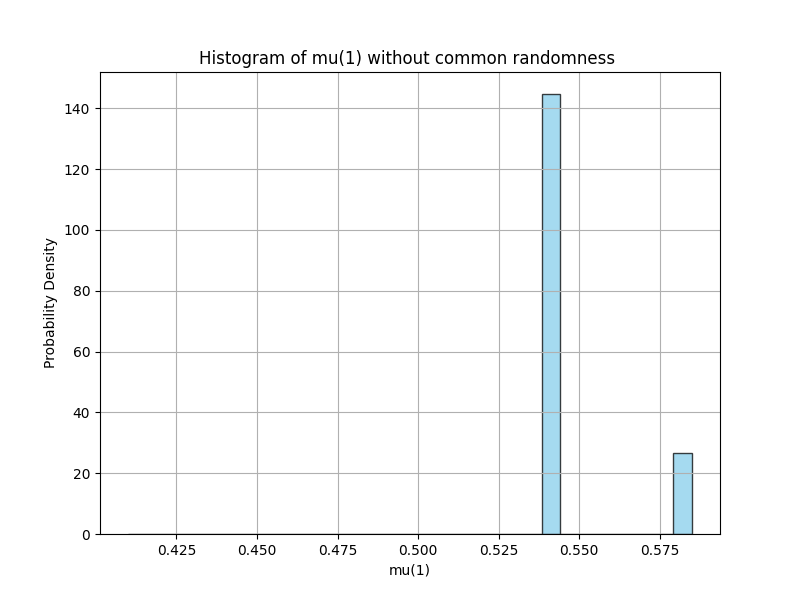} 
    \caption{Learned regions with and without common noise.}
    \label{expo_comp} 
\end{figure}

\section{Conclusion} We have studied model-based learning methods for mean-field control problems using linear function approximations, focusing on both fully coordinated and independent learning approaches. We have observed that full decentralization is generally not possible even when the agents agree on a common model. For the independent learning method, although agents do not need to share their local state information for the convergence, a certain level of coordination is inevitable especially for the exploration phase of the control problem which is done using a common noise process. For the learned models, we have provided error analysis which stems from two main sources (i) modeling mismatch due to linear function approximation, (ii) error arising from the infinite population approximation. 

We have observed that the exploration is a key challenge in the learning of mean-field control systems. The analysis in the paper suggests that the stochastic controllability of the mean-field systems is closely related to the exploration problem. A natural future direction is then to further analysis of the controllability and exploration properties of the mean-field control system.

\acks{E. Bayraktar is partially supported by the National Science Foundation under grant DMS-2106556 and by the Susan M. Smith chair.}

\appendix

\section{Proof of Proposition \ref{sgd}} \label{sgd_proof}

{\bf Step 1}.
We first show that $E\left[\|v_t-v^*\|^2\right]$ remains uniformly bounded over $t$. We write
\begin{align}\label{fb_step}
\|v_{t+1}-v^*\|^2 \leq  \|v_t - v^*\|^2 - 2\alpha_t \langle \nabla g(s_t,v_t), v_t-v^*\rangle + \alpha_t^2 \nabla^2g(s_t,v_t)
\end{align}
For $E[\nabla^2 g(s_t,v_t)] $ we have that 
\begin{align*}
&E[\nabla^2 g(s_t,v_t)]  = E\left[ \left| 2k(s_t)  (k(s_t)^\intercal v_t - h(s_t)  )  \right|^2\right]\leq K E \left[ 2\|k(s_t)\|^2 \|v_t\|^2 + 2\|h(s_t)\|^2\right]\\
& \leq K E\left[   \|v_t\|^2 +1 \right] \leq K \left(E \left[  \|v_t-v^*\|^2 \right] + 1\right)
\end{align*}
where the generic constant $K<\infty$ may represent different values at different steps.  Denoting by $A_t:= E\left[\|v_t-v^*\|^2\right]$, if we take the expectation on both sides of (\ref{fb_step}) we can write
\begin{align}\label{square_step}
A_{t+1}&  \leq A_t - 2\alpha_t E\left[ \langle \nabla g(s_t,v_t), v_t - v^* \rangle\right] + \alpha_t^2 K A_t +  \alpha_t^2 K     \nonumber\\
&\leq  A_t - 2\alpha_t E\left[ g(s_t,v_t) - g(s_t,v^*) \right] + \alpha_t^2 K A_t +  \alpha_t^2 K 
\end{align}
where at the last step we used the convexity of $g(s_t,v_t)$ for every $s_t$. 
 We now introduce $\hat{s}_t$ which are independent over $t$ and each $\hat{s}_t$ is distributed according to $\pi(\cdot)$.  For the middle term above we write
\begin{align}\label{middle_term}
 -2\alpha_t E\left[ g(s_t,v_t) - g(s_t,v^*) \right] =  & -2\alpha_t E\left[   \left(g(s_t,v_t) - g(\hat{s}_t,v_t)\right)\right]  \nonumber\\
& -2\alpha_t   E\left[   \left(g(\hat{s}_t,v_t) - g(\hat{s}_t,v^*)\right)\right] \nonumber \\
& -2\alpha_t   E\left[  \left(g(\hat{s}_t,v^*) - g(s_t,v^*)\right)\right] 
\end{align}
where the expectation is with respect to the independent coupling between $s_t,\hat{s}_t$.

 We denote by 
\begin{align*}
&b^1_t =  -2\alpha_t E\left[   \left(g(s_t,v_t) - g(\hat{s}_t,v_t)\right)\right] \\
&b_t^2 = -2\alpha_t   E\left[   \left(g(\hat{s}_t,v_t) - g(\hat{s}_t,v^*)\right)\right] \nonumber \\
&b_t^3 = -2\alpha_t   E\left[  \left(g(\hat{s}_t,v^*) - g(s_t,v^*)\right)\right] .
\end{align*}
For $b_t^1$, we consider its absolute value to and write
\begin{align}\label{b_1}
&|b_t^1| \leq 2\alpha_t E\left[   \left|g(s_t,v_t) - g(\hat{s}_t,v_t)\right|\right] \nonumber\\
&\leq 2\alpha_t E\left[  \left| (k(s_t)^\intercal v_t  -h(s_t) )^2 -    (k(\hat{s}_t)^\intercal v_t  -h(\hat{s}_t) )^2  \right|\right] \nonumber\\
&\leq  2\alpha_t E\left[    \left| \left(k(s_t)^\intercal - k(\hat{s}_t)^\intercal \right) v_t\right| \left| \left(k(s_t)^\intercal + k(\hat{s}_t)^\intercal \right)v_t - h(s_t) - h(\hat{s}_t)  \right|    \right]\nonumber\\
& \leq 2\alpha_tE\left[  \left( 2\|k\|_\infty \| k(s_t) - k(\hat{s}_t)\| \|v_t\|^2 +   2\|h\|_\infty \| k(s_t) - k(\hat{s}_t)\| \|v_t\|  \right)  \right]\nonumber\\
& \leq  2\alpha_t  K E\left[  \|k(s_t) - k(\hat{s}_t)\| \right] E \left[  \|v_t\|^2 \right] +   2\alpha_t  K E\left[ K \|k(s_t) - k(\hat{s}_t)\| \right] E \left[  \|v_t\| \right]  \nonumber\\
&\leq 2\alpha_t K E\left[  \|k(s_t) - k(\hat{s}_t)\| \right] E \left[  \|v_t -v^*\|^2 \right] 
\end{align}
where we used generic constant $K<\infty$ for the above analysis that might have different values at different steps. Furthermore, we used the inequality $\|v_t\|^2 \leq 2 \|v_t-v^*\|^2 + 2\|v^*\|^2$.   We also assume that $\|v_t\| \geq 1$ to use $\|v_t\| \leq \|v_t\|^2$, note that this is without loss of generality as we are trying to show that $E\|v_t-v^*\|^2$ is bounded, and for $\|v_t\| \leq 1$, the boundedness is immediate.   For the following analysis, we will denote by
\begin{align*}
\epsilon_t:= E\left[  \|k(s_t) - k(\hat{s}_t)\| \right].
\end{align*}
We now consider the series $\sum_{t=1}^\infty \alpha_t \epsilon_t$. Since $s_t$ is ergodic with a geometric rate with invariant measure $\pi(\cdot)$ and $\hat{s}_t\sim \pi(\cdot)$, we have that
\begin{align}\label{erg_ces_sum}
\sum_{t=1}^\infty \alpha_t\epsilon_t <\infty
\end{align}

We now go back to (\ref{square_step}) 
\begin{align*}
A_{t+1}&  \leq  A_t - 2\alpha_t E\left[ g(s_t,v_t) - g(s_t,v^*) \right] + \alpha_t^2 K A_t +  \alpha_t^2 K  \\
& \leq A_t + |b_t^1| + b_t^2 + b_t^3 +\alpha_t^2 K A_t\\
& \leq A_t + 2\alpha_t K \epsilon_t A_t + 2\alpha_t K \epsilon_t + b_t^2 + b_t^3 + \alpha_t^2 K A_t +  \alpha_t^2 K  \\
&\leq (1 + 2\alpha_t K \epsilon_t +   \alpha_t^2 K )A_t + b_t^2 + b_t^3  +  \alpha_t^2 K .
\end{align*}
For the following we denote by 
\begin{align*}
c_t =(1 + 2\alpha_t K \epsilon_t +   \alpha_t^2 K )
\end{align*}
Note that one can show the infinite product $\prod_{t=1}^\infty c_t$ converges if and only if the sum
\begin{align*}
\sum_{t=1}^\infty2\alpha_t K \epsilon_t +   \alpha_t^2 K 
\end{align*}
converges. We have shown that the sum  $\sum_{t=1}^\infty \alpha_t\epsilon_t$ is convergent due to geometric ergodicity, and we also have that $\alpha_t^2$ is summable. Thus, we write
\begin{align*}
\prod_{t=1}^\infty c_t < C
\end{align*}
for some $C<\infty$. One can then iteratively show that 
\begin{align*}
A_{t+1}& \leq \prod_{n=1}^{t} c_n A_0 + C \sum_{n=1}^t \left(b_n^2 + b_n^3   +  \alpha_n^2 K  \right)\\
& \leq C A_0 + C \sum_{n=1}^t \left(b_n^2 + b_n^3 + \alpha_n^2 K    \right).
\end{align*}
Consider $b^2_n=  - 2\alpha_n E\left[ g(\hat{s}_n,v_n) - g(\hat{s}_n,v^*) \right] $; since $\hat{s}_t\sim \pi(\cdot)$ for all $t$, and since $v^*=\argmin_v G(v) = \argmin_v \int g(s,v)\pi(ds)$, $b^2_n\leq 0$ for all $n$.    Thus, we can simply remove $b_n^2$ terms to get a further upperbound. For $b_n^3$, we have that 
\begin{align*}
\sum_{n=1}^\infty |b_n^3| \leq \sum_{t=1}^\infty 2\alpha_t \left|g(\hat{s}_t,v^*) - g(s_t,v^*)\right| <\infty 
\end{align*}
using an identical argument we used to show $\sum \alpha_t \epsilon_t < \infty$. 
       In particular,
\begin{align*}
\lim_{t\to \infty} A_t \leq \lim_{t\to \infty} C A_0 + C \sum_{n=1}^t   \left( b_n^3 + \alpha_n^2 K    \right) < \infty
\end{align*}
which shows that $E\|v_t-v^*\|^2$ is bounded uniformly over $t$, which also implies that $E\|v_t\|^2$ is bounded.

{\bf Step 2}. Now we have the boundedness, we go back to (\ref{square_step}); using the bound on $A_t$ (only for the second $A_t$ in (\ref{square_step})), and summing over the terms, we can write
\begin{align*}
A_{N+1}-A_0 \leq \sum_{t=1}^N \left(A_{t+1} - A_t\right)  \leq \sum_{t=1}^N  -2\alpha_t  E\left[ g(s_t,v_t) - g(s_t,v^*) \right]   + \sum_{t=1}^N  \alpha_t^2K
\end{align*}
again using the boundedness of $A_t$, and the fact that $\sum_{t=1}^\infty\alpha_t^2 <\infty$  and sending $N\to\infty$, we get
\begin{align*}
  E\left[ \sum_{t=1}^\infty 2\alpha_t  \left(g(s_t,v_t) - g(s_t,v^*)\right)\right]  <\infty
\end{align*}
We now introduce $\hat{s}_t$ which are independent over $t$ and each $\hat{s}_t$ is distributed according to $\pi(\cdot)$. We then write
\begin{align}\label{fin_sum}
  &E\left[ \sum_{t=1}^\infty \underbrace{2\alpha_t   \left(g(s_t,v_t) - g(\hat{s}_t,v_t)\right)}_{b_t^1}\right]  \nonumber\\
&+   E\left[ \sum_{t=1}^\infty   \underbrace{ 2\alpha_t \left(g(\hat{s}_t,v_t) - g(\hat{s}_t,v^*)\right)}_{b_t^2}\right] \nonumber \\
& +   E\left[ \sum_{t=1}^\infty  \underbrace{ 2\alpha_t  \left(g(\hat{s}_t,v^*) - g(s_t,v^*)\right)}_{b_t^3}\right]  <\infty
\end{align}
where we overwrite the definitions of $b_t^1$, $b_t^2$, and $b_t^3$ (only changing the signs of these terms, see (\ref{middle_term})). 

Recall the analysis for $b_t^1$ in (\ref{b_1}), together with the uniform boundedness of $A_t= E\left \|v_t - v^*\|^2  \right]$ over $t$, we can write that 
\begin{align*}
  &E\left[ \sum_{t=1}^\infty \left|b_t^1\right|\right]  \leq   \sum_{t=1}^\infty 2\alpha_t  K  E\left[\|k(s_t) - k(\hat{s}_t)\|  \right]  <\infty
\end{align*}
where we exchange the sum and expectation with monotone convergence theorem, and where the last step follows from what we have shown in (\ref{erg_ces_sum}).

For the last term similarly, we have that $E\left[ \sum_{t=1}^\infty b_t^3\right] <\infty$, from (\ref{erg_ces_sum}), since $s_t$ is geometrically ergodic with invariant measure $\pi$ and $\hat{s}_t\sim \pi(\cdot)$ and $v^*$ is fixed. 

Going back to (\ref{fin_sum}), now that we have shown the last and the first terms are finite, we can write
\begin{align*}
&   E\left[ \sum_{t=1}^\infty 2\alpha_t  \left(g(\hat{s}_t,v_t) - g(\hat{s}_t,v^*)\right)\right] <\infty.
\end{align*}
Since $\hat{s}_t$ is i.i.d and distributed according to $\pi(\cdot)$, the above also implies that
\begin{align*}
&   E\left[ \sum_{t=1}^\infty 2\alpha_t  \left(G(v_t) - G(v^*)\right)\right] <\infty
\end{align*}
which in turn implies that 
\begin{align*}
\sum_{t=1}^\infty 2\alpha_t  \left(G(v_t) - G(v^*)\right) <\infty
\end{align*}
almost surely. Furthermore, since $\alpha_t$ is not summable, and $\left(G(v_t) - G(v^*)\right) \geq 0$ (as $v^*$ achieves the minimum of $G(v)$), we must have that 
\begin{align*}
G(v_t)\to G(v^*), \text{ almost surely}.
\end{align*}

\section{Proof of Lemma \ref{mod_dis}}\label{mod_dis_proof}
We begin the proof by writing the Bellman equations
\begin{align*}
&\hat{K}_\beta(\mu) = \hat{k}(\mu,\hat{\gamma}) + \beta \hat{K}_\beta(\hat{F}(\mu,\hat{\gamma}))\\
& K_\beta^*(\mu) = k(\mu,\gamma) +\beta K_\beta^*(F(\mu,\gamma))
\end{align*}
where $\hat{\gamma}$ and $\gamma$ are optimal agent-level policies that achieves the minimum at the right hand side of the Bellman equations respectively. We can use then use the same agent level policies by exchanging them to get the following upper-bound 
\begin{align}\label{mid_dif}
&\left| \hat{K}_\beta (\mu) - K_\beta^*(\mu)\right| \leq |\hat{k}(\mu,\gamma) - k(\mu,\gamma)| + \beta \left| \hat{K}_\beta(\hat{F}(\mu,\gamma)) - K_\beta^*(F(\mu,\gamma))\right|\nonumber\\
&\leq  |\hat{k}(\mu,\gamma) - k(\mu,\gamma)| + \beta \left| \hat{K}_\beta(\hat{F}(\mu,\gamma)) - K_\beta^*(\hat{F}(\mu,\gamma))\right|+ \beta  \left| {K}^*_\beta(\hat{F}(\mu,\gamma)) - K_\beta^*({F}(\mu,\gamma))\right|\nonumber\\
&\leq \lambda + \beta \sup_\mu\left| \hat{K}_\beta (\mu) - K_\beta^*(\mu)\right|  + \beta \|K_\beta^*\|_{Lip} \| \hat{F}(\mu,\gamma) - F(\mu,\gamma)\|
\end{align}
We have that 
\begin{align*}
& \hat{F}(\mu,\gamma) - F(\mu,\gamma)\| \leq \left\| \int \hat{\mathcal{T}}(\cdot|x,u,\mu)\gamma(du|x)\mu(dx) -  \int \mathcal{T}(\cdot|x,u,\mu)\gamma(du|x)\mu(dx)  \right\|\\
&\leq \lambda.
\end{align*}
Hence, by rearranging the terms in (\ref{mid_dif}), we can write
\begin{align*}
&\left| \hat{K}_\beta (\mu) - K_\beta^*(\mu)\right| \leq \frac{\lambda}{1-\beta} \left( 1+\beta \|K_\beta\|_{Lip} \right).
\end{align*}
Finally, a slight modification of \cite[Lemma 6]{bayraktar2023finite} for finite $\mathds{X},\mathds{U}$  can be used to show that
\begin{align*}
\|K_\beta^*\|_{Lip} \leq \frac{C}{1-\beta K}
\end{align*} 
which completes the proof that
\begin{align*}
\left| \hat{K}_\beta (\mu) - K_\beta^*(\mu)\right| \leq \lambda\left( \frac{\beta C- \beta K +1}{(1-\beta)(1-\beta K)}  \right).
\end{align*}

\section{Proof of Lemma \ref{meas_diff}}\label{meas_diff_proof}
 We use the notation $\mu_t= \mu_{\bf x_t}$ for the following analysis. Note that with stochastic realization results, there exists a random variable $v_t$ uniformly distributed on $[0,1]$, and a measurable function $\hat{\gamma}$ such that
\begin{align*}
\hat{\gamma}(x,v_t)
\end{align*}
has the same distribution as  $\hat{\gamma}(\cdot|x,\hat{\mu}_t)$, where we overwrite the notation for simplicity. Let ${\bf \hat{x}}_t$ denote a vector of size $N$ state variables that are distributed according to $\hat{\mu}_t$, i.e. ${\bf \hat{x}}_t= [\hat{x}^1_t,\dots,\hat{x}^N_t]$ such that $\hat{x}_t^i\sim \hat{\mu}_t$ for all $i\in\{1,\dots,N\}$. Furthermore, let ${\bf v}_t$ denote a vector of size $N$ where each element is independent and distributed according to the law of $v_t$. We then study the following conditional expected difference:
\begin{align*}
E\left[ \left\|\mu_{\bf x_{t+1}} - \hat{\mu}_{t+1}\right\|\right] = E \left[  E\left[\left\|\mu_{\bf x_{t+1}} - \hat{\mu}_{t+1}\right\|  |  {\bf x}_t,{\bf \hat{x}_t} , {\bf v_t}  \right]   \right].
\end{align*}

Let $\bf w_t$ denote the vector of size $N$ for the noise variables of the agents at time $t$.  Note that we have ${\bf x}_{t+1} = f({\bf x}_t,{\bf u}_t,{\bf w}_t)$ where $u^i_t = \hat{\gamma}(x^i_t,v^i_t)$ for each $i$.   We also introduce  $\hat{\bf u}_t$  such that $\hat{u}^i_t = \hat{\gamma}(\hat{x}^i_t,v_t^i)$.     

We further introduce another vector of noise variables $\hat{\bf w}_t$ where each element is independently distributed, and the distribution of $\hat{w}_t$ agrees the with the kernel $\hat{\mathcal{T}}(\cdot|x,u,\hat{\mu}_t)$. In other words, we use the functional representation of $\hat{\mathcal{T}}(\cdot|x,u,\hat{\mu}_t)$ where 
\begin{align*}
 \hat{f} (x,u,\hat{\mu}_t,\hat{w}_t) \sim \hat{\mathcal{T}}(\cdot|x,u,\hat{\mu}_t)
\end{align*}
for some measurable $\hat{f}$.

We denote by ${\bf P}(d{\bf w}_t) = P(dw^1_t) \times \dots \times P(dw_t^N)$ denote the distribution of the vector ${\bf w}_t$ where it is assumed that $w_t^i$ and $w_t^j$ are independent for all $i\neq j$. ${\bf \hat{w}}_t$ is also distributed according to ${\bf P}(\cdot)$. For the joint distribution of ${\bf w_t,\hat{w}_t}$, we use a coupling of the form
\begin{align*}
{\bf \Omega}(d {\bf w}_t,d{\bf \hat{w}}_t) = \Omega^1(dw_t^1 ,d\hat{w}_t^1)\times \dots \times \Omega^N(dw_t^N,d\hat{w}_t^N).
 \end{align*}
That is, we assume independence over $i\in{1,\dots,N}$, however, an arbitrary coupling is assumed between the distribution of $w_t^i,\hat{w}_t^i$.  We will later specify the particular selection of coordinate wise couplings  $\Omega^1,\dots,\Omega^N$, however, the following analysis will hold correct for a general selection of $\Omega^1,\dots,\Omega^N$.

 For given realizations of $ {\bf x}_t,{\bf \hat{x}_t} , {\bf v_t}$, we write
\begin{align}\label{tri_bound}
&E\left[\left\|\mu_{\bf x_{t+1}} - \hat{\mu}_{t+1}\right\|  |  {\bf x}_t,{\bf \hat{x}_t} , {\bf v_t}  \right] = \int \left\|\mu_{ f({\bf x}_t,{\bf u}_t,{\bf w}_t)} - \hat{\mu}_{t+1}\right\| P(d{\bf w}_t)\nonumber\\
&= \int  \left\|\mu_{ f({\bf x}_t,{\bf u}_t,{\bf w}_t)} - \hat{\mu}_{t+1}\right\| {\bf \Omega}(d{\bf w}_t, d\hat{\bf w}_t)  \nonumber \\
&\leq  \int  \left\|\mu_{ f({\bf x}_t,{\bf u}_t,{\bf w}_t)} - {\mu}_{\hat{f}(\hat{\bf x}_t, \hat{\bf u}_t , \hat{\bf w}_t) }\right\| {\bf \Omega}(d{\bf w}_t, d\hat{\bf w}_t) +  \int  \left\| {\mu}_{\hat{f}(\hat{\bf x}_t, \hat{\bf u}_t , \hat{\bf w}_t) } -  \hat{\mu}_{t+1}\right\| {\bf \Omega}(d{\bf w}_t, d\hat{\bf w}_t)
\end{align} 
Note that $\hat{\bf x}_t$ is a vector of size $N$ where each entry is independent and distributed according to $\hat{\mu}_t$. Furthermore, $\hat{u}^i_t = \hat{\gamma}(\hat{x}^i_t,v^i_t)$, and thus $\hat{u}^i_t \sim \hat{\gamma}(\cdot|\hat{x}^i_t, \hat{\mu}_t)$ for each $i\in\{1,\dots,N\}$. Thus,  $ {\mu}_{\hat{f}(\hat{\bf x}_t, \hat{\bf u}_t , \hat{\bf w}_t) } $ is an empirical measure for $\hat{\mu}_{t+1}$
For the second term above, we then have:
\begin{align}\label{second_term}
  &E \left[\int  \left\| {\mu}_{\hat{f}(\hat{\bf x}_t, \hat{\bf u}_t , \hat{\bf w}_t) } -  \hat{\mu}_{t+1}\right\| {\bf \Omega}(d{\bf w}_t, d\hat{\bf w}_t)\right] =  E\left[ \int  \left\| {\mu}_{\hat{f}(\hat{\bf x}_t, \hat{\bf u}_t , \hat{\bf w}_t) } -  \hat{\mu}_{t+1}\right\| P(d\hat{\bf w}_t)\right]\nonumber\\
&= E\left[E\left[ \left\| \mu_{\hat{\bf x}_{t+1}} - \hat{\mu}_{t+1} \right \| | \hat{\bf x}_t , \hat{\bf u}_t   \right]\right] = E\left[ \| \mu_{\hat{\bf x}_{t+1}} - \hat{\mu}_{t+1}\| \right] \leq M_N
\end{align}
see $(\ref{emp_bound})$ for the definition of $M_N$.

For the first term in (\ref{tri_bound}); we note that $\mu_{ f({\bf x}_t,{\bf u}_t,{\bf w}_t)} $ and $ {\mu}_{f(\hat{\bf x}_t, \hat{\bf u}_t , \hat{\bf w}_t) }$ are empirical measures, and thus 
for every given realization of ${\bf w}_t$ and ${\bf \hat{w}}_t$, the Wasserstein distance is achieved with a particular permutation of ${ f({\bf x}_t,{\bf u}_t,{\bf w}_t)}$ and ${\hat{f}(\hat{\bf x}_t, \hat{\bf u}_t , \hat{\bf w}_t) }$ combined together. That is, letting $\sigma$ denote a permutation map for the vector ${\hat{f}(\hat{\bf x}_t, \hat{\bf u}_t , \hat{\bf w}_t) }$. we have
\begin{align*}
 \left\|\mu_{ f({\bf x}_t,{\bf u}_t,{\bf w}_t)} - {\mu}_{\hat{f}(\hat{\bf x}_t, \hat{\bf u}_t , \hat{\bf w}_t) }\right\|  = \inf_\sigma \frac{1}{N}\sum_{i=1}^N |f(x_t^i,u_t^i,w_t^i,\mu_{\bf x_t}) - \sigma(\hat{f}(\hat{x}_t^i,\hat{u}_t^i,\hat{w}_t^i,\hat{\mu}_t))|.
\end{align*}
 We will however, consider a particular permutation where 
\begin{align}\label{perm_ach}
&\left\| \int  \mathcal{T}(\cdot|x,u,\mu_{{\bf x}_t}) \mu_{\bf (x_t,u_t)}(du,dx) - \hat{\mathcal{T}}(\cdot|x,u,\hat{\mu}_t) \mu_{(\bf \hat{x}_t , \hat{u}_t)}(du,dx)\right\|\nonumber\\
&= \frac{1}{N}\sum_{i=1}^N \left\| \mathcal{T}(\cdot|x_t^i,u_t^i,\mu_{{\bf x}_t}) -\sigma(\hat{\mathcal{T}}(\cdot|\hat{x}_t^i,\hat{u}_t^u,\hat{\mu}_t))\right\|
\end{align} 
For the following analysis, we will drop the permutation notation $\sigma$ and assume that the given order of $\hat{f}(\hat{\bf x}_t, \hat{\bf u}_t , \hat{\bf w}_t) $ achieves the Wasserstein distance in (\ref{perm_ach}). Furthermore, the coupling ${\bf \Omega}$ is assumed to have the same order of coordinate-wise coupling.

 We then write
\begin{align}\label{bound1}
 & \int  \left\|\mu_{ f({\bf x}_t,{\bf u}_t,{\bf w}_t)} - {\mu}_{\hat{f}(\hat{\bf x}_t, \hat{\bf u}_t , \hat{\bf w}_t) }\right\| {\bf \Omega}(d{\bf w}_t, d\hat{\bf w}_t) \nonumber\\
&\leq \int \frac{1}{N}\sum_{i=1}^N \left| f(x^i_t,u^i_t,w_t^i, \mu_{\bf x_t}) - \hat{f}(\hat{x}_t^i,\hat{u}_t^i,\hat{w}_t^i,\hat{\mu}_t) \right|  {\bf \Omega}(d{\bf w}_t, d\hat{\bf w}_t)\nonumber \\
& =  \frac{1}{N}\sum_{i=1}^N \int  \left| f(x^i_t,u^i_t,w_t^i, \mu_{\bf x_t}) - \hat{f}(\hat{x}_t^i,\hat{u}_t^i,\hat{w}_t^i,\hat{\mu}_t) \right|  {\bf \Omega}(d{\bf w}_t, d\hat{\bf w}_t)\nonumber \\
&= \frac{1}{N}\sum_{i=1}^N \int  \left| f(x^i_t,u^i_t,w_t^i, \mu_{\bf x_t}) - \hat{f}(\hat{x}_t^i,\hat{u}_t^i,\hat{w}_t^i,\hat{\mu}_t) \right|  { \Omega^i}(dw^i_t, d\hat{w}^i_t). 
\end{align}

The analysis thus far, works for any coupling  ${\bf \Omega}(d{\bf w}_t, d\hat{\bf w}_t)$. In particular, the analysis holds for the coupling that satisfies
\begin{align*}
\| \mathcal{T}(\cdot|x_t,u_t,\mu_{\bf x_t}) - \hat{\mathcal{T}}(\cdot|\hat{x}^i_t,\hat{u}^i_t,\hat{\mu}_t)\| =  \int  \left| f(x^i_t,u^i_t,w_t^i, \mu_{\bf x_t}) - \hat{f}(\hat{x}_t^i,\hat{u}_t^i,\hat{w}_t^i,\hat{\mu}_t) \right|  { \Omega^i}(dw^i_t, d\hat{w}^i_t).
\end{align*}
for every $i$ for some coordinate-wise coupling $\Omega^i(dw_t^i,d\hat{w}_t^i)$. Continuing from the term (\ref{bound1}), we can then write 
\begin{align*}
 & \int  \left\|\mu_{ f({\bf x}_t,{\bf u}_t,{\bf w}_t)} - {\mu}_{\hat{f}(\hat{\bf x}_t, \hat{\bf u}_t , \hat{\bf w}_t) }\right\| {\bf \Omega}(d{\bf w}_t, d\hat{\bf w}_t) \nonumber\\
&\leq \frac{1}{N}\sum_{i=1}^N \int  \left| f(x^i_t,u^i_t,w_t^i, \mu_{\bf x_t}) - \hat{f}(\hat{x}_t^i,\hat{u}_t^i,\hat{w}_t^i,\hat{\mu}_t) \right|  { \Omega^i}(dw^i_t, d\hat{w}^i_t)\\
& = \int \frac{1}{N}\sum_{i=1}^N \left\|  \mathcal{T}(\cdot|x_t,u_t,\mu_{\bf x_t}) - \hat{\mathcal{T}}(\cdot|\hat{x}^i_t,\hat{u}^i_t,\hat{\mu}_t)\right\|\\
&=\left\| \int  \mathcal{T}(\cdot|x,u,\mu_{{\bf x}_t}) \mu_{\bf (x_t,u_t)}(du,dx) - \int\hat{\mathcal{T}}(\cdot|x,u,\hat{\mu}_t) \mu_{(\bf \hat{x}_t , \hat{u}_t)}(du,dx)\right\|
\end{align*}
where the last step follows from the particular permutation we consider (see (\ref{perm_ach})).

Furthermore, we also have that
\begin{align}\label{first_term}
&\left\| \int  \mathcal{T}(\cdot|x,u,\mu_{{\bf x}_t}) \mu_{\bf (x_t,u_t)}(du,dx) - \int\hat{\mathcal{T}}(\cdot|x,u,\hat{\mu}_t) \mu_{(\bf \hat{x}_t , \hat{u}_t)}(du,dx)\right\|\nonumber\\
&\leq \left\| \int  \mathcal{T}(\cdot|x,u,\mu_{{\bf x}_t}) \mu_{\bf (x_t,u_t)}(du,dx) - \int{\mathcal{T}}(\cdot|x,u,\mu_{{\bf x}_t}) \mu_{(\bf \hat{x}_t , \hat{u}_t)}(du,dx)\right\|\nonumber\\
& + \left\|    \int{\mathcal{T}}(\cdot|x,u,\mu_{{\bf x}_t}) \mu_{(\bf \hat{x}_t , \hat{u}_t)}(du,dx) -  \int{\mathcal{T}}(\cdot|x,u, \hat{\mu}_t) \mu_{(\bf \hat{x}_t , \hat{u}_t)}(du,dx)\right\|\nonumber\\
& + \left\|  \int{\mathcal{T}}(\cdot|x,u, \hat{\mu}_t) \mu_{(\bf \hat{x}_t , \hat{u}_t)}(du,dx) -  \int\hat{\mathcal{T}}(\cdot|x,u,\hat{\mu}_t) \mu_{(\bf \hat{x}_t , \hat{u}_t)}(du,dx)\right\|\nonumber\\
&\leq \delta_T \|\mu_{\bf x_t} - \mu_{\bf \hat{x}_t}\| + K_f \| \mu_{\bf x_t} - \hat{\mu}_t\| + \lambda
\end{align}
where for the first term we use the following bound:
\begin{align*}
&\left\| \int  \mathcal{T}(\cdot|x,u,\mu_{{\bf x}_t}) \mu_{\bf (x_t,u_t)}(du,dx) - \int{\mathcal{T}}(\cdot|x,u,\mu_{{\bf x}_t}) \mu_{(\bf \hat{x}_t , \hat{u}_t)}(du,dx)\right\|\\
& = \left\| \int  \mathcal{T}(\cdot|x,\hat{\gamma}(x,v^i),\mu_{{\bf x}_t})  \mu_{\bf x_t}(dx) - \int{\mathcal{T}}(\cdot|x,\hat{\gamma}(x,v^i),\mu_{{\bf x}_t}) \mu_{\bf \hat{x}_t}(dx)\right\|\\
& \leq \delta_T \|\mu_{\bf x_t} - \mu_{\bf \hat{x}_t}\| 
\end{align*}

Combining (\ref{tri_bound}), (\ref{second_term}), and (\ref{first_term}), we can then write
\begin{align*}
&E\left[ \left\|\mu_{\bf x_{t+1}} - \hat{\mu}_{t+1}\right\|\right]  \leq M_N  + \delta_T E[ \|\mu_{\bf x_t} - \mu_{\bf \hat{x}_t}\| ] + K_f E[ \| \mu_{\bf x_t} - \hat{\mu}_t\| ] + \lambda\\
&\leq (1+\delta_T)M_N + K E[ \| \mu_{\bf x_t} - \hat{\mu}_t\| ]  + \lambda
\end{align*}
where $K=(\delta_T + K_f)$.  Noting that we have assumed $\mu_{\bf x_0} = \hat{\mu}_0$, this bound implies that
\begin{align*}
&E\left[ \left\|\mu_{\bf x_{t}} - \hat{\mu}_{t}\right\|\right]  \leq \sum_{n=0}^{t-1} K^n( \lambda + 2 M_N )
\end{align*}
where we have used the fact that $\delta_T\leq 1$ to simplify the notation.

\section{Proof of Lemma \ref{finite_inf_dif}}\label{finite_inf_dif_proof}
We start by writing the Bellman equations:
\begin{align*}
&K_\beta^*(\mu^N) = k(\mu^N,\gamma_\infty) + \beta K_\beta^*(F(\mu^N,\gamma_\infty))\\
& K_\beta^{N,*}(\mu^N) = k(\mu^N,\Theta^N) + \beta \int K_\beta^{N,*}(\mu_1^N)\eta(d\mu_1^N|\mu^N,\Theta^N)
\end{align*}
where we assume that an optimal selector for the infinite population problem at $\mu^N$ is $\gamma_\infty$ such that the agents should use the randomized agent-level policy $\gamma_\infty(\cdot|x,\mu^N)$. For the $N$-agent problem, we assume that an optimal state-action distribution at $\mu^N$ is given by some $\Theta^N \in \P_N(\mathds{X}\times\mathds{U})$, which can be achieved by some ${\bf x,u,}$ such that $\mu_{\bf x}=\mu^N$ and $\mu_{(\bf x,u)}=\Theta^N$.

We first  assume that $K_\beta^*(\mu^N) > K_\beta^{N,*}(\mu^N)$. For the infinite population problem, instead of using the optimal selector $\gamma_\infty$, we use a randomized agent-level policy from the finite population problem by writing $\Theta^N(du,dx)=\gamma^N(du|x)\mu^N(dx)$, and letting the agents use $\gamma^N$. We emphasize that the optimal state action distribution for  $N$-agents is not achieved if each agent symmetrically use $\gamma^N(du|x)$, in other words, $\gamma^N$ is not an optimal agent-level policy for the $N$-population problem.  To have the equality $\Theta^N(du,dx)=\gamma_N(du|x)\mu^N(dx)$ the number of agents needs to tend to infinity. We can then write
\begin{align*}
& K_\beta^{*}(\mu^N) - K_\beta^{N,*}(\mu^N)\leq  K_\beta(\mu^N,\gamma^N) - K_\beta^{N,*}(\mu^N)\\
&= k(\mu^N,\gamma^N) - k(\mu^N,\Theta^N) + \beta K_\beta^*\left(F(\mu^N,\gamma^N)\right) - \beta \int K_\beta^{N,*}(\mu_1^N)\eta(d\mu_1^N|\mu^N,\Theta^N)
\end{align*}
Note that
\begin{align*}
 k(\mu^N,\gamma^N)  = \int c(x,u,\mu^N)\gamma^N(du|x)\mu^N(dx) = \int c(x,u,\mu^N)\Theta^N(du,dx) = k(\mu^N,\Theta^N).
\end{align*}
 Hence, we can continue:
\begin{align}\label{bound_inf_big}
& K_\beta^{*}(\mu^N) - K_\beta^{N,*}(\mu^N) \leq \beta K_\beta^*\left(F(\mu^N,\gamma^N)\right) - \beta \int K_\beta^{N,*}(\mu_1^N)\eta(d\mu_1^N|\mu^N,\Theta^N)\nonumber\\
& \leq  \beta \int \left| K_\beta^*\left(F(\mu^N,\gamma^N)\right) - K_\beta^*(\mu_1^N)\right| \eta(d\mu_1^N|\mu^N,\Theta^N)\nonumber\\
&\quad + \beta \int \left|  K_\beta^*(\mu_1^N) - K_\beta^{N,*}(\mu_1^N)\right| \eta(d\mu_1^N|\mu^N,\Theta^N)\nonumber\\
& \leq \beta \|K_\beta^*\|_{Lip} \int \left\| F(\mu^N,\gamma^N) - \mu_1^N\right\| \eta(d\mu_1^N|\mu^N,\Theta^N) + \beta \sup_{\mu\in \P_N(\mathds{X})} \left| K_\beta^*(\mu)-  K_\beta^{N,*}(\mu)\right|.
\end{align}
We now focus on the term $ \int \left\| F(\mu^N,\gamma^N) - \mu_1^N\right\| \eta(d\mu_1^N|\mu^N,\Theta^N) $.  We will follow a very similar methodoly as we have used in the proof of Lemma \ref{meas_diff} with slight differences. We denote by ${\bf P}(d{\bf w}) = P(dw^1) \times \dots \times P(dw^N)$ denote the distribution of the vector ${\bf w}$ where it is assumed that $w^i$ and $w^j$ are independent for all $i\neq j$.  Let ${\bf x,u}$ such that $\mu_{\bf x}=\mu^N$ and $\mu_{\bf (x,u)}=\Theta^N$.  We then have that 
\begin{align*}
 \int \left\| F(\mu^N,\gamma^N) - \mu_1^N\right\| \eta(d\mu_1^N|\mu^N,\Theta^N)  = \int \left\|  F(\mu^N,\gamma^N) -\mu_{f(\bf x,u,w)}\right\| {\bf P}(d{\bf w})
\end{align*}
where $f({\bf x,u,w}) = [f(x^1,u^1,w^1,\mu^N),\dots,f(x^N,u^N,w^N,\mu^N ) ]$. We now introduce $(\hat{x}^i,\hat{u}^i) \sim \Theta^N(du,dx)$ where  $i\in\{1,\dots,N\}$, which are different than the state action vectors ${\bf(x,u)}$ and $\mu_{\bf (\hat{x},\hat{u})}$ forms on empirical measure for  $\Theta^N$ whereas $\mu_{\bf \hat{x}}$ forms an empirical measure for $\mu^N$. We further introduce ${\bf \hat{w}} = [\hat{w}^1,\dots,\hat{w}^N]$.  ${\bf \hat{w}}$ is also distributed according to ${\bf P}(\cdot)$. For the joint distribution of ${\bf w,\hat{w}}$, we use a coupling of the form
\begin{align*}
{\bf \Omega}(d {\bf w},d{\bf \hat{w}}) = \Omega^1(dw^1 ,d\hat{w}^1)\times \dots \times \Omega^N(dw^N,d\hat{w}^N).
 \end{align*}
That is, we assume independence over $i\in{1,\dots,N}$, however, an arbitrary coupling is assumed between the distribution of $w^i,\hat{w}^i$.  We will later specify the particular selection of coordinate wise couplings  $\Omega^1,\dots,\Omega^N$. We write
\begin{align*}
& \int \left\|  F(\mu^N,\gamma^N) -\mu_{f(\bf x,u,w)}\right\| {\bf P}(d{\bf w}) \\
&\leq  E\left[  \int \left\|  F(\mu^N,\gamma^N) - \mu_{f ({\bf \hat{x},\hat{u},\hat{w}})}\right\|   +   \left\|   \mu_{f ({\bf \hat{x},\hat{u},\hat{w}})} -   \mu_{f(\bf x,u,w)}\right\| {\bf \Omega}(d{\bf w}, d{\bf \hat{w}})  \right]
\end{align*}
where the expectation is with respect to the random realizations of $(\hat{x}^i, \hat{u}^i)\sim \Theta^N(du,dx)$. The first term corresponds to the expected difference between the empirical measures of $\mu_1=F(\mu^N,\gamma^N)$ and $\mu_1$ itself, and thus is bounded by $M_N$.

For the second term, we note that $\mu_{ f({\bf x},{\bf u},{\bf w})} $ and $ {\mu}_{f(\hat{\bf x}, \hat{\bf u} , \hat{\bf w}) }$ are empirical measures, and thus 
for every given realization of ${\bf w}$ and ${\bf \hat{w}}$, the Wasserstein distance is achieved with a particular permutation of ${ f({\bf x},{\bf u},{\bf w})}$ and ${{f}(\hat{\bf x}, \hat{\bf u} , \hat{\bf w}) }$ combined together. That is, letting $\sigma$ denote a permutation map for the vector ${{f}(\hat{\bf x}, \hat{\bf u} , \hat{\bf w}) }$. we have
\begin{align*}
 \left\|\mu_{ f({\bf x},{\bf u},{\bf w})} - {\mu}_{\hat{f}(\hat{\bf x}, \hat{\bf u} , \hat{\bf w}) }\right\|  = \inf_\sigma \frac{1}{N}\sum_{i=1}^N |f(x^i,u^i,w^i,\mu^N) - \sigma({f}(\hat{x}^i,\hat{u}^i,\hat{w}^i,\mu^N))|.
\end{align*}
 We will however, consider a particular permutation where 
\begin{align*}
&\left\| \int  \mathcal{T}(\cdot|x,u,\mu^N) \mu_{\bf (x,u)}(du,dx) - {\mathcal{T}}(\cdot|x,u,{\mu^N}) \mu_{(\bf \hat{x} , \hat{u})}(du,dx)\right\|\nonumber\\
&= \frac{1}{N}\sum_{i=1}^N \left\| \mathcal{T}(\cdot|x^i,u^i,\mu^N) -\sigma({\mathcal{T}}(\cdot|\hat{x}^i,\hat{u}^u,{\mu}^N))\right\|
\end{align*} 
For the following analysis, we will drop the permutation notation $\sigma$ and assume that the given order of ${f}(\hat{\bf x}, \hat{\bf u} , \hat{\bf w}) $ achieves the Wasserstein distance above. Furthermore, the coupling ${\bf \Omega}$ is assumed to have the same order of coordinate-wise coupling.

 We then write
\begin{align*}
 & \int  \left\|\mu_{ f({\bf x},{\bf u},{\bf w})} - {\mu}_{{f}(\hat{\bf x}, \hat{\bf u} , \hat{\bf w}) }\right\| {\bf \Omega}(d{\bf w}, d\hat{\bf w}) \nonumber\\
&\leq \int \frac{1}{N}\sum_{i=1}^N \left| f(x^i,u^i,w^i, \mu^N) - {f}(\hat{x}^i,\hat{u}^i,\hat{w}^i,{\mu^N}) \right|  {\bf \Omega}(d{\bf w}, d\hat{\bf w})\nonumber \\
& =  \frac{1}{N}\sum_{i=1}^N \int  \left| f(x^i,u^i,w^i, \mu^N) - {f}(\hat{x}^i,\hat{u}^i,\hat{w}^i,{\mu^N}) \right|  {\bf \Omega}(d{\bf w}, d\hat{\bf w})\nonumber \\
&= \frac{1}{N}\sum_{i=1}^N \int  \left| f(x^i,u^i,w^i, \mu^N) - {f}(\hat{x}^i,\hat{u}^i,\hat{w}^i,{\mu}^N) \right|  { \Omega^i}(dw^i, d\hat{w}^i). 
\end{align*}

The analysis thus far, works for any coupling  ${\bf \Omega}(d{\bf w}, d\hat{\bf w})$. In particular, the analysis holds for the coupling that satisfies
\begin{align*}
\| \mathcal{T}(\cdot|x,u,\mu^N) - {\mathcal{T}}(\cdot|\hat{x}^i,\hat{u}^i,{\mu}^N)\| =  \int  \left| f(x^i,u^i,w^i, \mu^N) - {f}(\hat{x}^i,\hat{u}^i,\hat{w}^i,{\mu}^N) \right|  { \Omega^i}(dw^i, d\hat{w}^i).
\end{align*}
for every $i$ for some coordinate-wise coupling $\Omega^i(dw^i,d\hat{w}^i)$. We can then write 
\begin{align*}
 & \int  \left\|\mu_{ f({\bf x},{\bf u},{\bf w})} - {\mu}_{{f}(\hat{\bf x}, \hat{\bf u} , \hat{\bf w}) }\right\| {\bf \Omega}(d{\bf w}, d\hat{\bf w}) \nonumber\\
&\leq \frac{1}{N}\sum_{i=1}^N \int  \left| f(x^i,u^i,w^i, \mu^N) - {f}(\hat{x}^i,\hat{u}^i,\hat{w}^i,{\mu^N}) \right|  { \Omega^i}(dw^i, d\hat{w}^i)\\
& = \int \frac{1}{N}\sum_{i=1}^N \left\|  \mathcal{T}(\cdot|x,u,\mu^N) - {\mathcal{T}}(\cdot|\hat{x}^i,\hat{u}^i,{\mu^N})\right\|\\
&=\left\| \int  \mathcal{T}(\cdot|x,u,\mu^N) \mu_{\bf (x,u)}(du,dx) - \int{\mathcal{T}}(\cdot|x,u,{\mu^N}) \mu_{(\bf \hat{x} , \hat{u})}(du,dx)\right\|.
\end{align*}
We can then write that
\begin{align*}
& \int \left\|  F(\mu^N,\gamma^N) -\mu_{f(\bf x,u,w)}\right\| {\bf P}(d{\bf w}) \\
&\leq  E\left[  \int \left\|  F(\mu^N,\gamma^N) - \mu_{f ({\bf \hat{x},\hat{u},\hat{w}})}\right\|   +   \left\|   \mu_{f ({\bf \hat{x},\hat{u},\hat{w}})} -   \mu_{f(\bf x,u,w)}\right\| {\bf \Omega}(d{\bf w}, d{\bf \hat{w}})  \right]\\
&\leq M_N + E\left[\left\| \int  \mathcal{T}(\cdot|x,u,\mu^N) \mu_{\bf (x,u)}(du,dx) - \int{\mathcal{T}}(\cdot|x,u,{\mu^N}) \mu_{(\bf \hat{x} , \hat{u})}(du,dx)\right\|.\right]\\
& \leq M_N + E\left[\delta_T\left\|\mu_{\bf (\hat{x},\hat{u})} - \mu_{(\bf x,u)}\right\| \right] \leq M_N + \delta_T \bar{M}_N
\end{align*}
where in the last step we used the fact that $\mu_{\bf (\hat{x},\hat{u})}$ is an empirical measure for $\mu_{(\bf x,u)} = \Theta^N$.

We then conclude that for the term (\ref{bound_inf_big}):
\begin{align}\label{bound_inf_big1}
& K_\beta^{*}(\mu^N) - K_\beta^{N,*}(\mu^N) \nonumber\\
& \leq \beta \|K_\beta^*\|_{Lip} \int \left\| F(\mu^N,\gamma^N) - \mu_1^N\right\| \eta(d\mu_1^N|\mu^N,\Theta^N) + \beta \sup_{\mu\in \P_N(\mathds{X})} \left| K_\beta^*(\mu)-  K_\beta^{N,*}(\mu)\right|\nonumber\\
&\leq \beta \|K_\beta^*\|_{Lip} \left( M_N + \delta_T \bar{M}_N\right) +  \beta \sup_{\mu\in \P_N(\mathds{X})} \left| K_\beta^*(\mu)-  K_\beta^{N,*}(\mu)\right|
\end{align}

We now assume that $K_\beta^*(\mu^N) < K_\beta^{N,*}(\mu^N)$. To get an upper bound similar to (\ref{bound_inf_big}), for the finite population problem, we let agents to use the randomized policy $\gamma_\infty$ that is optimal for the infinite population problem, instead of choosing actions that achieves $\Theta^N$ which is the optimal selection for the $N$ population problem for the state distribution $\mu^N$.  Let ${\bf x}$ be such that $\mu_{\bf x}=\mu^N$, we introduce ${{\bf u }}= [{u}^1,\dots,{u}^N]$ where ${u}^i = \gamma_\infty(x^i,v^i)$ for some i.i.d. $v^i$.  Denoting by $\hat{\Theta}^N = \mu_{\bf (x,{u})}$, and  following the steps leading to (\ref{bound_inf_big}), we now write
\begin{align}\label{bound_fin_big}
& K_\beta^{N,*}(\mu^N) - K_\beta^{*}(\mu^N) \leq  \beta \int K_\beta^{N,*}(\mu_1^N)\eta(d\mu_1^N|\mu^N, \hat{\Theta}^N) - \beta K_\beta^*(F(\mu^N,\gamma_\infty))\nonumber\\
& \leq  \beta \int \left| K_\beta^{N,*}\left(\mu_1^N\right) - K_\beta^*(\mu_1^N)\right| \eta(d\mu_1^N|\mu^N,\hat{\Theta}^N)\nonumber\\
&\quad + \beta \int \left|  K_\beta^*(\mu_1^N) - K_\beta^{*}\left( F(\mu^N,\gamma_\infty)  \right)\right| \eta(d\mu_1^N|\mu^N,\hat{\Theta}^N)\nonumber\\
& \leq  \beta \sup_{\mu\in \P_N(\mathds{X})} \left| K_\beta^*(\mu)-  K_\beta^{N,*}(\mu)\right|+ \beta \|K_\beta^*\|_{Lip} \int \left\| F(\mu^N,\gamma_\infty) - \mu_1^N\right\| \eta(d\mu_1^N|\mu^N,\hat{\Theta}^N)
\end{align}
Following almost identical steps as the first case, one can show that
\begin{align*}
& \int \left\| F(\mu^N,\gamma_\infty) - \mu_1^N\right\| \eta(d\mu_1^N|\mu^N,\hat{\Theta}^N)\\
& \leq M_N + \delta_T  E\left[\left\|\mu_{\bf (\hat{x},\hat{u})} - \mu_{(\bf x,u)}\right\| \right] 
\end{align*} 
where $\hat{x}^i \sim \mu^N$, $\mu_{\bf x}=\mu^N$ and $u^i=\gamma_\infty(x^i,v^i)$, $\hat{u}^i = \gamma_\infty(\hat{x}^i,v^i)$, and the expectation above is with respect to the random selections of $\hat{x}^i$ and $v^i$. Note that $u^i$ and $\hat{u}^i$ uses the same randomization $v^i$, hence averaging over the distribution of $v^i$, we can write that
\begin{align*}
 E\left[\left\|\mu_{\bf (\hat{x},\hat{u})} - \mu_{(\bf x,u)}\right\| \right]  \leq& E \left[ \left\| \gamma_\infty(du|x)\mu_{\bf x}(dx) - \gamma_\infty(du|x)\mu_{\bf \hat{x}}(dx)\right\|\right]\\
&\leq E \left[  \|\mu_{\bf x} - \mu_{\bf \hat{x}}\| \right] \leq M_N.
\end{align*}
In particular, we can conclude that the bound (\ref{bound_fin_big}) can be concluded as:
\begin{align}\label{bound_fin_big1}
& K_\beta^{*}(\mu^N) - K_\beta^{N,*}(\mu^N)  \leq \beta \|K_\beta^*\|_{Lip}\left( M_N +  \delta_T M_N \right) +  \beta \sup_{\mu\in \P_N(\mathds{X})} \left| K_\beta^*(\mu)-  K_\beta^{N,*}(\mu)\right|.
\end{align}
Thus, noting that $M_N\leq \bar{M}_N$, and combining (\ref{bound_inf_big1}) and (\ref{bound_fin_big1}), we can write
\begin{align*}
& |K_\beta^{*}(\mu^N) - K_\beta^{N,*}(\mu^N) | \leq \beta \|K_\beta^*\|_{Lip}\left( \bar{M}_N +  \delta_T \bar{M}_N \right) +  \beta \sup_{\mu\in \P_N(\mathds{X})} \left| K_\beta^*(\mu)-  K_\beta^{N,*}(\mu)\right|.
\end{align*}
Rearranging the terms and taking the supremum on the left hand side over $\mu^N\in \P_N(\mathds{X})$, we can write
\begin{align*}
&\sup_{\mu\in \P_N(\mathds{X})} |K_\beta^{*}(\mu) - K_\beta^{N,*}(\mu) | \leq \frac{\beta \|K_\beta^*\|_{Lip} (1-\delta_T) \bar{M}_N}{1-\beta}
\end{align*}
which proves the result together with $\|K_\beta^*\|_{Lip}\leq \frac{C}{1-\beta K}$ and $\delta_T\leq1$.

\bibliographystyle{plain}

\bibliography{mfc_bibliography}

\end{document}